	\newtheorem{thm}{Theorem}[section]
	\newtheorem{cor}[thm]{Corollary}
	\newtheorem{defn}[thm]{Definition}
	\newtheorem{lem}[thm]{Lemma}
	\newtheorem{prop}[thm]{Proposition}
\newcommand{\HH}{\mathcal{H}}
\newcommand{\M}{\mathcal{M}}
\renewcommand{\P}{\mathcal{P}}
\newcommand{\QQ}{\mathcal{Q}}
\newcommand{\R}{\mathbb{R}}
\newcommand{\s}{\mathfrak{s}}
\renewcommand{\S}{\mathcal{S}}
\newcommand{\To}{\longrightarrow}
\newcommand{\Tr}{\mathfrak{T}}
\newcommand{\TT}{\mathcal{T}}
\newcommand{\x}{{\bf x}}
\newcommand{\Z}{\mathbb{Z}}
\DeclareMathOperator{\Conv}{Conv}
\DeclareMathOperator{\Spin}{Spin}
\DeclareMathOperator{\Supp}{Supp}
\DeclareMathOperator{\Tight}{Tight}
\begin{document}

\title{Polytopes, dualities, and Floer homology} 

\author{Daniel V. Mathews} 
\affil{School of Mathematical Sciences,
Monash University,
VIC 3800, Australia
\texttt{daniel.mathews@monash.edu}}

\maketitle

\begin{abstract}

This article is an exposition of a body of existing results, together with an announcement of recent results. We discuss a theory of polytopes associated to bipartite graphs and trinities, developed by K\'{a}lm\'{a}n, Postnikov and others. This theory exhibits a variety of interesting duality and triality relations, and extends into knot theory, 3-manifold topology and Floer homology. In recent joint work with K\'{a}lm\'{a}n, we extend this story into contact topology and contact invariants in sutured Floer homology.

\end{abstract}

\tableofcontents

\section{Introduction}

\subsection{The story}

In this article we give an exposition of a body of mathematical results which, we think, tells an interesting story. This story begins with bipartite graphs and ends with contact topology, passing through several distinct fields along the way, including planar graph theory, Tutte matchings, trees and arborescences, knots and links, 3-manifolds and Floer homology, among others. Running throughout this story is a common thread of underlying polytopes with duality and triality relations --- polytopes studied by by K\'{a}lm\'{a}n and Postnikov, among others \cite{Kalman13_Tutte, K-Murakami, K-Postnikov, Postnikov09}. And we will see that a particular number appears repeatedly throughout, in counting all these objects.

The results we discuss are the work of many authors, including Friedl, Juh\'{a}sz, K\'{a}lm\'{a}n, Murakami, Postnikov, Rasmussen, Tutte, and others. We refer to their work as we proceed. None of the results presented here are new, or our own, except in the final section, where we announce some recent results in joint work with K\'{a}lm\'{a}n \cite{Kalman-Mathews_Trinities}. These results add a new chapter to the story and connect it to 3-dimensional contact topology.

This story crosses several areas of mathematics, and is aimed at a general mathematical audience. We have therefore tried not to assume particular expertise in any of these fields, and have provided background all along the way, which may be skipped by readers knowledgeable in the respective fields. We provide proofs (or sketches thereof) for some of the results discussed, when we understand them and we think they add to the exposition. But in general we refer to original papers for full proofs and details.

Our discussion of this story is, so far as possible, accompanied by running examples. These examples are sufficiently simple that the associated structures can be calculated by hand and visualised in 3 dimensions. This makes the examples perhaps too simple for some purposes, but we hope that the advantages of accessibility and manageability outweigh the disadvantages of possible over-simplicity.

We hope this exposition helps to illustrate some interesting ideas, and to bring them to a wider audience. The story so far covers several fields of mathematics, but may yet cover even more.

\subsection{Plan of the paper}

Our approach in this paper is to tell the story ``from the ground up", starting from the beginning and incrementally adding layers of extra structure. The advantage of this approach is that it makes logical sense and allows the reader to build up their understanding step by step; the disadvantage is that the reader will not have an overview of the story until the very end. In an attempt to provide the reader with a vague overview in advance, we offer the following preview.

We begin in section \ref{sec:hypergraphs_polytope_dualities} by introducing some polytopes associated to bipartite graphs, using ideas from the theories of hypergraphs and spanning trees. We illustrate these polytopes with examples, and discuss some duality relationships between them.

In section \ref{sec:plane_graphs_dualities_trinities} we extend the story to \emph{planar} bipartite graphs, which adds an additional type of duality, namely planar duality. These considerations naturally give rise to \emph{trinities}, which contain \emph{triples} of plane bipartite graphs, and exhibit a relationship of \emph{triality}. These relationships go back to Tutte and his tree trinity theorem.

Having seen polytopes associated to bipartite graphs, and the triples of bipartite graphs that arise when they are embedded in the plane, in section \ref{sec:trinities} we consider and compare the polytopes associated to these triples of bipartite graphs. We find that the polytopes also satisfy duality and triality relationhips.

In section \ref{sec:knots_links} we extend the story to knots and links, which can be constructed from plane bipartite graphs by a procedure called the \emph{median construction}. As it turns out, this procedure is quite a general one, and many \emph{special alternating} links can be cosnstructed this way. The story of graphs, polytopes and trinities extends to the world of knots and links in several ways. For instance, one of the polytopes associated to a plane bipartite graph is closely related to the HOMFLY-PT polynomial of the associated link.

Next, in section \ref{sec:sutured} we press further into 3-dimensional topology, introducing \emph{sutured manifolds} into the story. A slight extension of the median construction produces not only a link but a sutured 3-manifold. We may then consider \emph{sutured Floer homology}, a powerful invariant of sutured manifolds based on pseudoholomorphic curves. This turns out to contain polytopes as well, and the three sutured Floer homology groups associated to a trinity exhibit similar triality relations.

Finally, in section \ref{sec:contact} we add the final (for now) ingredient to the story, \emph{contact geometry}, announcing our recent joint work. We discuss contact structures on sutured 3-manifolds. We find that the sets of tight contact structures on the three sutured manifolds associated to a trinity also obey triality relations, parallel to those between associated polytopes.

\subsection{Acknowledgments}

I would like to thank Tam\'{a}s K\'{a}lm\'{a}n for introducing me to this wonderful area of mathematics. This work was supported by Australian Research Council grant DP160103085.

\section{Hypergraphs and polytope dualities}
\label{sec:hypergraphs_polytope_dualities}

We begin by discussing some interesting constructions of polytopes associated to bipartite graphs, following Postnikov \cite{Postnikov09} and K\'{a}lm\'{a}n \cite{Kalman13_Tutte}. Our conventions partly follow both these references, and we refer to them for further details.

\subsection{Bipartite graphs and hypergraphs}
\label{sec:bipartite_hypergraphs}

Recall a graph $G$ is \emph{bipartite} if its vertices can be partitioned into two sets $U \sqcup V$ so that each edge connects a vertex of $U$ to a vertex of $V$. We call $U, V$ the \emph{vertex classes} of $G$.

A \emph{hypergraph} is a generalisation of a graph. We can think of a graph $G$ as consisting of a set of vertices $V$, and a set $E$ of edges, where each edge is a pair of vertices. A hypergraph generalises this notion of edge to \emph{hyperedge}, where a hyperedge is now a subset of vertices of arbitrary size.
\begin{defn}
A \emph{hypergraph} $\mathcal{H} = (V,E)$ consists of
\begin{itemize}
\item a finite set $V$ (the \emph{vertices}), and 
\item a finite multiset $E$ of non-empty subsets of $V$ (the \emph{hyperedges}).
\end{itemize}
\end{defn}
When each element of $E$ has cardinality $2$, a hypergraph reduces to a graph.

Hypergraphs and bipartite graphs are closely related. From a hypergraph $\mathcal{H} = (V,E)$ we can form a bipartite graph $G$ (without multiple edges) by taking 
$V$ and $E$ as the vertex classes, and connecting $v \in V$ to $e \in E$ by an edge of $G$ if and only if $v \in e$. 

Conversely, a bipartite graph $G$ with vertex classes $U, V$ 
can be viewed as a hypergraph $\HH$. We may take the vertices of $\HH$ as $V$. For each $u \in U$, let $e(u)$ be the set of vertices connected to $u$ by an edge. Then each $e(u) \subseteq V$ and we take the hyperedges of $\mathcal{H}$ to be 
$\{ e(u) \mid u \in U \}$.

In fact, reversing the roles of $U$ and $V$, the bipartite graph $G$ can be viewed as a hypergraph in two distinct ways. These two viewpoints are related by reversing the roles of vertices and hyperedges and these two hypergraphs are called \emph{abstract duals} of each other. We denote them $\mathcal{H}$ and $\overline{\mathcal{H}}$.

\subsection{Polytopes}

Recall a \emph{polytope} is the convex hull of a finite set of points in the standard Euclidean $\R^n$.

Throughout this section, let $G$ be a bipartite graph with vertex classes $U = \{1, 2, \ldots, m\}$ and $V = \{\bar{1}, \bar{2}, \ldots, \bar{n}\}$, so $|U| = m$ and $|V| = n$. Let $e(1), \ldots, e(m)$ be subsets of $V$ defined as above:
\[
e(u) = \{ v \; \mid \; (u, v) \text{ is an edge of } G \}.
\]
In other words, $e(u) \subseteq V = \{\bar{1}, \bar{2}, \ldots, \bar{n} \}$ consists of all vertices connected to $u \in U$ by an edge in $G$. Similarly, we can define $e(\bar{1}), \ldots, e(\bar{n})$ by $e(v) = \{ u \; \mid \; (u, v) \text{ is an edge of } G \} \subseteq U$. That is, $e(v)$ consists of everything connected to $v$ in $G$. Let $\mathcal{H}$ be the corresponding hypergraph, with vertices $V$, and hyperedges 
$\{ e(u) \; \mid \; u \in U \}$. The abstract dual hypergraph $\overline{\HH}$ has vertices $U$ and hyperedges 
$\{ e(v) \; \mid \; v \in V \}$.

\begin{defn} \
\begin{enumerate}
\item
The \emph{GP polytope} of $\HH$ is 
\[
\P_\HH = \Delta_{e(1)} + \Delta_{e(2)} + \cdots + \Delta_{e(m)} 
= \sum_{u \in U} \Delta_{e(u)} \subset \R^n = \R^V.
\]
\item
The \emph{trimmed GP polytope} of $\HH$ is
\[
\P_\HH^- = \P_\HH - \Delta_{V} = \left( \sum_{u \in U} \Delta_{e(u)} \right) - \Delta_V \subset \R^{n} = \R^V.
\]
\end{enumerate}
\end{defn}
Postnikov called $\P_\HH$ and $\P_\HH^-$ the \emph{generalised permutohedron} and \emph{trimmed generalised permutohedron} associated to the bipartite graph $G$ \cite[sec. 9]{Postnikov09}. K\'{a}lm\'{a}n called $\P_\HH^-$ the \emph{hypertree polytope}.  For our purposes we do not need permutohedra, or their specialisations or generalisations, and we introduce hypertrees later, so we use the abbreviated name ``GP". 

Some remarks on these definitions are in order. Throughout, we identify $\R^V$ with $\R^n$, with basis vectors ${\bf i}_{\bar{1}}, \ldots, {\bf i}_{\bar{n}}$ corresponding to the elements of $V$, and similarly $\R^U = \R^m$, with basis ${\bf i}_1, \ldots, {\bf i}_n$. For a subset $I \subseteq V = \{ \bar{1}, \ldots, \bar{n} \}$, we denote by $\Delta_I$ the convex hull of $\{ {\bf i}_v \; \mid \; v \in I\}$ in $\R^V = \R^n$. Thus each $\Delta_I$ is a standard simplex in the appropriate coordinates, and $\Delta_V$ is the convex hull of all of ${\bf i}_{\bar{1}}, \ldots, {\bf i}_{\bar{n}}$.
In the definition of $\P_\HH$ the additions are \emph{Minkowski sums}, and in the definition of $\P_\HH^-$ the subtraction is a \emph{Minkowski difference}. For two sets $A,B \subseteq \R^n$, their Minkowski sum is
\[
A + B = \{ a + b \; \mid \; a \in A, \; b \in B \},
\]
and their Minkowski difference is
\[
A - B = \{ x \in \R^n \; \mid \; x + B \subseteq A \}.
\]
In particular, if $A$ and $B$ are polytopes, then $(A+B)-B = A$. 

Thus, the GP polytope $\P_\HH$ is given by taking Minkowski sums of simplices in $\R^V$ corresponding to hyperedges of $\HH$, and the trimmed version is then obtained by subtracting a full-dimensional simplex. 

Taking the abstract dual $\overline{\HH}$, with vertices $U$ and hyperedges
$\{ e(v) \; \mid \; v \in V \}$,
the corresponding polytopes are
\begin{align*}
\P_{\overline{\HH}} &= \Delta_{e({\bar{1}})} + \cdots + \Delta_{e({\bar{n}})} = \sum_{v \in V} \Delta_{e(v)} \subset 
\R^U \\
\P_{\overline{\HH}}^- &= \P_{\overline{\HH}} - \Delta_U = \left( \sum_{v \in V} \Delta_{e(v)} \right) - \Delta_U \subset 
\R^U
\end{align*}

We also consider polytopes based on \emph{subgraphs} and their degrees in the bipartite graph $G$; in particular we consider \emph{spanning trees}, of $G$.  For a subgraph $T$ of $G$, define the \emph{$U$-degree} vector $\deg_U T \in \Z_{\geq 0}^U$ as the vector of degrees of $T$ at the vertices $1, \ldots, m$ of $U$, and similarly the \emph{$V$-degree vector}:
\[
\deg_U T = \left( \deg_1 T, \ldots, \deg_m T \right) \in 
\Z_{\geq 0}^U, \quad
\deg_V T = \left( \deg_{\bar{1}} T, \ldots, \deg_{\bar{n}} T \right) \in 
\Z_{\geq 0}^V
\]
Further, for $u \in U$, let $T(u) = \{ v \in V \; \mid \; (u,v) \text{ is an edge of } T \}$, so $T(u) \subseteq e(u) \subseteq V$; that is, $T(u)$ consists of the vertices connected to $u$ by an edge in $T$. Note $T$ is uniquely specified by the sets $T(u)$, for $u \in U$; indeed, $T$ corresponds to a sub-hypergraph $\TT$ of $\HH$ with vertices $V$ and hyperedges $\{ T(u) \; \mid \; u \in U \}$. Similarly, $T$ is determined by $T(v) = \{u \in U \; \mid \; (u,v) \text{ is an edge of } T \} \subseteq e(v)$ over $v \in V$, which corresponds to a sub-hypergraph $\overline{\TT}$ of $\overline{\HH}$ with vertices $U$ and hyperedges $\{ T(v) \; \mid \; v \in V \}$.  

Regarding a subgraph $T$ of $G$ as a sub-hypergraph $\TT$ of $\HH$, we have the GP polytope
\[
\P_\TT = \Delta_{T(1)} + \Delta_{T(2)} + \cdots + \Delta_{T(m)}
= \sum_{u \in U} \Delta_{T(u)} \subset 
\R^V.
\]
Since each $T(u) \subseteq e(u)$, we have $\Delta_{T(u)} \subseteq \Delta_{e(u)}$ for each $u \in U$. Thus, when all $T(u)$ are nonempty, $\P_\TT \subseteq \P_\HH \subset \R^V$. In particular, this is the case when $T$ is a spanning tree. (However if $T$ has an isolated vertex in $U$, but $G$ does not, then $\Delta_{T(u)}$ will be empty, but $\Delta_{e(u)}$ will not, and $\P_\TT$ will not be contained in $\P_\HH$.) Similar considerations apply to $\P_{\overline{\TT}}$ and $\P_{\overline{\HH}}$. 

When $T$ is a spanning tree of $G$, then we will also say $\TT$ is a spanning tree of $\HH$, and $\overline{\TT}$ is a spanning tree of $\overline{\HH}$. In this case, $T$ has degree at least $1$ at every vertex; following \cite{Juhasz-Kalman-Rasmussen12, Kalman13_Tutte, Kalman-Mathews_Trinities}, we make the following definition.
\begin{defn} \
\label{def:hypertree}
\begin{enumerate}
\item
The \emph{hypertree} of $\TT$ is the vector
\[
f_\TT = \deg_U T - (1, \ldots, 1) \in \Z_{\geq 0}^U.
\]
\item
The \emph{hypertree} of $\overline{\TT}$ is the vector
\[
f_{\overline{\TT}} = \deg_V T - (1, \ldots, 1) \in \Z_{\geq 0}^V.
\]
\end{enumerate}
\end{defn}
Thus for a spanning tree $\TT$ of $\HH$, the hypertree of $\TT$ is the vector whose coordinates describe how many elements are selected from each hyperedge of $\HH$ (minus $1$).

We can form a polytope out of hypertrees as follows.
\begin{defn}
The \emph{hypertree polytope} of $\HH$ is
\[
\S_\HH = \Conv \left\{ f_\TT \; \mid \; \TT \text{ a spanning tree of } \HH \right\} \subset 
\R^U.
\]
\end{defn}
Here $\Conv (A)$ denotes the convex hull of $A$.
Similarly, the hypertree polytope of $\overline{\HH}$ is 
\[
\S_{\overline{\HH}}= \Conv \left\{ f_{\overline{\TT}} \; \mid \; \overline{\TT} \text{ a spanning tree of } \overline{\HH} \right\} \subset \R^V.
\]

So far all polytopes lie in $\R^U$ or $\R^V$. Our final polytope lies in the direct sum $\R^U \oplus \R^V$.
\begin{defn}
The \emph{root polytope} of $G$ is
\[
\QQ_G = \Conv \left\{ {\bf i}_u - {\bf i}_v \mid u \in U, \; v \in V, \; (u, v) \text{ is an edge of } G \right\} \subset \R^{m+n} = \R^U \oplus \R^V.
\]
\end{defn}
Note that the set of points ${\bf i}_u - {\bf i}_v$ with $u,v$ connected by an edge can alternatively be written as $\bigcup_{u \in U} \{u\} \times (-e(u))$ or $\bigcup_{v \in V} e(v) \times \{-v\}$. These points are effectively a plot of incidence relations in $\HH$ or $\overline{\HH}$, and the root polytope is their convex hull. 

We observe that if $T$ is a subgraph of $G$, then $\QQ_T \subseteq \QQ_G$, being the convex hull of a smaller collection of points.

Throughout, we use cursive letters to denote polytopes. We will also be interested in the integer points of these polytopes, which we denote by roman letters:
\[
P_\HH = \P_\HH \cap \Z^V, \quad
P_\HH^- = \P_\HH^- \cap \Z^V, \quad
S_\HH = \S_\HH \cap \Z^V, \quad
Q_G = \QQ_G \cap \Z^V, \quad \text{etc.}
\]

\subsection{An example}
\label{sec:polytope_example}

We now proceed through the calculations of the above polytopes for a specific example. The example is simple, and the calculations may sometimes appear tedious, but significantly many coincidences and patterns arise that we believe that the reader will find themselves rewarded for the effort.

Let $G$ be the bipartite graph $G$ shown in figure \ref{fig:graph_G}. We have $m=2$ and $n=3$, with vertices of $U$ drawn in green, and vertices of $V$ drawn in blue. Let $\HH$ and $\overline{\HH}$ be the corresponding abstract dual hypergraphs.

\begin{figure}
\begin{center}
\begin{tikzpicture}[scale = 1.5]
\coordinate (u1) at (0,0);
\coordinate (u2) at (0,-2);
\coordinate (v1) at (0,1);
\coordinate (v2) at (1,-1);
\coordinate (v3) at (-1,-1);

\draw [ultra thick, red] (u1) -- (v1);
\draw [ultra thick, red] (u1) -- (v2);
\draw [ultra thick, red] (u1) -- (v3);
\draw [ultra thick, red] (u2) -- (v2);
\draw [ultra thick, red] (u2) -- (v3);

\foreach \x/\word in {(u1)/{$1$}, (u2)/{$2$}}
{
\draw [green!50!black, ultra thick, fill=white] \x circle  (8pt);
\draw \x node {$\word$};
}

\foreach \x/\word in {(v1)/{$\overline{1}$}, (v2)/{$\overline{2}$}, (v3)/{$\overline{3}$}}
{
\draw [blue, ultra thick, fill=white] \x circle  (8pt);
\draw \x node {\word};
}
\end{tikzpicture}
\end{center}
\caption{The bipartite graph $G$.}
\label{fig:graph_G}
\end{figure}
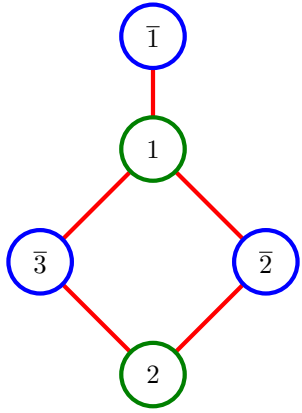

We first examine $\HH$. Its hyperedges are $e(1) = \{ \bar{1}, \bar{2}, \bar{3} \}$, $e(2)= \{ \bar{2}, \bar{3} \}$, and its GP polytope is
\begin{align*}
\P_\HH &= \Delta_{e(1)} + \Delta_{e(2)} = \Delta_{123} + \Delta_{23} 
= \Conv \{ {\bf i}_{\bar{1}}, {\bf i}_{\bar{2}}, {\bf i}_{\bar{3}} \} + \Conv \{ {\bf i}_{\bar{2}}, {\bf i}_{\bar{3}} \} \\
&= \Conv \{ {\bf i}_{\bar{1}} + {\bf i}_{\bar{2}}, {\bf i}_{\bar{1}} + {\bf i}_{\bar{3}}, 2{\bf i}_{\bar{2}}, 2{\bf i}_{\bar{3}} \} \\
&= \Conv \{ (1,1,0), (1,0,1), (0,2,0), (0,0,2) \} \subset \R^3.
\end{align*}
Here notation like $\Delta_{123}$ is shorthand for $\Delta_{\{\bar{1}, \bar{2}, \bar{3}\}}$.
(One may wonder why ${\bf i}_2 + {\bf i}_3 = (0,1,1)$ does not appear in the two lines above; this is because it is halfway from $(0,2,0)$ to $(0,0,2)$.) This is a quadrilateral lying in the plane $x+y+z = 2$ in $\R^3$. See figure \ref{fig:polytope_1}. It contains five integer points:
\[
P_\HH = \{ (1,1,0), (1,0,1), (0,2,0), (0,1,1), (0,0,2) \}.
\]
The trimmed GP polytope is then
\begin{align*}
\P_\HH^- 
= (\Delta_{123} + \Delta_{23}) - \Delta_{123} 
= \Delta_{23} 
= \Conv \{ (0,1,0), (0,0,1) \} \subset \R^3,
\end{align*}
which is just the interval from $(0,1,0)$ to $(0,0,1)$, so its integer points are $P^-_\HH = \{ (0,1,0), (0,0,1) \}$.

\begin{figure}
\begin{center}
\includegraphics[scale=0.7]{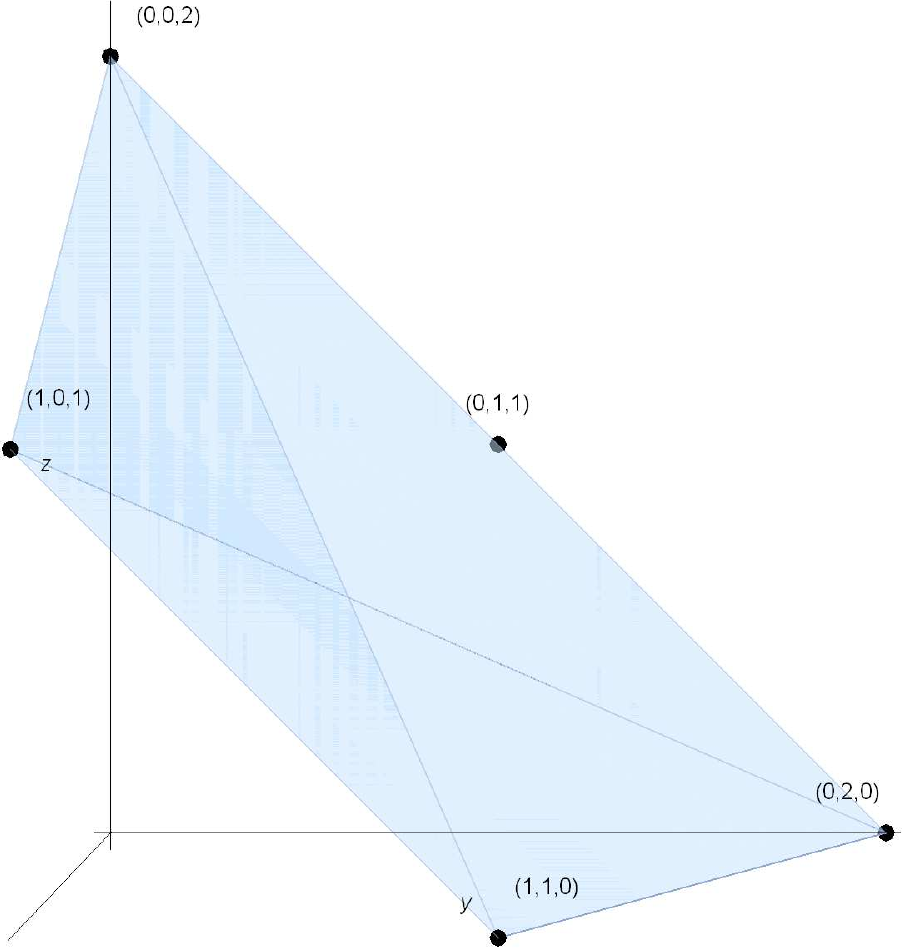}
\begin{tikzpicture}[scale = 1]
\coordinate [label=left:{$(1,1,0)$}] (110) at (0,0);
\coordinate [label=left:{$(1,0,1)$}] (101) at (0,2);
\coordinate [label=right:{$(0,2,0)$}] (020) at (2,-1);
\coordinate [label=right:{$(0,1,1)$}] (011) at (2,1);
\coordinate [label=right:{$(0,0,2)$}] (002) at (2,3);
\draw [ultra thick] (110) -- (101) -- (002) -- (020) -- cycle;
\end{tikzpicture}
\end{center}
\caption{The polytope $\P_\HH$, a 2-dimensional polytope (quadrilateral) in $\R^3$.}
\label{fig:polytope_1}
\end{figure}

The spanning trees of $G$ are, in an obvious notation
\[
T_1 = \{ 1\bar{1}, 1\bar{2}, 1\bar{3}, 2\bar{2} \}, \quad
T_2 = \{ 1\bar{1}, 1\bar{2}, 1\bar{3}, 2\bar{3} \}, \quad
T_3 = \{ 1\bar{1}, 1\bar{2}, 2\bar{2}, 2\bar{3} \}, \quad
T_4 = \{ 1\bar{1}, 1\bar{3}, 2\bar{2}, 2\bar{3} \}.
\]
as shown in figure \ref{fig:spanning_trees_in_G}. The GP polytopes of the corresponding spanning trees $\TT_1, \ldots, \TT_4$ of $\HH$ are 
\begin{align*}
\P_{\TT_1} &= 
\Delta_{123} + \Delta_2 
= \Conv \{ (1,1,0), (0,2,0), (0,1,1) \} \\
\P_{\TT_2} &= \Delta_{123} + \Delta_3 = \Conv \{ (1,0,1), (0,1,1), (0,0,2) \} \\
\P_{\TT_3} &= \Delta_{12} + \Delta_{23} 
= \Conv \{ (1,1,0), (1,0,1), (0,2,0), (0,1,1) \} \\
\P_{\TT_4} &= \Delta_{13} + \Delta_{23} = \Conv \{ (1,1,0), (1,0,1), (0,1,1), (0,0,2) \}.
\end{align*}
These are all 2-dimensional polytopes in the plane $x+y+z = 2$ in $\R^3$, all triangles or parallelograms, containing 3 or 4 integer points respectively. They are all subsets of $\P_\HH$; We draw them in figure \ref{fig:tree_triangulation}. We observe, in fact, that $\P_\HH$ \emph{decomposes} nicely as a union of these tree polytopes in \emph{two different ways}:
\[
\P_\HH = \P_{\TT_2} \cup \P_{\TT_3} = \P_{\TT_1} \cup \P_{\TT_4}.
\]

The spanning trees have $U$-degrees
\[
\deg_U T_1 = (3,1), \quad
\deg_U T_2 = (3,1), \quad
\deg_U T_3 = (2,2), \quad
\deg_U T_4 = (2,2)
\]
hence hypertrees
\[
f_{\TT_1} = (2,0), \quad
f_{\TT_2} = (2,0), \quad
f_{\TT_3} = (1,1), \quad
f_{\TT_4} = (1,1)
\]
so the hypertree polytope is given by the interval
\[
\S_\HH = \Conv \{ (2,0), (1,1) \} \subset \R^2,
\quad \text{with integer points} \quad
S_\HH = \{ (2,0), (1,1) \}.
\]
Note that just as $T_1$ and $T_4$ have GP polytopes which cover $\P_\HH$, their hypertrees cover $S_\HH$; similarly for $T_2$ and $T_3$.

\begin{figure}
\begin{center}
\begin{tikzpicture}[scale = 1.4]
\coordinate (u1) at (0,0);
\coordinate (u2) at (0,-2);
\coordinate (v1) at (0,1);
\coordinate (v2) at (1,-1);
\coordinate (v3) at (-1,-1);

\draw [ultra thick, red] (u1) -- (v1);
\draw [ultra thick, red] (u1) -- (v2);
\draw [ultra thick, red] (u1) -- (v3);
\draw [ultra thick, red] (u2) -- (v2);

\foreach \x/\word in {(u1)/{$1$}, (u2)/{$2$}}
{
\draw [green!50!black, ultra thick, fill=white] \x circle  (8pt);
\draw \x node {$\word$};
}

\foreach \x/\word in {(v1)/{$\overline{1}$}, (v2)/{$\overline{2}$}, (v3)/{$\overline{3}$}}
{
\draw [blue, ultra thick, fill=white] \x circle  (8pt);
\draw \x node {\word};
}

\draw (0,-3) node {$T_1$};
\end{tikzpicture}
\begin{tikzpicture}[scale = 1.4]
\coordinate (u1) at (0,0);
\coordinate (u2) at (0,-2);
\coordinate (v1) at (0,1);
\coordinate (v2) at (1,-1);
\coordinate (v3) at (-1,-1);

\draw [ultra thick, red] (u1) -- (v1);
\draw [ultra thick, red] (u1) -- (v2);
\draw [ultra thick, red] (u1) -- (v3);
\draw [ultra thick, red] (u2) -- (v3);

\foreach \x/\word in {(u1)/{$1$}, (u2)/{$2$}}
{
\draw [green!50!black, ultra thick, fill=white] \x circle  (8pt);
\draw \x node {$\word$};
}

\foreach \x/\word in {(v1)/{$\overline{1}$}, (v2)/{$\overline{2}$}, (v3)/{$\overline{3}$}}
{
\draw [blue, ultra thick, fill=white] \x circle  (8pt);
\draw \x node {\word};
}

\draw (0,-3) node {$T_2$};
\end{tikzpicture}
\begin{tikzpicture}[scale = 1.4]
\coordinate (u1) at (0,0);
\coordinate (u2) at (0,-2);
\coordinate (v1) at (0,1);
\coordinate (v2) at (1,-1);
\coordinate (v3) at (-1,-1);

\draw [ultra thick, red] (u1) -- (v1);
\draw [ultra thick, red] (u1) -- (v2);
\draw [ultra thick, red] (u2) -- (v2);
\draw [ultra thick, red] (u2) -- (v3);

\foreach \x/\word in {(u1)/{$1$}, (u2)/{$2$}}
{
\draw [green!50!black, ultra thick, fill=white] \x circle  (8pt);
\draw \x node {$\word$};
}

\foreach \x/\word in {(v1)/{$\overline{1}$}, (v2)/{$\overline{2}$}, (v3)/{$\overline{3}$}}
{
\draw [blue, ultra thick, fill=white] \x circle  (8pt);
\draw \x node {\word};
}

\draw (0,-3) node {$T_3$};
\end{tikzpicture}
\begin{tikzpicture}[scale = 1.4]
\coordinate (u1) at (0,0);
\coordinate (u2) at (0,-2);
\coordinate (v1) at (0,1);
\coordinate (v2) at (1,-1);
\coordinate (v3) at (-1,-1);

\draw [ultra thick, red] (u1) -- (v1);
\draw [ultra thick, red] (u1) -- (v3);
\draw [ultra thick, red] (u2) -- (v2);
\draw [ultra thick, red] (u2) -- (v3);

\foreach \x/\word in {(u1)/{$1$}, (u2)/{$2$}}
{
\draw [green!50!black, ultra thick, fill=white] \x circle  (8pt);
\draw \x node {$\word$};
}

\foreach \x/\word in {(v1)/{$\overline{1}$}, (v2)/{$\overline{2}$}, (v3)/{$\overline{3}$}}
{
\draw [blue, ultra thick, fill=white] \x circle  (8pt);
\draw \x node {\word};
}

\draw (0,-3) node {$T_4$};
\end{tikzpicture}
\end{center}
\caption{Spanning trees in $G$.}
\label{fig:spanning_trees_in_G}
\end{figure}
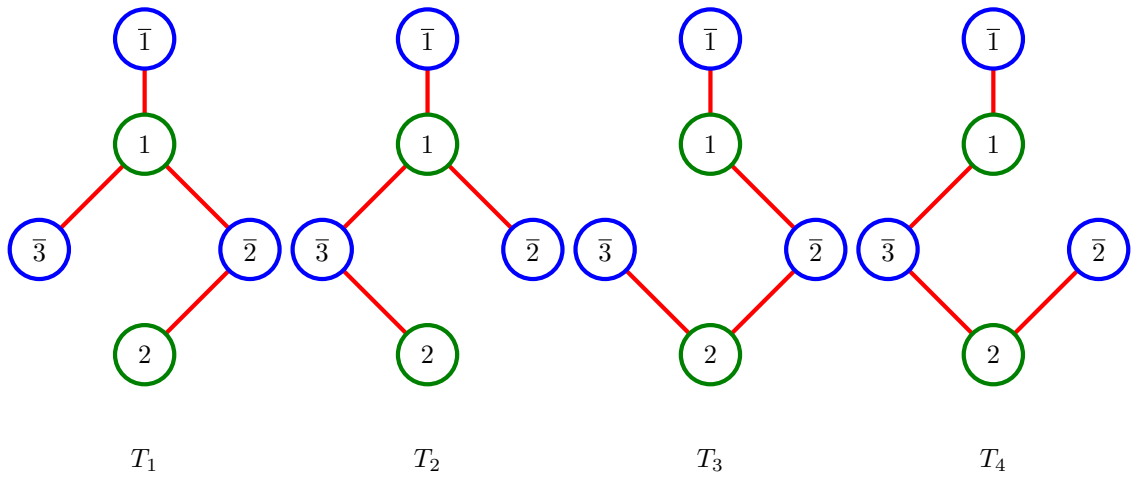

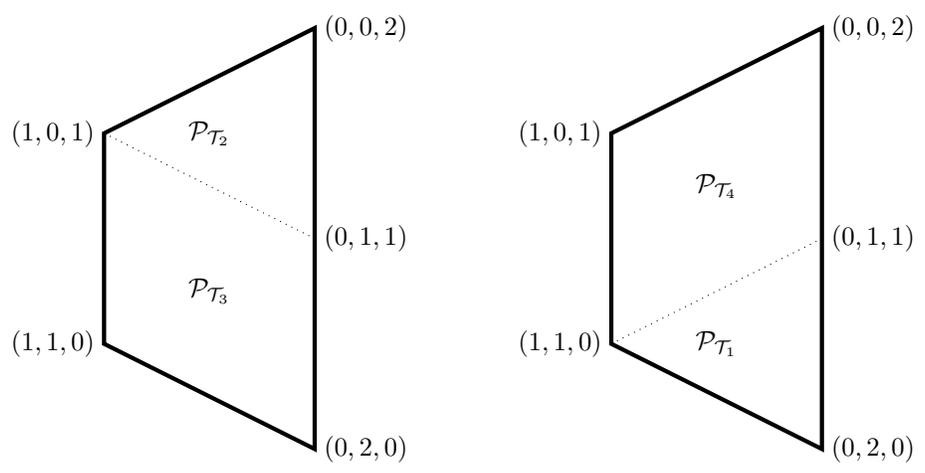
\begin{figure}
\begin{center}
\begin{tikzpicture}[scale = 1.4]
\coordinate [label=left:{$(1,1,0)$}] (110) at (0,0);
\coordinate [label=left:{$(1,0,1)$}] (101) at (0,2);
\coordinate [label=right:{$(0,2,0)$}] (020) at (2,-1);
\coordinate [label=right:{$(0,1,1)$}] (011) at (2,1);
\coordinate [label=right:{$(0,0,2)$}] (002) at (2,3);
\draw [ultra thick] (110) -- (101) -- (002) -- (020) -- cycle;
\draw [dotted] (101) -- (011);
\draw (1,2) node {$\P_{\TT_2}$};
\draw (1,0.5) node {$\P_{\TT_3}$};
\end{tikzpicture}
\hspace{1 cm}
\begin{tikzpicture}[scale = 1.4]
\coordinate [label=left:{$(1,1,0)$}] (110) at (0,0);
\coordinate [label=left:{$(1,0,1)$}] (101) at (0,2);
\coordinate [label=right:{$(0,2,0)$}] (020) at (2,-1);
\coordinate [label=right:{$(0,1,1)$}] (011) at (2,1);
\coordinate [label=right:{$(0,0,2)$}] (002) at (2,3);
\draw [ultra thick] (110) -- (101) -- (002) -- (020) -- cycle;
\draw [dotted] (110) -- (011);
\draw (1,1.5) node {$\P_{\TT_4}$};
\draw (1,0) node {$\P_{\TT_1}$};
\end{tikzpicture}
\end{center}
\caption{Tree polytopes for spanning trees.}
\label{fig:tree_triangulation}
\end{figure}

Let us now turn to the abstract dual $\overline{\HH}$. We have $e(\bar{1}) = \{1\}$, $e(\bar{2}) = (1,2)$ and $e(\bar{3}) = (1,2)$, so
\[
\P_{\overline{\HH}} = \Delta_1 + 2\Delta_{12}
= \Conv\{ (3,0), (1,2) \} \subset \R^2.
\]
This is an interval in $\R^2$ containing three integer points
\[
P_{\overline{\HH}} = \{ (3,0), (2,1), (1,2) \}.
\]
The trimmed GP polytope is
\[
\P^-_{\overline{\HH}} = \Delta_1 + \Delta_{12} = \Conv \{ (2,0), (1,1) \}
\quad \text{so} \quad
P^-_{\overline{\HH}} = \{ (2,0), (1,1) \}.
\]
We have seen this polytope before: it is also the hypertree polytope of $\HH$!
\[
\P^-_{\overline{\HH}} = \S_\HH.
\]
Next we can compute GP polytopes of spanning trees $\overline{\TT_1}, \ldots, \overline{\TT_4}$ of $\overline{\HH}$:
\begin{align*}
\P_{\overline{\TT_1}} &= \Delta_1 + \Delta_{12} + \Delta_1 = \Conv \{ (3,0), (2,1) \} \\
\P_{\overline{\TT_2}} &= \Delta_1 + \Delta_1 + \Delta_{12} = \Conv \{ (3,0), (2,1) \} \\
\P_{\overline{\TT_3}} &= \Delta_1 + \Delta_{12} + \Delta_2 = \Conv  \{ (2,1), (1,2) \} \\
\P_{\overline{\TT_4}} &= \Delta_1 + \Delta_2 + \Delta_{12} = \Conv \{ (2,1), (1,2) \}
\end{align*}
Unlike for $\HH$, we note that some of the tree polytopes coincide: $\P^{T_1}_{\overline{\HH}} = \P^{T_2}_{\overline{\HH}}$ and $\P^{T_3}_{\overline{\HH}} = \P^{T_4}_{\overline{\HH}}$. However, we do again find that the GP polytope $\P_{\overline{\HH}}$ decomposes into GP polytopes of spanning trees in a similar fashion as $\overline{\HH}$:
\[
\P_{\overline{\HH}} = \P_{\overline{\TT_1}} \cup \P_{\overline{\TT_4}}
= \P_{\overline{\TT_2}} \cup \P_{\overline{\TT_3}}.
\]
The spanning trees have 
hypertrees
\[
f_{\overline{\TT_1}} = (0,1,0), \quad
f_{\overline{\TT_2}} = (0,0,1), \quad
f_{\overline{\TT_3}} = (0,1,0), \quad
f_{\overline{\TT_4}} = (0,0,1)
\]
so the hypertree polytope is
\[
\S_{\overline{\HH}} = \Conv \left\{ (0,1,0), \; (0,0,1) \right\} \subset \R^3,
\quad \text{with integer points} \quad
S_{\overline{\HH}} = \{ (0,1,0), (0,0,1) \}.
\]
This polytope is again familiar: it is the trimmed GP polytope of $\HH$!
\[
\P^-_\HH = \S_{\overline{\HH}}.
\]
Yet again the hypertrees of $T_1$ and $T_4$ cover $S_{\overline{\HH}}$, as do the hypertrees of $T_2$ and $T_3$. 

We observe that both $\HH$ and $\overline{\HH}$ have precisely two hypertrees, and these are precisely the lattice points in the respective hypertree polytopes, so 
\[
| S_\HH | = | S_{\overline{\HH}} |.
\]

An observation which will be useful in the sequel is to compare the hypertrees $f_{\overline{\TT_1}}, f_{\overline{\TT_3}}$ and polytopes $\P_{\TT_1}, \P_{\TT_3}$.  Note that $\deg_V T_1 = \deg_V T_3 = (1,2,1)$ and $f_{\overline{\TT_1}} = f_{\overline{\TT_3}} = (0,1,0)$. Indeed we can obtain $T_1$ from $T_3$ by removing the edge $2\bar{3}$ and replacing it with the edge $1\bar{3}$; as both edges involve $\bar{3} \in V$ the $V$-degree does not change.

This fact is related to the fact that the GP polytopes of $\TT_1$ and $\TT_3$ (NB: \emph{not} $\overline{\TT}_1, \overline{\TT}_3$) are \emph{nested} one inside the other: $\P_{\TT_1} \subset \P_{\TT_3}$ (see figure \ref{fig:tree_triangulation}). Indeed, in replacing the edge $2\bar{3}$ with $1\bar{3}$, we remove $\bar{3}$ from $T(2)$ and add $\bar{3}$ to $T(1)$, so the subgraph polytopes change from
\[
\P_{\TT_3} = \Delta_{12} + \Delta_{23} 
\quad \text{to} \quad
\Delta_{123} + \Delta_2 = \P_{\TT_1}.
\]
Now $\Delta_{12} + \Delta_{23} \supset \Delta_{123} + \Delta_2$, which is an instance of the general fact that $\Delta_X + \Delta_Y \supseteq \Delta_{X \cup Y} + \Delta_{X \cap Y}$ (an easy exercise).

Finally, let us turn to the root polytope of $G$, which is given by
\begin{align*}
\QQ_G &= \Conv \{ {\bf i}_1 - {\bf i}_{\bar{1}}, {\bf i}_1 - {\bf i}_{\bar{2}}, {\bf i}_1 - {\bf i}_{\bar{3}}, {\bf i}_2 - {\bf i}_{\bar{2}}, {\bf i}_2 - {\bf i}_{\bar{3}} \} \\
&= \Conv \{ (1,0;-1,0,0), (1,0;0,-1,0), (1,0;0,0,-1), (0,1;0,-1,0), (0,1;0,0,-1) \} \subset \R^2 \oplus \R^3.
\end{align*}
Although $\QQ_G$ lives in $\R^5$, it is 3-dimensional. In order to visualise it, we make use of a linear projection $\pi: \R^5 \To \R^3$ which is injective on $\QQ_G$. Note $\QQ_G$ lies in the affine subspace of $\R^5$ given by $x_1 + x_2 = 1$ and $x_3 + x_4 + x_5 = -1$ (where $x_1, \ldots, x_5$ are coordinates on $\R^5$). If we introduce new coordinates $(x,y,z)$ given by $x = x_3 - x_4$, $y = x_4 - x_5$ and $z = x_1 - x_2$ then we can define $\pi$ by $(x_1, x_2, x_3, x_4, x_5) \mapsto (x,y,z)$, and obtain
\[
\pi(\QQ_G) = \Conv \{ (-1,0,1), (1,-1,1), (0,1,1), (1,-1,-1), (0,1,-1) \}
\]
which is the 3-dimensional polytope shown in figure \ref{fig:polytope_2}. Four of the vertices lie in the plane $2x+y=1$ and form a rectangle; together with the vertex $(-1,0,1)$ we see $\QQ_G$ is a rectangular pyramid.

\begin{figure}
\begin{center}
\includegraphics[scale=0.5]{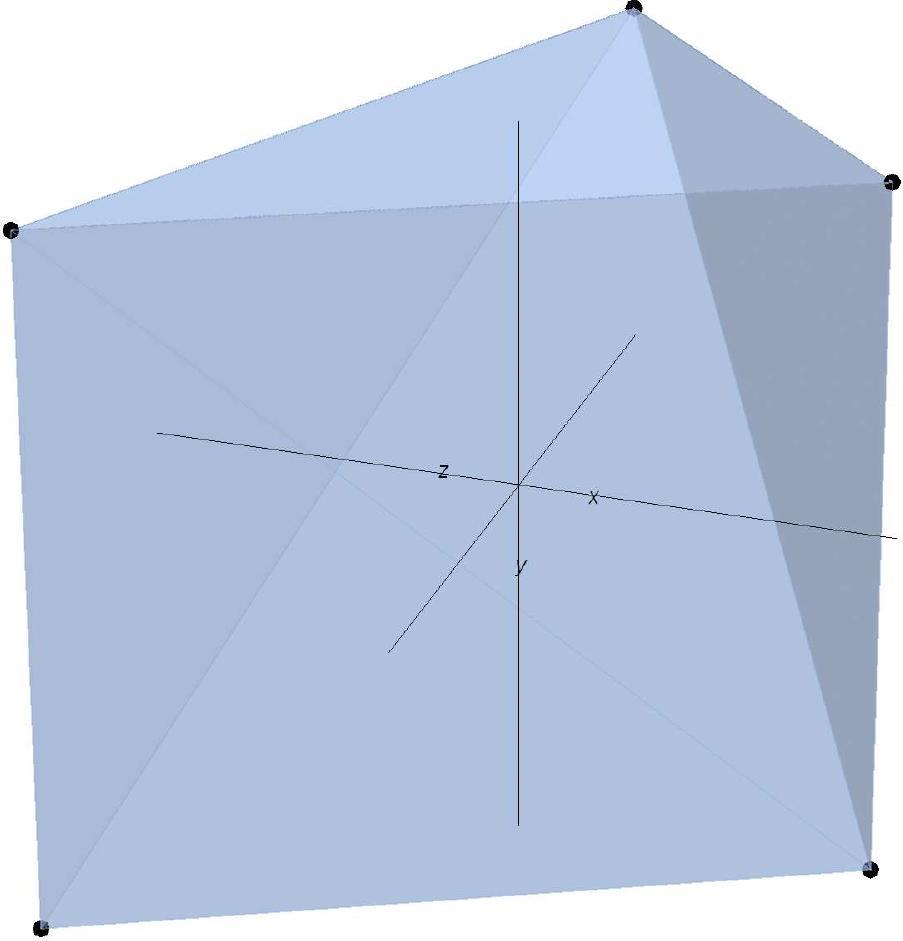}
\hspace{1 cm}
\includegraphics[scale=0.6]{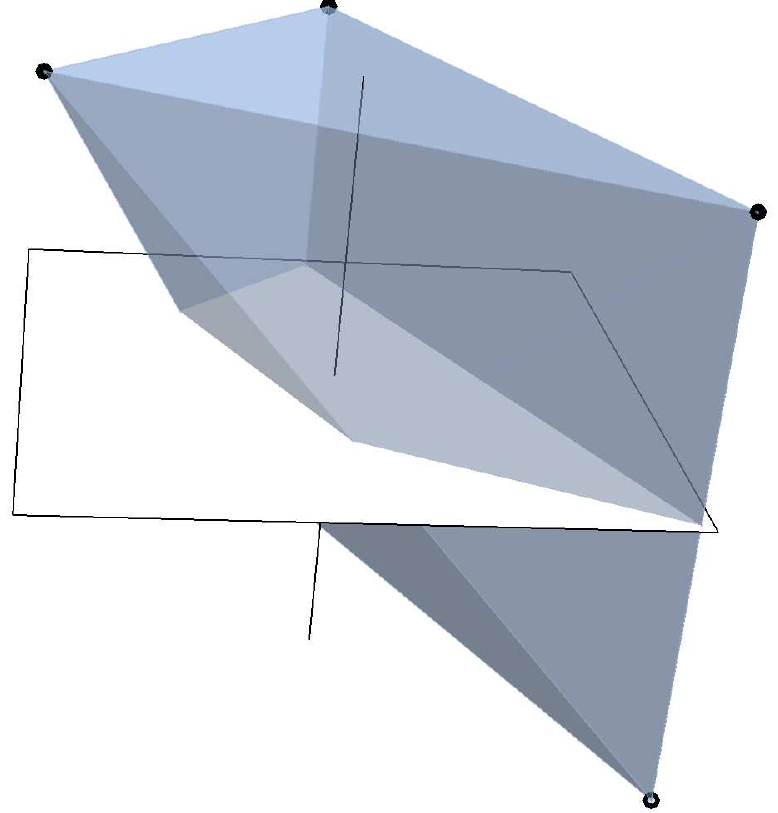}
\end{center}
\caption{The polytope $\QQ_G$, a 3-dimensional polytope in $\R^5$, projected to $\R^3$ by $\pi$. It is an irregular rectangular pyramid. Left: the base of the pyramid is the rectangle depicted at front. Right: the slice of $\QQ_G$ with $x_1 = x_2 = 1/2$ is a quadrilateral equivalent to $\P_\HH$.}
\label{fig:polytope_and_plane}
\label{fig:polytope_2}
\end{figure}

Now if we intersect $\QQ_G$ with $x_1 = x_2 = 1/2$ (corresponding to $z=0$ under projection by $\pi$), we obtain a quadrilateral as shown in figure \ref{fig:polytope_and_plane} (right). Its vertices in $\R^5 = \R^2 \times \R^3$ are
\begin{gather*}
\left( \frac{1}{2}, \frac{1}{2} \right) \times \left\{ 
(-\frac{1}{2},-\frac{1}{2},0), (-\frac{1}{2},0,-\frac{1}{2}), (0,0,-1), (0,-1,0) \right\} \\
= \left( \frac{1}{2}, \frac{1}{2} \right) \times -\frac{1}{2} 
\Bigg\{ (1,1,0), (1,0,1), (0,0,2), (0,2,0) \Bigg\}
\end{gather*}
Thus, the $(x_1, x_2)=(1/2, 1/2)$ slice of $\QQ_G$ is precisely $-1/2$ times the GP polytope of $\HH$:
\[
\QQ_G \cap \left( \left\{ \frac{1}{2}, \frac{1}{2} \right\} \times \R^3 \right) 
= \left\{ \frac{1}{2}, \frac{1}{2} \right\} \times -\frac{1}{2} \P_\HH.
\]

Similarly, if we intersect $\QQ_G$ with $x_3 = x_4 = x_5 = -1/3$ (corresponding to the $z$-axis in $\R^3$), we obtain an interval with endpoints $\frac{1}{3} \{ (3,0), (1,2) \} \times (-1/3, -1/3, -1/3)$, so $\P_{\overline{\HH}}$ is also a slice of $\QQ_G$!
\[
\QQ_G \cap \left( \R^2 \times \left\{ -\frac{1}{3}, - \frac{1}{3}, - \frac{1}{3} \right\} \right) = \frac{1}{3} \P_{\overline{\HH}} \times \left\{ -\frac{1}{3}, - \frac{1}{3}, - \frac{1}{3} \right\}.
\]

The root polytopes of the spanning trees of $G$ are polytopes inside $\S_G$, which we can calculate:
\begin{align*}
\QQ_{T_1} &= \Conv \{ (1,0,-1,0,0), (1,0,0,-1,0), (1,0,0,0,-1), (0,1,0,-1,0) \} \\
\QQ_{T_2} &= \Conv \{ (1,0,-1,0,0), (1,0,0,-1,0), (1,0,0,0,-1), (0,1,0,0,-1) \} \\
\QQ_{T_3} &= \Conv \{ (1,0,-1,0,0), (1,0,0,-1,0), (0,1,0,-1,0), (0,1,0,0,-1) \} \\
\QQ_{T_4} &= \Conv \{ (1,0,-1,0,0), (1,0,0,0,-1), (0,1,0,-1,0), (0,1,0,0,-1) \}
\end{align*}
These polytopes are in fact \emph{tetrahedra} and the root polytope of $G$ \emph{decomposes} into pairs of these tetrahedra, as shown in figure \ref{fig:polytope_slices}. Thus we have \emph{triangulations} of $\QQ_G$:
\[
\QQ_G = \QQ_{T_1} \cup \QQ_{T_4} = \QQ_{T_2} \cup \QQ_{T_3}.
\]
Indeed, if we slice this triangulation through $x_1 = x_2 = 1/2$, we obtain the decompositions $\P_\HH = \P_{\TT_1} \cup \P_{\TT_4} = \P_{\TT_2} \cup \P_{\TT_3}$ found earlier. (Precisely, each $\QQ_{\TT_i} $ intersects the slice $\{\frac{1}{2}, \frac{1}{2} \} \times \R^3$ in $\{ \frac{1}{2}, \frac{1}{2} \} \times -\frac{1}{2} \P_{\TT_i}$.) And if we slice through $x_3 = x_4 = x_5 = -1/3$ we obtain corresponding decompositions of $\P_{\overline{\HH}}$.


\begin{figure}
\begin{center}
\includegraphics[scale=0.5]{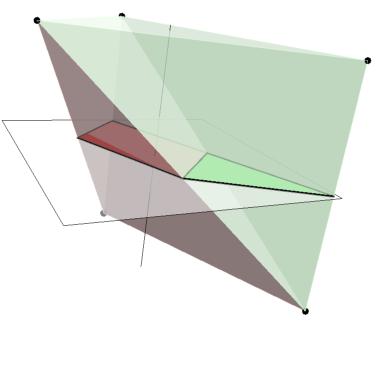}
\includegraphics[scale=0.5]{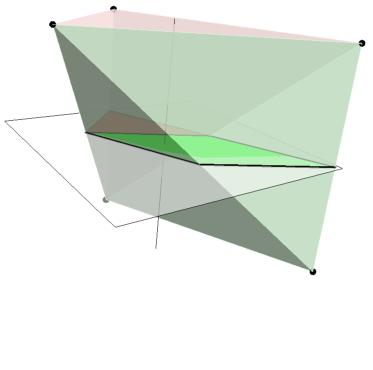}
\end{center}
\caption{The root polytope $\QQ_G$ is triangulated as $\QQ_{\TT_1} \cup \QQ_{\TT_4}$ (left) or $\QQ_{\TT_2} \cup \QQ_{\TT_3}$ (right). Slicing through $x_1 = x_2 = 1/2$ (corresponding to $z=0$ here) yields the decomposition of $\P_\HH$ as $\P_{\TT_1} \cup \P_{\TT_4}$ (left) or $\P_{\TT_2} \cup \P_{\TT_3}$ (right).}
\label{fig:polytope_slices}
\end{figure}

\subsection{Observations and statements}

In our example we observed some interesting phenomena, which of course are not coincidences.
\begin{itemize}
\item
Trimmed GP polytopes of hypergraphs coincide with hypertree polytopes of their abstract duals:
\[
\P^-_\HH = \S_{\overline{\HH}}
\quad \text{and} \quad
\P^-_{\overline{\HH}} = \S_\HH.
\]
\item
The slice of the root polytope $\QQ_G$ with equal coordinates $1/2 = 1/|U|$ on the $U$ factors (resp. $-1/3 = -1/|V|$ on the $V$ factors) is the scaled GP polytope $-\frac{1}{|U|} \P_\HH$ (resp. $\frac{1}{|V|} \P_{\overline{\HH}}$). In this sense, $\QQ_G$ provides a duality between the polytopes $\P_\HH$ and $\P_{\overline{\HH}}$.
\item
Root polytopes of spanning trees $\QQ_{\TT_i}$ form simplices in $\QQ_G$ and certain collections of spanning trees yield triangulations of $\QQ_G$. Slicing such a triangulation by setting $U$ coordinates to $1/|U|$ (resp. $V$ coordinates to $-1/|V|$) yields a decomposition of $\P_\HH$ into GP polytopes $\P_{\TT_i}$ (resp. $\P_{\overline{\HH}}$ into $\P_{\overline{\TT_i}}$). So $\QQ_G$ also provides a duality between decompositions of $\P_\HH$ and $\P_{\overline{\HH}}$.
\item
Both $\HH$ and $\overline{\HH}$ have the same number of hypertrees, i.e. $S_\HH = S_{\overline{\HH}}$.
\end{itemize}

We now give some brief sketches of the proofs. For full details we refer to \cite{Postnikov09} or \cite{Kalman13_Tutte}. We begin by considering polytopes inside the root polytope $\QQ_G$.
\begin{lem}
\label{lem:subgraph_polytope_isomorphism}
The set of subgraphs of $G$, partially ordered by inclusion, and the set of polytopes whose vertices are vertices of $\QQ_G$, partially ordered by inclusion, are isomorphic posets.
\end{lem}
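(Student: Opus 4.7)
The plan is to define the natural map $\Phi$ sending a subgraph $T$ of $G$ (identified with its edge set, since the edge set alone determines $\QQ_T$) to its root polytope $\QQ_T = \Conv\{{\bf i}_u - {\bf i}_v : (u,v) \text{ an edge of } T\}$, and show $\Phi$ is the desired poset isomorphism. Writing $p_{uv} = {\bf i}_u - {\bf i}_v$, the content of the lemma is essentially that the edges of $G$ are in bijection with the vertices of $\QQ_G$ via $(u,v) \mapsto p_{uv}$; once this is known, subgraphs $T$ correspond to subsets $S$ of the vertex set of $\QQ_G$, and each such $S$ determines a unique polytope $\Conv(S)$ whose vertex set is exactly $S$.

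The key step, and the only real obstacle, is verifying that each defining point $p_{uv}$ really is an extreme point of $\QQ_G$, i.e.\ not a nontrivial convex combination of the other $p_{u'v'}$. Suppose $p_{uv} = \sum_i \lambda_i p_{u_i v_i}$ with $\lambda_i > 0$ and $\sum_i \lambda_i = 1$. Reading off the coordinate indexed by $u$ in the $\R^U$ factor forces $1 = \sum_{i : u_i = u} \lambda_i$, so $u_i = u$ for every $i$; reading off the $v$-coordinate in $\R^V$ similarly forces $v_i = v$ for every $i$. Hence the combination is trivial, and $p_{uv}$ is a genuine vertex of $\QQ_G$. The same argument applied to any subgraph shows that the vertex set of $\QQ_T$ is exactly $\{p_{uv} : (u,v) \text{ an edge of } T\}$, in bijection with the edges of $T$.

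With this in place the remainder is formal. For bijectivity: distinct subgraphs $T \ne T'$ differ in some edge, so their root polytopes have different vertex sets and are distinct; conversely, any polytope $\P$ whose vertices are vertices of $\QQ_G$ is determined by its vertex set $S \subseteq \{p_{uv}\}$, and then $\P = \Phi(T)$ for $T = \{(u,v) : p_{uv} \in S\}$. For order-preservation in both directions: $T \subseteq T'$ immediately yields $\QQ_T \subseteq \QQ_{T'}$ from the definition; conversely, if $\QQ_T \subseteq \QQ_{T'}$, then each vertex $p_{uv}$ of $\QQ_T$ lies in $\QQ_{T'}$, and since $p_{uv}$ is extreme in the ambient polytope $\QQ_G \supseteq \QQ_{T'}$ it remains extreme in $\QQ_{T'}$, hence is a vertex of $\QQ_{T'}$, forcing $(u,v) \in T'$. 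This completes the poset isomorphism; apart from the extremality argument, everything reduces to the observation that a polytope is determined by its vertex set.
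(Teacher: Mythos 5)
Your proof is correct and follows essentially the same approach as the paper: identify subgraphs (by their edge sets) with subsets of $\{p_{uv}\}$ and polytopes with their vertex sets, then check the correspondence preserves inclusion both ways. The one thing you do that the paper's terse proof omits is explicitly verify that each $p_{uv} = {\bf i}_u - {\bf i}_v$ is an extreme point of $\QQ_G$ (the coordinate-reading argument), which is precisely what guarantees that edges of $G$ biject with vertices of $\QQ_G$ and that the vertex set of $\QQ_T$ is exactly $\{p_{uv} : (u,v) \in T\}$; this is a genuine gap in the paper's two-sentence proof that you have correctly identified as ``the only real obstacle'' and filled in.
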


\begin{proof}
We noted above that if $F$ is a subgraph of $G$ then $\QQ_F \subseteq \QQ_G$ is a polytope whose vertices are taken from $\QQ_G$. Conversely, a polytope $P$ with vertices from $\QQ_G$ has every vertex of the form ${\bf i}_u - {\bf i}_v$, where $(u,v)$ is an edge of $G$. Thus the vertices of $P$ corresponds to edges of $G$, and $P$ is the root polytope of the subgraph consisting of these edges.
\end{proof}

We now verify that a spanning tree $T$ yields a tetrahedron $\QQ_T \subseteq \QQ_G$, and more. This is lemma 12.5 of \cite{Postnikov09}.
\begin{prop}
Let $F$ be a subgraph of $G$, with root polytope $\QQ_F \subseteq \QQ_G$.
\begin{enumerate}
\item
$\QQ_F$ is a simplex if and only if $F$ is a forest.
\item
$\QQ_F$ is a simplex of the same dimension as $\QQ_G$ if and only if $F$ is a spanning tree of $G$.
\end{enumerate}
\end{prop}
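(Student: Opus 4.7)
The plan is to translate the condition ``$\QQ_F$ is a simplex'' into a linear-algebra condition on the oriented incidence matrix of $F$, and then invoke the classical rank formula for incidence matrices.

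The first step is to confirm that each point ${\bf i}_u - {\bf i}_v$ with $(u,v) \in E(G)$ is a genuine vertex of $\QQ_G$, so that $\QQ_F$ has exactly $|E(F)|$ vertices. Reading the $U$- and $V$-coordinates of any convex combination equal to ${\bf i}_{u_0} - {\bf i}_{v_0}$ forces the weight to concentrate on edges incident to both $u_0$ and $v_0$, of which $(u_0, v_0)$ is the only one. Thus $\QQ_F$ is a simplex precisely when its $|E(F)|$ vertices are affinely independent. For our particular vectors, affine independence coincides with linear independence: any relation $\sum_e c_e ({\bf i}_{u(e)} - {\bf i}_{v(e)}) = 0$ automatically satisfies $\sum_e c_e = 0$ (sum the $U$-coordinates). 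The vectors $\{ {\bf i}_u - {\bf i}_v : (u,v) \in E(F) \}$ are the columns of the oriented incidence matrix of $F$, viewed as a directed bipartite graph with edges $U \to V$. By the classical rank formula, this matrix has rank $n(F) - c(F)$, where $n(F)$ is the number of non-isolated vertices of $F$ and $c(F)$ the number of connected components. Since $|E(F)| \geq n(F) - c(F)$, with equality iff $F$ is acyclic, the columns are linearly independent iff $F$ is a forest, establishing (i). An explicit witness to dependence whenever $F$ has a cycle is provided by any bipartite cycle $u_1 v_1 u_2 v_2 \cdots u_k v_k u_1$, whose edges give an alternating $\pm 1$ combination summing to zero.

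For (ii), the same rank formula yields $\dim \QQ_G = m + n - 2$ (assuming $G$ is connected, as otherwise there is no spanning tree). For a forest $F$, $\dim \QQ_F = |E(F)| - 1$, so a dimension match forces $|E(F)| = m + n - 1$. I would additionally verify that $F$ touches every vertex of $G$: if $F$ omitted some vertex $v^* \in V$, then $\QQ_F \subseteq \{x_{v^*} = 0\}$, whereas $\QQ_G$ is not contained in this hyperplane (since $v^*$ is non-isolated in $G$), contradicting equal dimensions. A forest on all $m + n$ vertices with $m + n - 1$ edges is precisely a spanning tree, and the converse direction is immediate from (i) together with the dimension count. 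The main obstacle is the second-paragraph reduction from polytope vertices to incidence-matrix columns, including the affine-versus-linear independence subtlety; once this identification is made, both parts follow from the standard rank formula for incidence matrices and an elementary dimension count.
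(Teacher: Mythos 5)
Your argument is correct and follows essentially the same route as the paper's: identify the vertices of $\QQ_F$ with the column vectors ${\bf i}_u - {\bf i}_v$ of the oriented incidence matrix, observe that linear dependence among them corresponds exactly to cycles in $F$, and conclude (i); then count dimensions to get (ii). What you add is explicit justification for several steps the paper compresses or leaves tacit --- that each ${\bf i}_u - {\bf i}_v$ is genuinely a vertex of $\QQ_F$, that affine independence reduces to linear independence here (a real subtlety the paper glosses over by writing ``linearly independent vertices, i.e. is a simplex''), the named rank formula in place of ``it is not difficult to see,'' and an explicit edge-count for part (ii) rather than ``as many edges as possible.'' One tiny point: your spanning check (that $F$ omits no vertex) is strictly redundant, since a forest inside $G$ with $m+n-1$ edges already forces $|V(F)| = m+n$ and $c(F)=1$ by the forest relation $|V(F)| - c(F) = |E(F)|$; also, as written, ``$\QQ_F \subseteq \{x_{v^*}=0\}$ but $\QQ_G$ is not'' only contradicts equal dimensions once you invoke $\QQ_F \subseteq \QQ_G$ to conclude the affine spans must then coincide --- worth making that inference explicit if you keep it.
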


\begin{proof}
It is not difficult to see that $F$ contains a cycle if and only if the vectors ${\bf i}_u - {\bf i}_v$, where $(u,v)$ is an edge of $F$, are linearly dependent. Thus $\QQ_F$ has linearly independent vertices, i.e. is a simplex, precisely when $F$ is a forest. The dimension of this simplex is maximal when $F$ has as many edges as possible, i.e. is a spanning tree.
\end{proof}

Returning to the GP polytope $\P_\HH$, the decompositions of $\P_\HH$ found in our example were decompositions of a specific type.
\begin{defn}
Let $P = P_1 + \cdots + P_m$ be a Minkowski sum of polytopes (hence a polytope). 
\begin{enumerate}
\item
A \emph{Minkowski cell} of $P$ is a polytope of the form $B_1 + \cdots + B_m$, of the same dimension as $P$, where each $B_i$ is the convex hull of some of the vertices of $P_i$.
\item
A \emph{mixed subdivision} of $P$ is a decomposition of $P$ as a union of Minkowksi cells, such that the intersection of any two cells, if nonempty, is a common face.
\item
A mixed subdivision of $P$ is \emph{fine} if it cannot be decomposed further as a mixed subdivision of $P$.
\end{enumerate}
\end{defn}
Note that a Minkowski cell of $P$ is always a polytope contained inside $P$, but need not be a simplex; similarly, a fine mixed subdivision need not be a triangulation. The decompositions of $\P_\HH$ and $\P_{\overline{\HH}}$ found in our example are fine mixed subdivisions.

The key idea in relating $\P_\HH$ and $\QQ_G$ is known as the \emph{Cayley trick} \cite{Huber-Rambau-Birkett00, Santos05, Sturmfels94}. Given polytopes $P_1, \ldots, P_m$ in $\R^n$, the Cayley trick relates decompositions of the Minkowski sum $P_1 + \cdots + P_m \subset \R^n$, to decompositions of the polytope
\[
\mathcal{C} (P_1, \ldots, P_m) = \Conv \{ {\bf i}_j - P_j \} \subset \R^m \times \R^n.
\]
Here ${\bf i}_j$ is a vector in $\R^m$, and $P_j \subset \R^n$. The trick is to consider the slice of $\mathcal{C} ( \Delta_{e(1)}, \ldots, \Delta_{e(m)} )$ along $x_1 = \cdots = x_m = \frac{1}{m}$ (where $x_1, \ldots, x_m$ are coordinates on $\R^m$). We find that the result is $-\frac{1}{m} (P_1 + \cdots + P_m) \subset \R^n$.

Observe that if we take $P_1, \ldots, P_m$ as the simplices $\Delta_{e(1)}, \ldots, \Delta_{e(m)}$ (i.e. $\Delta_{e(u)}$ over $u \in U$) in a hypergraph $\HH$, then
\[
\mathcal{C} ( \Delta_{e(1)}, \ldots, \Delta_{e(m)} ) = \QQ_G
\quad \text{and} \quad
\Delta_{e(1)} + \cdots + \Delta_{e(m)} = \P_\HH.
\]
Stated precisely in this context, the Cayley trick is as follows. This statement is taken from \cite[thm. 1.4]{Santos05}; see also 
\cite[prop. 14.5]{Postnikov09} and \cite[sec. 3]{Huber-Rambau-Birkett00}. 
\begin{thm}[Cayley trick] \
\label{thm:cayley_trick}
\begin{enumerate}
\item
The intersection of $\QQ_G$ with 
\begin{enumerate}
\item
$\{ \frac{1}{m}, \ldots, \frac{1}{m} \} \times \R^n$ 
is $\{ \frac{1}{m}, \ldots, \frac{1}{m} \} \times -\frac{1}{m} \P_\HH$;
\item
$\R^m \times \{ -\frac{1}{n}, \ldots, -\frac{1}{n} \}$ 
is $\frac{1}{n} \P_{\overline{\HH}} \times \{ -\frac{1}{n}, \ldots, -\frac{1}{n} \}$.
\end{enumerate}
\item
For any polyhedral subdivision of $\QQ_G$, its intersection with
\begin{enumerate}
\item $\{ \frac{1}{m}, \ldots, \frac{1}{m} \} \times \R^n$ yields a mixed subdivision of $-\frac{1}{m} \P_\HH$;
\item
$\R^m \times \{ -\frac{1}{n}, \ldots, -\frac{1}{n} \}$ yields a mixed subdivision of $\frac{1}{n} \P_{\overline{\HH}}$.
\end{enumerate}
\item
The map from polyhedral subdivisions of $\QQ_G$ to mixed subdivisions of $\P_\HH$ (or $\P_{\overline{\HH}}$) is an isomorphism of partially ordered sets, where subdivisions are partially ordered by refinement.
\item
This map restricts to a bijection between triangulations of $\QQ_G$ and fine mixed subdivisions of $\P_\HH$ (or $\P_{\overline{\HH}}$).
\end{enumerate}
\end{thm}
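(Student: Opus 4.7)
The plan is to reduce parts (ii)--(iv) to the core linear-algebraic computation underlying (i), and to exhibit an explicit inverse for the map on subdivisions. Throughout, I parametrise a point $p \in \QQ_G$ as a convex combination
\[
p = \sum_{(u,v) \in E(G)} \lambda_{u,v} ({\bf i}_u - {\bf i}_v), \qquad \lambda_{u,v} \geq 0, \qquad \sum_{(u,v) \in E(G)} \lambda_{u,v} = 1,
\]
so the $U$-part of $p$ is $\sum_u (\sum_{v \in e(u)} \lambda_{u,v}) {\bf i}_u$ and the $V$-part is $-\sum_v (\sum_{u \in e(v)} \lambda_{u,v}) {\bf i}_v$.

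For part (i)(a), I would impose $x_u = 1/m$ for all $u$, which forces $\sum_{v \in e(u)} \lambda_{u,v} = 1/m$ for each $u$ (the global constraint $\sum \lambda_{u,v} = 1$ then follows automatically). Setting $\mu_{u,v} = m \lambda_{u,v}$, the tuple $(\mu_{u,v})_{v \in e(u)}$ becomes an arbitrary set of convex combination weights on the vertices of $\Delta_{e(u)}$, so
\[
q_u := \sum_{v \in e(u)} \mu_{u,v} {\bf i}_v \in \Delta_{e(u)}
\]
varies freely and independently over each $\Delta_{e(u)}$. The $V$-part of $p$ then equals $-\frac{1}{m} \sum_u q_u$, which traces out $-\frac{1}{m}(\Delta_{e(1)} + \cdots + \Delta_{e(m)}) = -\frac{1}{m} \P_\HH$. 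Part (i)(b) follows by the symmetric computation, imposing $x_v = -1/n$ instead.

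For part (ii), I would apply (i) cell by cell. By lemma \ref{lem:subgraph_polytope_isomorphism}, any cell of a polyhedral subdivision of $\QQ_G$ with vertices among the vertices of $\QQ_G$ is a subgraph polytope $\QQ_F$; the same calculation as in (i) shows its slice equals $\{1/m\}^m \times -\frac{1}{m}(\Delta_{F(1)} + \cdots + \Delta_{F(m)})$, which is a Minkowski cell of $-\frac{1}{m}\P_\HH$. Because slicing is an affine operation that preserves face incidences, two cells of the subdivision meeting along a common face of $\QQ_G$ slice to Minkowski cells meeting along a common face, so the result is a mixed subdivision. The $\P_{\overline{\HH}}$ case is identical after swapping $U$ and $V$.

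The main obstacle, which also delivers (iii) and (iv), is the inverse construction. Given a mixed subdivision of $-\frac{1}{m}\P_\HH$ whose cells have the form $-\frac{1}{m}(\Delta_{S_1^\alpha} + \cdots + \Delta_{S_m^\alpha})$ with each $S_u^\alpha \subseteq e(u)$, I lift each cell to the subgraph polytope $\QQ_{F^\alpha} \subseteq \QQ_G$, where $F^\alpha$ is the subgraph with edges $\{(u,v) : v \in S_u^\alpha\}$. The computation in (i) applied to $F^\alpha$ confirms that $\QQ_{F^\alpha}$ slices back to the original Minkowski cell, giving a two-sided inverse. One must then check that $\{\QQ_{F^\alpha}\}$ actually tiles $\QQ_G$: this requires a dimension count ($\dim \QQ_G = \dim \P_\HH + (m-1)$ when $G$ is connected, since the affine slice is transverse of codimension $m-1$ inside the affine hull of $\QQ_G$) plus the fact that slicing restricts to an affine bijection on each cell, so that covers and face-matchings on the $\P_\HH$ side pull back to covers and face-matchings in $\QQ_G$. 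Refinement is manifestly preserved in both directions, yielding the poset isomorphism (iii). For (iv), the proposition above shows that $\QQ_{F^\alpha}$ is a simplex of maximal dimension exactly when $F^\alpha$ is a spanning tree; these are precisely the cells admitting no further subdivision, so triangulations of $\QQ_G$ correspond bijectively to fine mixed subdivisions of $\P_\HH$.
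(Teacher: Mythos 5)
Your proposal is correct and follows essentially the same route as the paper's proof sketch: a direct parametrisation computation for part (i), reduction to Lemma \ref{lem:subgraph_polytope_isomorphism} and subgraph polytopes $\QQ_F$ for part (ii), lifting Minkowski cells back to Cayley/subgraph polytopes for the inverse in parts (iii)--(iv). Your write-up is more explicit than the paper's in the $\lambda_{u,v} \mapsto \mu_{u,v}$ reparametrisation and in naming the lifted cells $\QQ_{F^\alpha}$, but it leaves the tiling claim (that the lifted cells cover $\QQ_G$) at the same sketch level as the paper, which itself defers full details to \cite{Santos05, Huber-Rambau-Birkett00, Postnikov09}.
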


This statement makes precise the idea that the root polytope $\QQ_G$ provides a \emph{duality} between the GP polytopes $\P_\HH$ and $\P_{\overline{\HH}}$.

\begin{proof}[Proof sketch]
The proof of the first part is not difficult; the calculation in our example woks in the general case. For the second part, note by lemma
\ref{lem:subgraph_polytope_isomorphism}
that a polyhedron in a polyhedral subdivision of $\QQ_G$ is of the form $\QQ_F$ for some subgraph $F$ of $G$, corresponding to hypergraphs $\mathcal{F}, \overline{\mathcal{F}}$, and the desired intersections are just $-\frac{1}{m} \P_{\mathcal{F}}$ and $\frac{1}{n} \P_{\overline{\mathcal{F}}}$, which are Minkowski cells. To see that a mixed subdivision of $\P_\HH$ arises from a unique polyhedral subdivision of $\QQ_G$, one can consider scalings of Minkowski cells and regard these as points in $\R^m \times \R^n$. The map clearly preserves refinements, and the finest polyhedral subdivisions are triangulations.
\end{proof}

In fact, one can show that a mixed subdivision of $\P_\HH$ is fine if and only if in each cell $B = B_1 + \cdots + B_m$ of the subdivision, each $B_i$ is a simplex, and $\sum \dim B_i$ is the dimension of $\P_\HH$ \cite[lem. 14.2]{Postnikov09}, \cite[prop. 2.3]{Santos05}. Letting $B_i = \Delta_{T(i)}$ we obtain $B$ as the GP polytope of the hypergraph $\TT$ with hyperedges $T(i)$; the condition on dimensions means that $\TT$ is a spanning tree, corresponding to a simplex $\QQ_\TT$ in $\QQ_G$. In particular, a spanning tree $T$ yields a simplex of maximal dimension in $\QQ_G$, whose intersection with $\{1/m, \ldots, 1/m\} \times \R^n$ is a Minkowski cell of $\P_\HH$.

The key to the relationship between the hypertree polytope and the trimmed GP polytope is the following lemma. This is lemma 14.9 of \cite{Postnikov09}. Let $T$ be a spanning tree of $G$, with corresponding spanning trees $\TT, \overline{\TT}$ of $\HH$ and $\overline{\HH}$ so that $\P_\TT \subseteq \P_\HH$ is a Minkowski cell.  Recall the hypertree $f_{\overline{\TT}}$ is defined as $\deg_V T - (1,1, \ldots, 1) \in \R^V$, and the notation $\Delta_V = \Conv\{ {\bf i}_{v} \; \mid \; v \in V \}$.
\begin{lem} \
\begin{enumerate}
\item
$\Delta_V + f_{\overline{\TT}} \subseteq \P_\TT$. That is, the polytope $\P_\TT$ contains a copy of the top-dimensional simplex $\Delta_V$, translated by the hypertree $f_{\overline{\TT}} \in \R^n$ of $\overline{\TT}$.
\item
For any integer vector $(a_1, \ldots, a_n) \in \Z^V$ other than $f_{\overline{\TT}}$, the translated simplex $(a_1, \ldots, a_n) + \Delta_V$ shares no interior points with $\P_\TT$.
\end{enumerate}
\end{lem}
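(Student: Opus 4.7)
For part (i), the plan is to exhibit each vertex of $\Delta_V + f_{\overline{\TT}}$ as a point of $\P_\TT$ and then invoke convexity. The vertices are $f_{\overline{\TT}} + {\bf i}_v$ for $v \in V$. Fixing such a $v$, I would root the spanning tree $T$ at $v$; since $T$ is bipartite with $v \in V$, each $u \in U$ has a unique parent lying in $V$, giving a map $\phi_v \colon U \to V$ with $\phi_v(u) \in T(u)$. The point $\sum_{u \in U} {\bf i}_{\phi_v(u)}$ is then manifestly a sum of vertices of the simplices $\Delta_{T(u)}$, hence a point of $\P_\TT$, and a short children-count identifies it with $f_{\overline{\TT}} + {\bf i}_v$: the root $v$ has all $\deg_v T$ of its $U$-neighbours as children, while each other $w \in V$ has $\deg_w T - 1$ children (the remaining neighbour being its parent).

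For part (ii), the key step is the face inequality
\[
x_v \le \deg_v T \quad \text{for every } x \in \P_\TT \text{ and every } v \in V,
\]
which is immediate from writing $x = \sum_{u} p_u$ with $p_u \in \Delta_{T(u)}$: the $v$-th coordinate of $x$ is a sum of $\deg_v T$ nonnegative numbers, each bounded by $1$. Part (i) furnishes points of $\P_\TT$ where $x_v$ is strictly less than $\deg_v T$ (for instance the vertex $f_{\overline{\TT}} + {\bf i}_w$ of $\Delta_V + f_{\overline{\TT}}$ for any $w \ne v$), so $\{x_v = \deg_v T\}$ cuts out a proper face of $\P_\TT$, and the strict inequality $x_v < \deg_v T$ therefore holds throughout the interior of $\P_\TT$.

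To finish, I would suppose $x$ lies in the interior of both $\P_\TT$ and $(a_1, \ldots, a_n) + \Delta_V$, and write $x_v = a_v + \mu_v$ with $a_v \in \Z$ and $\mu_v \in (0,1)$. The strict inequality forces $a_v \le \deg_v T - 1 = (f_{\overline{\TT}})_v$ for every $v$. On the other hand, $\P_\TT$ lies in the hyperplane $\sum_v x_v = m$ and $\Delta_V$ lies in $\sum_v y_v = 1$, so $\sum_v a_v = m - 1 = \sum_v (f_{\overline{\TT}})_v$. Coordinatewise inequalities combined with equal coordinate sums force $a = f_{\overline{\TT}}$.

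I do not expect a significant obstacle. The only subtle point worth flagging is recognising that part (i) is precisely what rules out the degenerate possibility that $x_v = \deg_v T$ holds throughout $\P_\TT$, ensuring that the strict face inequality used in the proof of part (ii) is genuinely available.
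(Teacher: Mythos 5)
Your proof is correct, but it takes a genuinely different route from the paper's for both parts, so a brief comparison is worthwhile.

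For part (i), the paper performs an iterated ``shrinking'' move: repeatedly find $u_1, u_2 \in U$ with $T(u_1) \cap T(u_2) = \{v\}$, move all of $u_2$'s other edges to $u_1$ (preserving $V$-degrees), and use $\Delta_X + \Delta_Y \supseteq \Delta_{X\cup Y} + \Delta_{X\cap Y}$ to conclude the polytope only shrinks. This terminates with $T(1) = V$ and all other $T(u)$ singletons, exhibiting a sub-polytope equal to $\Delta_V + f_{\overline{\TT}}$. Your rooting argument instead directly exhibits each vertex $f_{\overline{\TT}} + {\bf i}_v$ of the translated simplex as the point $\sum_{u\in U} {\bf i}_{\phi_v(u)} \in \P_\TT$ via the parent map, with a clean children-count verification: the root $v$ has $\deg_v T$ children in $U$, while each other $w\in V$ has $\deg_w T - 1$. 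This is more direct, avoids the induction, and does not require passing through auxiliary trees that may not be subgraphs of $G$.

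For part (ii), the paper observes that since $T$ is a tree, the simplices $\Delta_{T(u)}$ are ``independent'' and $\P_\TT$ is combinatorially a product of simplices; it then argues that two translates of $\Delta_V$ overlapping $\P_\TT$'s interior would force an integer-length interval inside a factor simplex, a contradiction after projecting. Your argument instead establishes the explicit defining inequalities $x_v \le \deg_v T$ for $\P_\TT$, uses part (i) to show these cut out proper faces (so strict inequality holds on the relative interior), and finishes by combining the resulting coordinate-wise bounds $a_v \le (f_{\overline{\TT}})_v$ with the constraint $\sum_v a_v = m-1 = \sum_v (f_{\overline{\TT}})_v$ coming from the hyperplanes $\sum x_v = m$ and $\sum y_v = 1$. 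Both are valid; yours has the minor virtue of making the supporting hyperplanes of $\P_\TT$ explicit and replaces the geometric ``product of simplices'' picture with elementary inequality bookkeeping. You correctly flag that part (i) is what guarantees $\{x_v = \deg_v T\}$ is a proper face rather than all of $\P_\TT$; implicitly this uses $n \geq 2$, but the $n=1$ case is degenerate (a point) and trivially fine.
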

Thus, if we consider integer shifts of $\Delta_V$, only one such shift lies inside $\P_\TT$, namely the shift by $f_{\overline{\TT}} = \deg_V T - (1, \ldots, 1)$; all other shifts have no (interior) overlap with $\P_\TT$.

\begin{proof}
Recall $\P_\HH = \Delta_{T(1)} + \cdots + \Delta_{T(m)}$, where $T(u) = \{v \mid (u,v) \text{ is an edge in } G \}$. Generalising our discussion of $T_3$ and $T_1$ in our example, we will successively adjust $T$ to obtain a sequence of trees (not necessarily spanning trees of $G$), all with the same $V$-degree vectors, and with their GP polytopes shrinking at each stage, hence always contained in the original $\P_\TT$. 

As $T$ is connected, there exist $u_1 < u_2$ such that $T(u_1) \cap T(u_2)$ is nonempty; indeed, as $T$ is a tree, we have $T(u_1) \cap T(u_2) = \{v\}$ for some $v \in V$. We remove all edges of the form $(u_2, v')$ from $T$, for $v' \neq v$, and add corresponding edges $(u_1, v')$. In this way $V$-degree vectors are preserved, and in the corresponding polytope we replace $\Delta_{T(u_1)} + \Delta_{T(u_2)}$ with $\Delta_{v} + \Delta_{T(u_1) \cup T(u_2)}$. Since in general $\Delta_X + \Delta_Y \supseteq \Delta_{X \cap Y} + \Delta_{X \cup Y}$, the polytope shrinks.

Repeating this process, we can eventually arrange that $T(1) = V$ (i.e. the vertex $1 \in U$ is connected to every vertex in $V$), and every other $T(u)$ is a singleton. The multiset union of these singletons precisely describes $\deg_V T - (1,\ldots,1)$. Thus we obtain a spanning tree $T'$ of $G$ (and $\TT'$ of $\HH$) such that $\deg_V T' = \deg_V T$, with hypertree $f_{\TT'} = f_{\TT}$ and corresponding polytope $\P_{\TT'} = \Delta_V + \deg_V T - (1,\ldots,1) = \Delta_V + f_{\TT'}$. Since $\P_{\TT}' \subseteq \P_{\TT}$, the first claim follows.

If there is another translation of $\Delta_V$ which overlaps with $\P_\TT$ in its interior, then we have an interval in the interior of $\P_\TT$ whose endpoints differ by an element of $\Z^n$. But as $T$ is a tree, $\P_\TT$ is a Minkowski sum of independent simplices, and hence can be regarded as a product of simplices; projecting to one of these factors, we obtain an interval in the interior of a standard simplex, whose endpoints differ by an integer vector; this is a contradiction.
\end{proof}

Now $\Delta_V + f_{\overline{\TT}} \subseteq \P_\TT \subseteq \P_\HH$ implies by definition that $f_{\overline{\TT}} \in \P_\HH - \Delta_V = \P^-_\HH$. As this is true for all hypertrees $\overline{\TT}$ of $\overline{\HH}$, the hypertree polytope of $\overline{\HH}$ must be contained in the trimmed GP polytope, i.e. $\S_{\overline{\HH}} \subseteq \P^-_\HH$. On the other hand, if we take a point $(a_1, \ldots, a_n)$ in $\Z^n$ which is not a hypertree of any spanning tree of $\overline{\HH}$, then the above lemma says that $(a_1, \ldots, a_n) + \Delta_V$ shares no interior points with $\P_\TT$, for any spanning tree $\TT$ of $\HH$. But $\P_\HH$ has decompositions into Minkowski cells of the form $\P_\TT$, so such an $(a_1, \ldots, a_n) + \Delta_V$ must share no interior points with the entire GP polytope $\P_\HH$. In particular, $(a_1, \ldots, a_n)$ is not an element of the trimmed polytope $\P_\HH^-$. Hence the sets of lattice points $P^-_\HH$ and $S_{\overline{\HH}}$ coincide.

Moreover, if we take a fine mixed subdivision of $\P_\HH$ into $\bigcup_{i} \P_{\TT_i}$, for some spanning trees $\TT_i$, then any translate $(a_1, \ldots, a_n) + \Delta_V$ which lies in $\P_\HH$ must in fact lie in one of the $\P_{\TT_i}$. So the full set of hypertrees $S_{\overline{\HH}}$ must coincide with the set of hypertrees arising from the $\TT_i$.

Finally, as both $\S_{\overline{\HH}}$ and $\P_\HH^-$ have vertices with integer coordinates, they are the convex hulls of their lattice points, hence coincide. So we have proved $\S_{\overline{\HH}} = \P^-_\HH$, and more; compare \cite[thm. 12.9]{Postnikov09}.
\begin{thm} 
\label{thm:trimmed_GP_spanning_tree_polytopes}
Let $T_1, \ldots, T_s$ be a set of spanning trees such that $\P_{\overline{\TT_i}}$ form a fine mixed subdivision of $\P_{\overline{\HH}}$, or equivalently, such that $\QQ_{\TT_i}$ form a triangulation of $\QQ_G$.

Then the hypertrees $f_{\overline{\TT_1}}, \ldots, f_{\overline{\TT_s}}$ coincide precisely with the hypertrees $S_{\overline{\HH}}$ of $\overline{\HH}$, which in turn coincide with the set of lattice points $P^-_\HH$ of the trimmed GP polytope of $\HH$. Moreover 
\[
\S_{\overline{\HH}} = \P^-_\HH.
\]
\qed
\end{thm}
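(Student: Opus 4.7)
The plan is to derive the theorem from two ingredients already in hand: the preceding lemma, which says that each single-tree cell $\P_\TT$ contains exactly one integer translate of the top-dimensional simplex $\Delta_V$, namely the shift by $f_{\overline{\TT}}$; and the Cayley trick, which supplies the fine mixed subdivision $\P_\HH = \bigcup_i \P_{\TT_i}$ corresponding to the triangulation $\bigcup_i \QQ_{\TT_i}$ of $\QQ_G$. The strategy is to identify the lattice points of $\P^-_\HH$ with the hypertrees realised by the $\TT_i$, and then upgrade that lattice-point equality to an equality of polytopes.

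First I would show $\S_{\overline{\HH}} \subseteq \P^-_\HH$. For any spanning tree $\overline{\TT}$ of $\overline{\HH}$, the preceding lemma gives $\Delta_V + f_{\overline{\TT}} \subseteq \P_\TT \subseteq \P_\HH$, so by the definition of the Minkowski difference $f_{\overline{\TT}} \in \P_\HH - \Delta_V = \P^-_\HH$. Taking the convex hull over all hypertrees yields the inclusion, and in particular $\{f_{\overline{\TT_1}}, \ldots, f_{\overline{\TT_s}}\} \subseteq S_{\overline{\HH}} \subseteq P^-_\HH$.

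Next I would prove the reverse inclusion $P^-_\HH \subseteq \{f_{\overline{\TT_1}}, \ldots, f_{\overline{\TT_s}}\}$ at the level of lattice points. Given $a \in P^-_\HH \cap \Z^V$, the translate $a + \Delta_V$ lies inside $\P_\HH = \bigcup_i \P_{\TT_i}$ and has the same affine dimension as $\P_\HH$, so it must share interior points with at least one cell $\P_{\TT_i}$. The preceding lemma then forces $a = f_{\overline{\TT_i}}$. Chaining both directions gives
\[
\{f_{\overline{\TT_1}}, \ldots, f_{\overline{\TT_s}}\} \subseteq S_{\overline{\HH}} \subseteq P^-_\HH \subseteq \{f_{\overline{\TT_1}}, \ldots, f_{\overline{\TT_s}}\},
\]
so all three lattice-point sets coincide. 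To upgrade to the polytope equality $\S_{\overline{\HH}} = \P^-_\HH$, I would observe that both are lattice polytopes and hence each equals the convex hull of its own integer points; equality of lattice-point sets then gives equality of polytopes. The hypertree polytope is lattice by definition, while $\P^-_\HH$ is lattice because $\P_\HH$ is a generalised permutohedron whose facet inequalities have integer data, and Minkowski-subtracting $\Delta_V$ shifts those inequalities by integer amounts.

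The hard part will be the step asserting that $a + \Delta_V$ meets the \emph{interior} of some cell $\P_{\TT_i}$, rather than sitting entirely on shared boundary faces of the subdivision. This relies on $a + \Delta_V$ being full-dimensional inside the affine hull of $\P_\HH$, so that any neighbourhood of one of its interior points in the ambient affine hyperplane meets some cell in a top-dimensional set, yielding a common interior point. A secondary subtlety is verifying the lattice-vertex property of $\P^-_\HH$; the cleanest route is via Postnikov's structural results on generalised permutohedra, where trimming operations of this kind are shown to preserve the lattice-polytope property.
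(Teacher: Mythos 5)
Your proposal is correct and takes essentially the same approach as the paper: use part (i) of the preceding lemma to get the inclusion $\S_{\overline{\HH}} \subseteq \P^-_\HH$, use part (ii) together with the fine mixed subdivision to see that any integer point of $\P^-_\HH$ must be $f_{\overline{\TT_i}}$ for some $i$ (because the translate $a + \Delta_V$ is top-dimensional and hence must share interior with some cell $\P_{\TT_i}$), and then upgrade the lattice-point equality to polytope equality by observing that both are lattice polytopes. The paper separates the identification $P^-_\HH = S_{\overline{\HH}}$ from the identification with the specific set $\{f_{\overline{\TT_i}}\}$, whereas you prove the chain of inclusions in one pass; this is a minor reorganization, and the key ideas — dimension counting for the interior intersection and lattice-vertex integrality of the trimmed generalised permutohedron — are the same ones the paper relies on (and, like the paper, leaves to Postnikov for the latter).
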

Applying duality to the entire argument a corresponding statement on the abstract duals; in particular $\S_\HH = \P^-_{\overline{\HH}}$.

It is not too difficult to show that if we have a triangulation $\QQ_{T_1}, \ldots, \QQ_{T_s}$ of $\QQ_G$ arising from spanning trees $T_1, \ldots, T_s$, then all the $T_i$ have distinct hypertrees in $\HH$ and $\overline{\HH}$ \cite[lem. 12.8]{Postnikov09}. Hence the equality of sets $\{ f_{\overline{\TT_1}}, \ldots, f_{\overline{\TT_s}} \} = S_{\overline{\HH}}$ in the above theorem involves no repeated elements. So the number of hypertrees in $\overline{\HH}$ is equal to the number of simplices arising in any triangulation of $\QQ_G$. Applying the same argument, the number of hypertrees in$\HH$ must also coincide with the number of simplices triangulating $\QQ_G$, and we have the following.
\begin{thm}
\label{thm:hypertrees_in_abstract_duals}
Any hypergraph $\HH$ and its abstract dual $\overline{\HH}$ have the same number of hypertrees: $|S_\HH | = | S_{\overline{\HH}} |$.
\qed
\end{thm}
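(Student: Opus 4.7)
The plan is to count hypertrees on both sides via a common intermediary, namely a triangulation of the root polytope $\QQ_G$. By Theorem~\ref{thm:trimmed_GP_spanning_tree_polytopes}, if $T_1, \ldots, T_s$ are spanning trees whose root polytopes $\QQ_{T_1}, \ldots, \QQ_{T_s}$ triangulate $\QQ_G$, then $\{ f_{\overline{\TT_1}}, \ldots, f_{\overline{\TT_s}} \} = S_{\overline{\HH}}$ as sets. By the Cayley trick (Theorem~\ref{thm:cayley_trick}), this same triangulation of $\QQ_G$ slices, at $x_3 = \cdots = x_{2+n} = -1/n$, to a fine mixed subdivision of $\frac{1}{n} \P_{\overline{\HH}}$ whose cells come from the $\overline{\TT_i}$; hence Theorem~\ref{thm:trimmed_GP_spanning_tree_polytopes} applies dually and gives $\{ f_{\TT_1}, \ldots, f_{\TT_s} \} = S_\HH$ as sets too.

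First I would fix such a triangulation $\QQ_{T_1}, \ldots, \QQ_{T_s}$ of $\QQ_G$, whose existence is guaranteed since $\QQ_G$ is a polytope and spanning trees of $G$ yield simplices $\QQ_{T_i} \subseteq \QQ_G$ of maximal dimension via Proposition (Postnikov's lemma 12.5), while Theorem~\ref{thm:cayley_trick}(iv) ensures triangulations exist. Second, I would invoke Theorem~\ref{thm:trimmed_GP_spanning_tree_polytopes} and its abstract-dual version to get the two set equalities above. Third, I would show that the assignments $i \mapsto f_{\overline{\TT_i}}$ and $i \mapsto f_{\TT_i}$ are both injective, so that both sets have cardinality exactly $s$; combining these gives $|S_\HH| = s = |S_{\overline{\HH}}|$.

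The main obstacle is the injectivity claim: why must the spanning trees in a triangulation of $\QQ_G$ give pairwise distinct hypertrees in $\overline{\HH}$ (and in $\HH$)? To prove this I would argue as follows. Suppose $T_i$ and $T_j$ are two spanning trees in the triangulation with $f_{\overline{\TT_i}} = f_{\overline{\TT_j}}$, i.e.\ $\deg_V T_i = \deg_V T_j$. By the lemma preceding Theorem~\ref{thm:trimmed_GP_spanning_tree_polytopes}, the simplex $\Delta_V + f_{\overline{\TT_i}}$ sits inside both $\P_{\TT_i}$ and $\P_{\TT_j}$, and has interior points. Slicing the triangulation by $\{1/m,\ldots,1/m\} \times \R^n$ produces a fine mixed subdivision of $-\tfrac{1}{m}\P_\HH$ in which $-\tfrac{1}{m}\P_{\TT_i}$ and $-\tfrac{1}{m}\P_{\TT_j}$ are distinct cells; but two cells of a mixed subdivision intersect only in a common face, so they cannot share an interior point of a full-dimensional subsimplex. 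This forces $i = j$, giving injectivity.

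The symmetric argument, slicing by the $V$-coordinate hyperplane, gives injectivity of $i \mapsto f_{\TT_i}$, completing the count $|S_\HH| = s = |S_{\overline{\HH}}|$. I expect the only subtlety beyond bookkeeping to be verifying that the shared translate of $\Delta_V$ really witnesses an interior overlap of $\P_{\TT_i}$ and $\P_{\TT_j}$ (rather than lying on a face); this follows because $\Delta_V$ is top-dimensional in the affine hull of $\P_\HH$, so its interior is automatically interior in $\P_\HH$ and hence in any Minkowski cell containing it.
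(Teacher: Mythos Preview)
Your proposal is correct and follows essentially the same route as the paper: fix a triangulation of $\QQ_G$ by spanning-tree simplices, invoke Theorem~\ref{thm:trimmed_GP_spanning_tree_polytopes} on both sides to see that the hypertrees of the $\TT_i$ (resp.\ $\overline{\TT_i}$) exhaust $S_\HH$ (resp.\ $S_{\overline{\HH}}$), and then argue that these hypertrees are pairwise distinct so that both counts equal $s$. The paper simply cites \cite[lem.~12.8]{Postnikov09} for the distinctness step, whereas you supply the argument directly via the shared full-dimensional translate $\Delta_V + f_{\overline{\TT_i}}$ forcing an interior overlap of two cells in a mixed subdivision; your argument is sound and is essentially the content of that lemma.
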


Indeed, in our example, we found only two distinct hypertrees in $\HH$ and in $\overline{\HH}$; and these two hypertrees were realised in the decompositions seen of $\QQ_G$, $\P_\HH$ and $\P_{\overline{\HH}}$.

In general the hypertree polytopes $\S_\HH$ and $\S_{\overline{\HH}}$ may look very different; it is perhaps surprising that they should contain the same number of lattice points.

\section{Plane graphs, dualities and trinities}
\label{sec:plane_graphs_dualities_trinities}

\subsection{Plane graphs and planar duals}
\label{sec:planar_duals}

We now consider \emph{plane} graphs. For us a plane graph is a graph embedded in $\R^2$, up to isotopy; adding a point at infinity, we can also regard the graph as embedded in $S^2$. 

A plane graph $G$ has its own form of duality, distinct from abstract duality of hypergraphs: a \emph{planar dual} $G^*$. We place one vertex in each complementary region of $G$ in the plane; the set $R$ of these vertices is the vertex set of $G^*$. Each edge $e$ of $G$ has complementary regions on either side (possibly the same region), with corresponding vertices $r_1, r_2 \in R$ (possibly $r_1 = r_2$), so we may draw an edge $e^*$ from $r_1$ to $r_2$ through $e$. These edges $e^*$ form the edges of the planar dual $G^*$. So $G^*$ is a plane graph with vertices in bijection with the complementary regions of $G$, and edges in bijection with the edges of $G$. Observe that $G$ can be obtained from $G^*$ in the same way; $G$ and $G^*$ are planar duals of each other.

\subsection{Plane bipartite graphs and trinities}
\label{sec:plane_bipartite_graphs_trinities}

In this story we are concerned with plane graphs which are also bipartite. Plane bipartite graphs have notions of both \emph{abstract} and \emph{planar} duality, and the interaction of these distinct types of duality gives rise to interesting structure.

So, let $G$ be a plane bipartite graph with vertex classes $V$ and $E$. Let $R$ be a set of vertices, one in each complementary region of $G$. We can think of these three vertex classes as distinct \emph{colours}: the vertices in $V,E,R$ are called \emph{violet}, \emph{emerald} and \emph{red} respectively.

Each complementary region of $G$ then contains a single red vertex, and the boundary of this region consists of violet and emerald vertices, together with edges of $G$. Since $G$ is bipartite, the violet and emerald vertices alternate around the boundary of the region. We can draw an edge from each red vertex to all the violet and emerald vertices around the boundary of the corresponding region, so as to obtain a \emph{triangulation} of $S^2$ --- that is, a graph embedded in $S^2$ for which every complementary region is a triangle. Moreover, each triangle in the triangulation has vertices of three distinct colours.
\begin{defn}
A triangulation of $S^2$ with each vertex coloured violet, emerald or red, such that each triangle has one vertex of each colour, is called a \emph{trinity}.
\end{defn}
Since each edge of the triangulation has vertices of distinct colours, and we can colour each edge to have the unique colour distinct from its endpoints. Moreover, each triangle has a violet, emerald and red vertex, labelled in either a clockwise or anticlockwise order; and triangles sharing an edge have their vertices in opposite orders. Thus the triangles come in two types. Accordingly, we colour a triangle black if the order is clockwise, and white if the order is anticlockwise; any two triangles sharing an edge have different colours. For the bipartite graph of figure \ref{fig:graph_G}, the corresponding trinity is shown in figure \ref{fig:easy_trinity}. See figure \ref{fig:trinity} for another example.

We have seen that any bipartite plane graph yields a trinity; and conversely, by taking the violet and emerald vertices, together with the red edges joining them, a trinity yields a bipartite plane graph. But we could equally take the emerald and red vertices; or the red and violet vertices, and obtain bipartite plane graphs. So a trinity $\Tr$ naturally contains \emph{three} bipartite plane graphs. We define the \emph{violet graph} $G_V$ to have the violet edges, and vertex classes $(E,R)$; the \emph{emerald graph} $G_E$ to have emerald edges and vertex classes $(R,V)$; and the \emph{red graph} $G_R$ to have red edges and vertex classes $(V,E)$. (The red graph is our original graph, $G_R = G$.) 

In other words, bipartite plane graphs naturally come in threes, and have a natural form, not of duality, but of \emph{triality}. Figures \ref{fig:easy_triple_of_graphs} and \ref{fig:trinity} show triples of bipartite plane graphs associated to trinities.

\begin{figure}
\begin{center}
\begin{tikzpicture}[scale = 1.5]
\coordinate (e1) at (0,0);
\coordinate (e2) at (0,-2);
\coordinate (v1) at (0,1);
\coordinate (v2) at (1,-1);
\coordinate (v3) at (-1,-1);
\coordinate (r1) at (0,-1);
\coordinate (r2) at (0,2);

\fill [black!10!white] (r2) to [bend right=60] (e1) -- (v1) -- cycle;
\fill [black!10!white] (r2) .. controls (-3,2) and (-3,-2) .. (e2) -- (v3) to [bend left=60] (r2);
\fill [black!10!white] (r1) -- (e1) -- (v3) -- cycle;
\fill [black!10!white] (r1) -- (e2) -- (v2) -- cycle;
\fill [black!10!white] (r2) to [bend left=60] (e1) -- (v2) to [bend right=60] (r2);

\draw [ultra thick, red] (e1) -- (v1);
\draw [ultra thick, red] (e1) -- (v2);
\draw [ultra thick, red] (e1) -- (v3);
\draw [ultra thick, red] (e2) -- (v2);
\draw [ultra thick, red] (e2) -- (v3);

\draw [ultra thick, blue] (r1) -- (e1);
\draw [ultra thick, blue] (r1) -- (e2);
\draw [ultra thick, blue] (r2) to [bend right=60] (e1); 
\draw [ultra thick, blue] (r2) to [bend left=60] (e1);
\draw [ultra thick, blue] (r2) .. controls (-3,2) and (-3,-2) .. (e2);

\draw [ultra thick, green!50!black] (r1) -- (v2);
\draw [ultra thick, green!50!black] (r1) -- (v3);
\draw [ultra thick, green!50!black] (r2) -- (v1);
\draw [ultra thick, green!50!black] (r2) to [bend left=60] (v2);
\draw [ultra thick, green!50!black] (r2) to [bend right=60] (v3);

\foreach \x in {(e1), (e2)}
{
\draw [green!50!black, fill=green!50!black] \x circle  (3pt);
}

\foreach \x in {(v1), (v2), (v3)}
{
\draw [blue, fill=blue] \x circle  (3pt);
}

\foreach \x in {(r1), (r2)}
{
\draw [red, fill=red] \x circle  (3pt);
}

\end{tikzpicture}

\begin{tikzpicture}[scale = 1]
\coordinate (e1) at (0,0);
\coordinate (e2) at (0,-2);
\coordinate (v1) at (0,1);
\coordinate (v2) at (1,-1);
\coordinate (v3) at (-1,-1);
\coordinate (r1) at (0,-1);
\coordinate (r2) at (0,2);

\draw [ultra thick, blue] (r1) -- (e1);
\draw [ultra thick, blue] (r1) -- (e2);
\draw [ultra thick, blue] (r2) to [bend right=60] (e1); 
\draw [ultra thick, blue] (r2) to [bend left=60] (e1);
\draw [ultra thick, blue] (r2) .. controls (-3,2) and (-3,-2) .. (e2);

\foreach \x in {(e1), (e2)}
{
\draw [green!50!black, fill=green!50!black] \x circle  (3pt);
}

\foreach \x in {(v1), (v2), (v3)}
{
\draw [blue, fill=blue] \x circle  (3pt);
}

\foreach \x in {(r1), (r2)}
{
\draw [red, fill=red] \x circle  (3pt);
}

\end{tikzpicture}
\hspace{2 cm}
\begin{tikzpicture}[scale = 1]
\coordinate (e1) at (0,0);
\coordinate (e2) at (0,-2);
\coordinate (v1) at (0,1);
\coordinate (v2) at (1,-1);
\coordinate (v3) at (-1,-1);
\coordinate (r1) at (0,-1);
\coordinate (r2) at (0,2);

\draw [ultra thick, green!50!black] (r1) -- (v2);
\draw [ultra thick, green!50!black] (r1) -- (v3);
\draw [ultra thick, green!50!black] (r2) -- (v1);
\draw [ultra thick, green!50!black] (r2) to [bend left=60] (v2);
\draw [ultra thick, green!50!black] (r2) to [bend right=60] (v3);

\foreach \x in {(e1), (e2)}
{
\draw [green!50!black, fill=green!50!black] \x circle  (3pt);
}

\foreach \x in {(v1), (v2), (v3)}
{
\draw [blue, fill=blue] \x circle  (3pt);
}

\foreach \x in {(r1), (r2)}
{
\draw [red, fill=red] \x circle  (3pt);
}

\end{tikzpicture}
\hspace{2 cm}
\begin{tikzpicture}[scale = 1]
\coordinate (e1) at (0,0);
\coordinate (e2) at (0,-2);
\coordinate (v1) at (0,1);
\coordinate (v2) at (1,-1);
\coordinate (v3) at (-1,-1);
\coordinate (r1) at (0,-1);
\coordinate (r2) at (0,2);

\draw [ultra thick, red] (e1) -- (v1);
\draw [ultra thick, red] (e1) -- (v2);
\draw [ultra thick, red] (e1) -- (v3);
\draw [ultra thick, red] (e2) -- (v2);
\draw [ultra thick, red] (e2) -- (v3);

\foreach \x in {(e1), (e2)}
{
\draw [green!50!black, fill=green!50!black] \x circle  (3pt);
}

\foreach \x in {(v1), (v2), (v3)}
{
\draw [blue, fill=blue] \x circle  (3pt);
}

\foreach \x in {(r1), (r2)}
{
\draw [red, fill=red] \x circle  (3pt);
}

\end{tikzpicture}
\end{center}
\caption{The trinity derived from the bipartite plane graph from figure \ref{fig:graph_G} (with red vertices added in complementary regions), and the corresponding triple of bipartite plane graphs (including the original graph $G=G_R$).}
\label{fig:easy_triple_of_graphs}
\label{fig:easy_trinity}
\end{figure}
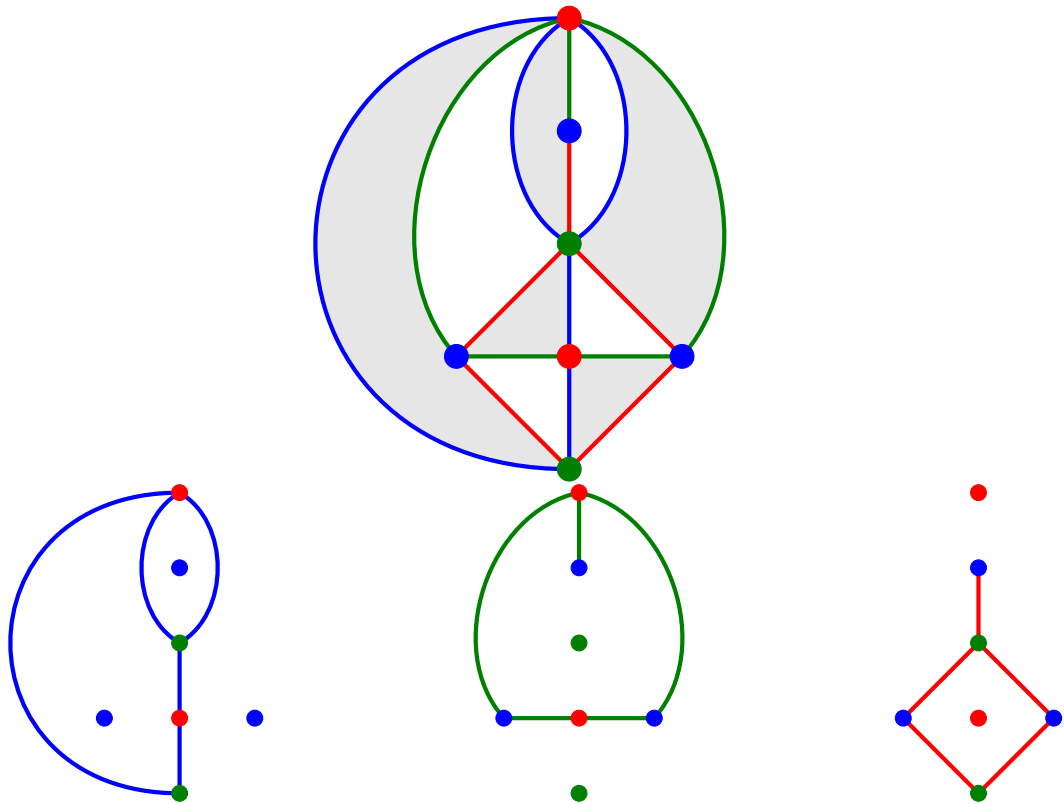

\begin{figure}
\begin{center}
\begin{tikzpicture}[scale = 0.25]
\coordinate (r3) at (1,6);
\coordinate (e1) at (-1,4); 
\coordinate (v1) at (-1.5, 2);
\coordinate (e2) at (-2,0); 
\coordinate (v2) at (-2.3,-2); 
\coordinate (e3) at (-2.5,-4); 
\coordinate (r0) at (-3,-7); 
\coordinate (v4) at (3,4);
\coordinate (r2) at (2.5,0);
\coordinate (v3) at (4,-3);
\coordinate (e0) at (7,3);
\coordinate (v0) at (-6,6); 
\coordinate (r1) at (-6, 0); 

\fill [black!10!white] (e0)
.. controls ($ (e0) + (60:7) $) and ($ (v0) + (60:7) $) .. (v0)
to [out=30, in=120] (r3)
to [out=30, in=120] (e0);

\fill [black!10!white] (r3)
to[out=210,in=70] (e1)
to [out=10, in=180] (v4)
to [out=120, in=300] (r3);

\fill [black!10!white] (v0)
.. controls ($ (v0) + (240:3) $) and ($ (r1) + (150:3) $) .. (r1)
to [out=90, in=190] (e1)
to [out=150, in=330] (v0);

\fill [black!10!white] (e1)
to [out = 250, in = 70] (v1)
to [out=0, in=120] (r2)
to [out=90, in=330] (e1);

\fill [black!10!white] (v1)
to (e2)
to [out=180, in=330] (r1)
to [out=60, in=180] (v1);

\fill [black!10!white] (e2)
to (v2)
to [out=350, in=210] (r2)
to [out=180, in=0] (e2);

\fill [black!10!white] (v2)
to (e3)
.. controls ($ (e3) + (120:2) $)  and ($ (r1) + (240:5) $) .. (r1)
to [out=270, in=170] (v2);

\fill [black!10!white] (v0) 
.. controls ($ (v0) + (210:8) $) and ($ (e3) + (200:8) $) .. (e3)
to [out=270, in=90] (r0)
.. controls ($ (r0) + (180:15) $) and ($ (v0) + (150:6) $) .. (v0);

\fill [black!10!white] (v4)
to [out=0, in=165] (e0)
to [out=240, in=0] (r2)
to [out=30, in=300] (v4);

\fill [black!10!white] (v3) 
to [out=90, in=300] (r2)
to [out=270, in=20] (e3)
to [out=300, in=210] (v3);

\fill [black!10!white] (v3) 
to [out=30, in=300] (e0)
.. controls ($ (e0) + (345:10) $)  and ($ (r0) + (270:8) $) .. (r0)
to [out=0, in=270] (v3);

\draw [ultra thick, red] (e1) to [out = 250, in = 70] (v1); 
\draw [ultra thick, red] (v1) to (e2); 
\draw [ultra thick, red] (e2) to (v2); 
\draw [ultra thick, red] (v2) to (e3); 
\draw [ultra thick, red] (e0) to [out=165, in=0] (v4); 
\draw [ultra thick, red] (v4) to [out=180, in=10] (e1); 
\draw [ultra thick, red] (v0)
.. controls ($ (v0) + (210:8) $) and ($ (e3) + (200:8) $) .. (e3); 
\draw [ultra thick, red] (e1) to [out=150, in=330] (v0); 
\draw [ultra thick, red] (e3) to [out=300, in=210] (v3); 
\draw [ultra thick, red] (v3) to [out=30, in=300] (e0); 
\draw [ultra thick, red] (e0) 
.. controls ($ (e0) + (60:7) $) and ($ (v0) + (60:7) $) .. (v0); 

\draw [ultra thick, blue] (e1) to [out=190, in=90] (r1); 
\draw [ultra thick, green!50!black] (r1) to [out=270, in=170] (v2); 
\draw [ultra thick, green!50!black] (v1) to [out=180, in=60] (r1); 
\draw [ultra thick, blue] (r1)
.. controls ($ (r1) + (240:5) $) and ($ (e3) + (120:2) $) .. (e3); 
\draw [ultra thick, green!50!black] (v0)
.. controls ($ (v0) + (240:3) $) and ($ (r1) + (150:3) $) .. (r1); 
\draw [ultra thick, blue] (r1) to [out=330, in=180] (e2); 

\draw [ultra thick, green!50!black] (v2) to [out=350, in=210] (r2); 
\draw [ultra thick, green!50!black] (r2) to [out=30, in=300] (v4); 
\draw [ultra thick, blue] (e3) to [out=20, in=270] (r2); 
\draw [ultra thick, blue] (r2) to [out=90, in=330] (e1); 
\draw [ultra thick, green!50!black] (v3) to [out=90, in=300] (r2); 
\draw [ultra thick, green!50!black] (r2) to [out=120, in=0] (v1); 
\draw [ultra thick, blue] (e2) to [out=0, in=180] (r2); 
\draw [ultra thick, blue] (r2) to [out=0, in=240] (e0); 

\draw [ultra thick, blue] (r3) to[out=210,in=70] (e1); 
\draw [ultra thick, green!50!black] (v4) to [out=120, in=300] (r3); 
\draw [ultra thick, green!50!black] (r3) to [out=120, in=30] (v0); 
\draw [ultra thick, blue] (e0) to [out=120, in=30] (r3); 

\draw [ultra thick, draw=none] (e3) to [out=270, in=90] (r0); 
\draw [ultra thick, draw=none] (r0)
.. controls ($ (r0) + (270:8) $) and ($ (e0) + (345:10) $) .. (e0); 
\draw [ultra thick, draw=none] (v0)
.. controls ($ (v0) + (150:6) $) and ($ (r0) + (180:15) $) .. (r0); 
\draw [ultra thick, draw=none] (r0) to [out=0, in=270] (v3); 

\draw [ultra thick, blue] (e3) to [out=270, in=90] (r0); 
\draw [ultra thick, blue] (r0)
.. controls ($ (r0) + (270:8) $) and ($ (e0) + (345:10) $) .. (e0); 
\draw [ultra thick, green!50!black] (v0)
.. controls ($ (v0) + (150:6) $) and ($ (r0) + (180:15) $) .. (r0); 
\draw [ultra thick, green!50!black] (r0) to [out=0, in=270] (v3); 

\foreach \x/\word in {(r0)/r0, (r1)/r1, (r2)/r2, (r3)/r3}
{
\draw [red, fill=red] \x circle  (10pt);
}

\foreach \x/\word in {(e0)/e0, (e1)/e1, (e2)/e2, (e3)/e3}
{
\draw [green!50!black, fill=green!50!black] \x circle  (10pt);
}

\foreach \x/\word in {(v0)/v0, (v1)/v1, (v2)/v2, (v3)/v3, (v4)/v4}
{
\draw [blue, fill=blue] \x circle (10pt);
}

\end{tikzpicture}

\begin{tikzpicture}[scale = 0.2]
\coordinate (r3) at (1,6);
\coordinate (e1) at (-1,4); 
\coordinate (v1) at (-1.5, 2);
\coordinate (e2) at (-2,0); 
\coordinate (v2) at (-2.3,-2); 
\coordinate (e3) at (-2.5,-4); 
\coordinate (r0) at (-3,-7); 
\coordinate (v4) at (3,4);
\coordinate (r2) at (2.5,0);
\coordinate (v3) at (4,-3);
\coordinate (e0) at (7,3);
\coordinate (v0) at (-6,6); 
\coordinate (r1) at (-6, 0); 

\draw [ultra thick, blue] (r3) to[out=210,in=70] (e1); 
\draw [ultra thick, blue] (e3) to [out=270, in=90] (r0); 
\draw [ultra thick, blue] (r0)
.. controls ($ (r0) + (270:8) $) and ($ (e0) + (345:10) $) .. (e0); 
\draw [ultra thick, blue] (e1) to [out=190, in=90] (r1); 
\draw [ultra thick, blue] (e3) to [out=20, in=270] (r2); 
\draw [ultra thick, blue] (r2) to [out=90, in=330] (e1); 
\draw [ultra thick, blue] (r1)
.. controls ($ (r1) + (240:5) $) and ($ (e3) + (120:2) $) .. (e3); 
\draw [ultra thick, blue] (e0) to [out=120, in=30] (r3); 

\draw [ultra thick, blue] (r1) to [out=330, in=180] (e2); 
\draw [ultra thick, blue] (e2) to [out=0, in=180] (r2); 
\draw [ultra thick, blue] (r2) to [out=0, in=240] (e0); 

\foreach \x/\word in {(r0)/r0, (r1)/r1, (r2)/r2, (r3)/r3}
{
\draw [red, fill=red] \x circle  (10pt);
}

\foreach \x/\word in {(e0)/e0, (e1)/e1, (e2)/e2, (e3)/e3}
{
\draw [green!50!black, fill=green!50!black] \x circle  (10pt);
}

\foreach \x/\word in {(v0)/v0, (v1)/v1, (v2)/v2, (v3)/v3, (v4)/v4}
{
\draw [blue, fill=blue] \x circle (10pt);
}

\end{tikzpicture}
\begin{tikzpicture}[scale = 0.2]
\coordinate (r3) at (1,6);
\coordinate (e1) at (-1,4); 
\coordinate (v1) at (-1.5, 2);
\coordinate (e2) at (-2,0); 
\coordinate (v2) at (-2.3,-2); 
\coordinate (e3) at (-2.5,-4); 
\coordinate (r0) at (-3,-7); 
\coordinate (v4) at (3,4);
\coordinate (r2) at (2.5,0);
\coordinate (v3) at (4,-3);
\coordinate (e0) at (7,3);
\coordinate (v0) at (-6,6); 
\coordinate (r1) at (-6, 0); 

\draw [ultra thick, green!50!black] (r1) to [out=270, in=170] (v2); 
\draw [ultra thick, green!50!black] (v2) to [out=350, in=210] (r2); 
\draw [ultra thick, green!50!black] (r2) to [out=30, in=300] (v4); 
\draw [ultra thick, green!50!black] (v4) to [out=120, in=300] (r3); 
\draw [ultra thick, green!50!black] (r3) to [out=120, in=30] (v0); 
\draw [ultra thick, green!50!black] (v0)
.. controls ($ (v0) + (150:6) $) and ($ (r0) + (180:15) $) .. (r0); 
\draw [ultra thick, green!50!black] (r0) to [out=0, in=270] (v3); 
\draw [ultra thick, green!50!black] (v3) to [out=90, in=300] (r2); 
\draw [ultra thick, green!50!black] (r2) to [out=120, in=0] (v1); 
\draw [ultra thick, green!50!black] (v1) to [out=180, in=60] (r1); 

\draw [ultra thick, green!50!black] (v0)
.. controls ($ (v0) + (240:3) $) and ($ (r1) + (150:3) $) .. (r1); 

\foreach \x/\word in {(r0)/r0, (r1)/r1, (r2)/r2, (r3)/r3}
{
\draw [red, fill=red] \x circle  (10pt);
}

\foreach \x/\word in {(e0)/e0, (e1)/e1, (e2)/e2, (e3)/e3}
{
\draw [green!50!black, fill=green!50!black] \x circle  (10pt);
}

\foreach \x/\word in {(v0)/v0, (v1)/v1, (v2)/v2, (v3)/v3, (v4)/v4}
{
\draw [blue, fill=blue] \x circle (10pt);
}
\end{tikzpicture}
\begin{tikzpicture}[scale = 0.2]
\coordinate (r3) at (1,6);
\coordinate (e1) at (-1,4); 
\coordinate (v1) at (-1.5, 2);
\coordinate (e2) at (-2,0); 
\coordinate (v2) at (-2.3,-2); 
\coordinate (e3) at (-2.5,-4); 
\coordinate (r0) at (-3,-7); 
\coordinate (v4) at (3,4);
\coordinate (r2) at (2.5,0);
\coordinate (v3) at (4,-3);
\coordinate (e0) at (7,3);
\coordinate (v0) at (-6,6); 
\coordinate (r1) at (-6, 0); 

\draw [ultra thick, red] (e1) to [out = 250, in = 70] (v1); 
\draw [ultra thick, red] (v1) to (e2); 
\draw [ultra thick, red] (e2) to (v2); 
\draw [ultra thick, red] (v2) to (e3); 
\draw [ultra thick, red] (e0) to [out=165, in=0] (v4); 
\draw [ultra thick, red] (v4) to [out=180, in=10] (e1); 
\draw [ultra thick, red] (v0)
.. controls ($ (v0) + (210:8) $) and ($ (e3) + (200:8) $) .. (e3); 
\draw [ultra thick, red] (e1) to [out=150, in=330] (v0); 
\draw [ultra thick, red] (e3) to [out=300, in=210] (v3); 
\draw [ultra thick, red] (v3) to [out=30, in=300] (e0); 

\draw [ultra thick, red] (e0) 
.. controls ($ (e0) + (60:7) $) and ($ (v0) + (60:7) $) .. (v0); 

\foreach \x/\word in {(r0)/r0, (r1)/r1, (r2)/r2, (r3)/r3}
{
\draw [red, fill=red] \x circle  (10pt);
}

\foreach \x/\word in {(e0)/e0, (e1)/e1, (e2)/e2, (e3)/e3}
{
\draw [green!50!black, fill=green!50!black] \x circle  (10pt);
}

\foreach \x/\word in {(v0)/v0, (v1)/v1, (v2)/v2, (v3)/v3, (v4)/v4}
{
\draw [blue, fill=blue] \x circle (10pt);
}

\end{tikzpicture}
\end{center}
\caption{Another trinity, and triple of bipartite plane graphs $G_V, G_E, G_R$, shown together with vertices in their complementary regions.}
\label{fig:triple_of_graphs}
\label{fig:trinity}
\end{figure}
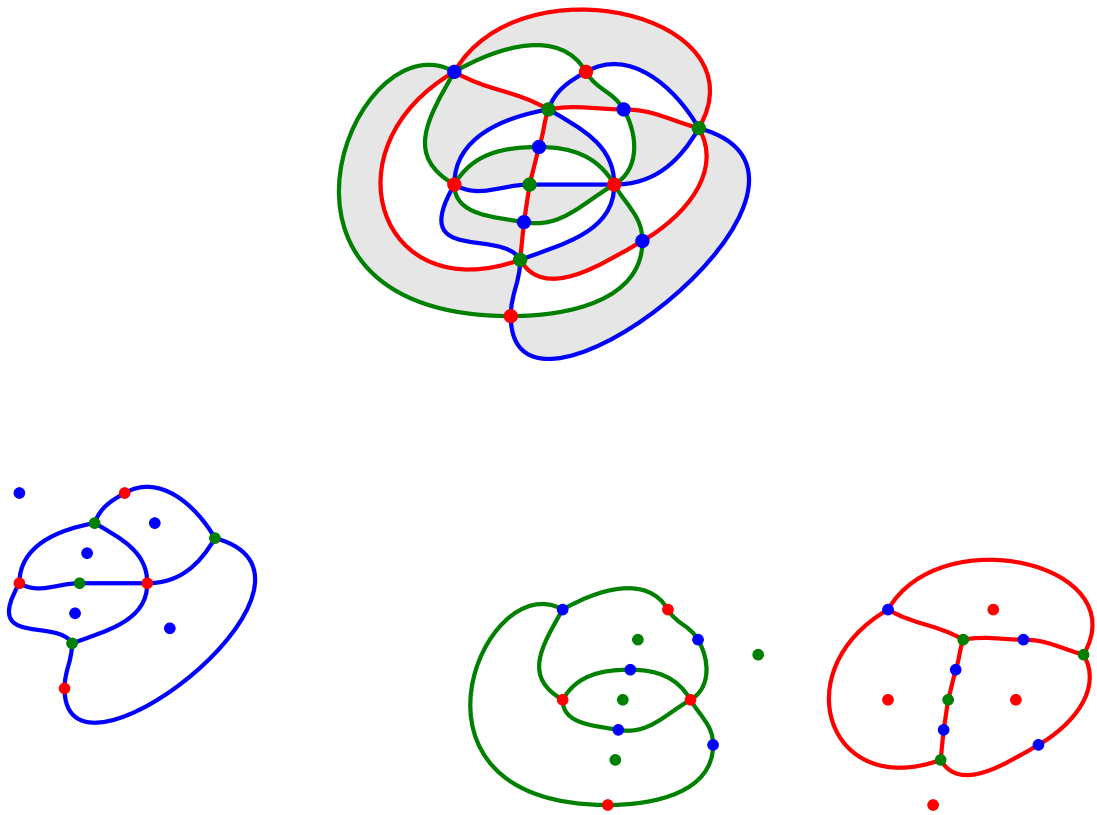

As discussed in section \ref{sec:bipartite_hypergraphs} above, a bipartite graph can be regarded as a hypergraph in two different ways, related by abstract duality. The three bipartite graphs $G_V$, $G_E$, $G_R$ thus yield \emph{six} hypergraphs. Writing $(V,E)$ for the hypergraph with vertices $V$ and hyperedges $E$, wee that $G=G_R$ yields the two hypergraphs $(V,E)$ and $(E,V)$; $G_E$ yields the hypergraphs $(R,V)$ and $(V,R)$; and $G_V$ yields $(E,R)$ and $(R,E)$.

The hypergraphs $(R,E)$ and $(V,E)$ are, in a certain sense, ``planar duals", although in a slightly different sense from the standard construction of section \ref{sec:planar_duals}. The two hypergraphs have the same hyperedges, and the vertices of one correspond to the complementary regions of the other. (However the bipartite graph of $(V,E)$  is $G_R$, while the bipartite graph of $(R,E)$ is $G_V \neq G_R^*$.) See figure \ref{fig:six_hypergraphs}.

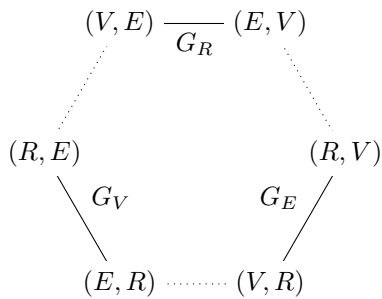
\begin{figure}
\begin{center}
\begin{tikzpicture}
\draw (60:2) -- node[midway, below] {$G_R$} (120:2);
\draw (180:2) -- node[midway, above right] {$G_V$} (240:2);
\draw (-60:2) -- node[midway, above left] {$G_E$} (0:2);
\draw [dotted] (0:2) -- (60:2);
\draw [dotted] (120:2) -- (180:2);
\draw [dotted] (-60:2) -- (-120:2);
\draw (120:2)  node[fill=white] {$(V,E)$};
\draw (60:2) node[fill=white] {$(E,V)$};
\draw (180:2) node[fill=white] {$(R,E)$};
\draw (240:2) node[fill=white] {$(E,R)$};
\draw (-60:2) node[fill=white] {$(V,R)$};
\draw (0:2) node[fill=white] {$(R,V)$};
\end{tikzpicture}
\caption{The six hypergraphs associated to a trinity. Abstract duality is marked by solid lines; ``planar duality" is marked by dotted lines.}
\label{fig:six_hypergraphs}
\end{center}
\end{figure}

\subsection{Planar duals of bipartite plane graphs and arborescences}
\label{sec:planar_duals_bipartite_plane_graphs}

Each of the three bipartite plane graphs $G_V, G_E, G_R$ in a trinity $\Tr$ has a planar dual $G_V^*, G_E^*, G_R^*$. Such a planar dual can be drawn on $\Tr$ so that each edge passes through precisely two triangles, one black and one white. We can therefore naturally orient each edge to run from the black to the white triangle, making $G_V^*, G_E^*, G_R^*$ into \emph{directed} plane graphs. Moreover, around each vertex of $\Tr$, triangles alternate in colour, so at each vertex of $G_V^*$, $G_E^*$ or $G_R^*$, edges are alternately incoming and outgoing. In particular, at every vertex, the in-degree and out-degree are equal. A directed graph where in- and out-degrees are equal at each vertex is called \emph{balanced}.

For the original example of figure \ref{fig:graph_G}, the planar duals $G_V^*, G_E^*, G_R^*$ are shown in figure \ref{fig:easy_trinity_with_dual}. For the example of figure \ref{fig:trinity}, the violet graph $G_V$ and its planar dual $G_V^*$ are shown in figure \ref{fig:trinity_with_dual}.

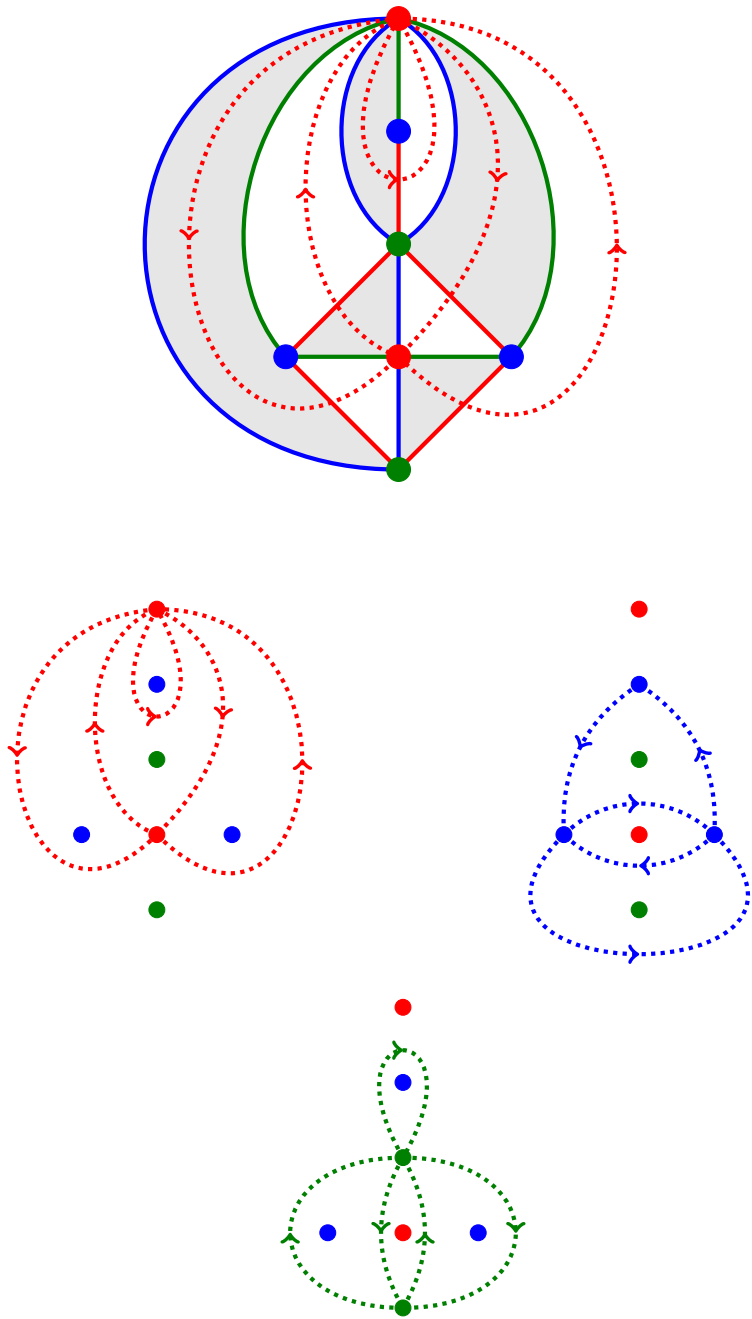
\begin{figure}
\begin{center}
\begin{tikzpicture}[scale = 1.5]
\coordinate (e1) at (0,0);
\coordinate (e2) at (0,-2);
\coordinate (v1) at (0,1);
\coordinate (v2) at (1,-1);
\coordinate (v3) at (-1,-1);
\coordinate (r1) at (0,-1);
\coordinate (r2) at (0,2);

\fill [black!10!white] (r2) to [bend right=60] (e1) -- (v1) -- cycle;
\fill [black!10!white] (r2) .. controls (-3,2) and (-3,-2) .. (e2) -- (v3) to [bend left=60] (r2);
\fill [black!10!white] (r1) -- (e1) -- (v3) -- cycle;
\fill [black!10!white] (r1) -- (e2) -- (v2) -- cycle;
\fill [black!10!white] (r2) to [bend left=60] (e1) -- (v2) to [bend right=60] (r2);

\draw [ultra thick, red] (e1) -- (v1);
\draw [ultra thick, red] (e1) -- (v2);
\draw [ultra thick, red] (e1) -- (v3);
\draw [ultra thick, red] (e2) -- (v2);
\draw [ultra thick, red] (e2) -- (v3);

\draw [ultra thick, blue] (r1) -- (e1);
\draw [ultra thick, blue] (r1) -- (e2);
\draw [ultra thick, blue] (r2) to [bend right=60] (e1); 
\draw [ultra thick, blue] (r2) to [bend left=60] (e1);
\draw [ultra thick, blue] (r2) .. controls (-3,2) and (-3,-2) .. (e2);

\draw [ultra thick, green!50!black] (r1) -- (v2);
\draw [ultra thick, green!50!black] (r1) -- (v3);
\draw [ultra thick, green!50!black] (r2) -- (v1);
\draw [ultra thick, green!50!black] (r2) to [bend left=60] (v2);
\draw [ultra thick, green!50!black] (r2) to [bend right=60] (v3);

\begin{scope}[dotted, ultra thick, decoration={
    markings,
    mark=at position 0.5 with {\arrow{>}}}
    ] 
\draw [red, postaction={decorate}] (r2) .. controls ($ (r2) + (-120:2.2) $) and ($ (r2) + (-60:2.2) $) .. (r2);
\draw [red, postaction={decorate}] (r2) .. controls ($ (r2) + (-25:1) $) and ($ (r1) + (45:2) $) .. (r1);
\draw [red, postaction={decorate}] (r2) .. controls ($ (r2) + (-175:3) $) and ($ (r1) + (-135:2.7) $) .. (r1);
\draw [red, postaction={decorate}] (r1) to [bend left=70] (r2);
\draw [red, postaction={decorate}] (r1) .. controls ($ (r1) + (-45:3) $) and ($ (r2) + (0:3) $) .. (r2);
\end{scope}

\foreach \x in {(e1), (e2)}
{
\draw [green!50!black, fill=green!50!black] \x circle  (3pt);
}

\foreach \x in {(v1), (v2), (v3)}
{
\draw [blue, fill=blue] \x circle  (3pt);
}

\foreach \x in {(r1), (r2)}
{
\draw [red, fill=red] \x circle  (3pt);
}

\end{tikzpicture}

\begin{tikzpicture}[scale = 1]
\coordinate (e1) at (0,0);
\coordinate (e2) at (0,-2);
\coordinate (v1) at (0,1);
\coordinate (v2) at (1,-1);
\coordinate (v3) at (-1,-1);
\coordinate (r1) at (0,-1);
\coordinate (r2) at (0,2);





\begin{scope}[dotted, ultra thick, decoration={
    markings,
    mark=at position 0.5 with {\arrow{>}}}
    ] 
\draw [red, postaction={decorate}] (r2) .. controls ($ (r2) + (-120:2.2) $) and ($ (r2) + (-60:2.2) $) .. (r2);
\draw [red, postaction={decorate}] (r2) .. controls ($ (r2) + (-25:1) $) and ($ (r1) + (45:2) $) .. (r1);
\draw [red, postaction={decorate}] (r2) .. controls ($ (r2) + (-175:3) $) and ($ (r1) + (-135:2.7) $) .. (r1);
\draw [red, postaction={decorate}] (r1) to [bend left=70] (r2);
\draw [red, postaction={decorate}] (r1) .. controls ($ (r1) + (-45:3) $) and ($ (r2) + (0:3) $) .. (r2);
\end{scope}

\foreach \x in {(e1), (e2)}
{
\draw [green!50!black, fill=green!50!black] \x circle  (3pt);
}

\foreach \x in {(v1), (v2), (v3)}
{
\draw [blue, fill=blue] \x circle  (3pt);
}

\foreach \x in {(r1), (r2)}
{
\draw [red, fill=red] \x circle  (3pt);
}

\end{tikzpicture}
\begin{tikzpicture}[scale = 1]
\coordinate (e1) at (0,0);
\coordinate (e2) at (0,-2);
\coordinate (v1) at (0,1);
\coordinate (v2) at (1,-1);
\coordinate (v3) at (-1,-1);
\coordinate (r1) at (0,-1);
\coordinate (r2) at (0,2);





\begin{scope}[dotted, ultra thick, decoration={
    markings,
    mark=at position 0.5 with {\arrow{>}}}
    ] 
\draw [blue, postaction={decorate}] (v1) to [bend right=30] (v3);
\draw [blue, postaction={decorate}] (v2) to [bend right=30] (v1);
\draw [blue, postaction={decorate}] (v3) to [bend left=45] (v2);
\draw [blue, postaction={decorate}] (v2) to [bend left=45] (v3);
\draw [blue, postaction={decorate}] (v3) .. controls ($ (v3) + (-135:3) $) and ($ (v2) + (-45:3) $) .. (v2);
\end{scope}

\foreach \x in {(e1), (e2)}
{
\draw [green!50!black, fill=green!50!black] \x circle  (3pt);
}

\foreach \x in {(v1), (v2), (v3)}
{
\draw [blue, fill=blue] \x circle  (3pt);
}

\foreach \x in {(r1), (r2)}
{
\draw [red, fill=red] \x circle  (3pt);
}

\end{tikzpicture}
\begin{tikzpicture}[scale = 1]
\coordinate (e1) at (0,0);
\coordinate (e2) at (0,-2);
\coordinate (v1) at (0,1);
\coordinate (v2) at (1,-1);
\coordinate (v3) at (-1,-1);
\coordinate (r1) at (0,-1);
\coordinate (r2) at (0,2);





\begin{scope}[dotted, ultra thick, decoration={
    markings,
    mark=at position 0.5 with {\arrow{>}}}
    ] 
\draw [green!50!black, postaction={decorate}] (e1) .. controls ($ (e1) + (120:2.2) $) and ($ (e1) + (60:2.2) $) .. (e1);
\draw [green!50!black, postaction={decorate}] (e1) to [bend right=30] (e2);
\draw [green!50!black, postaction={decorate}] (e2) to [bend right=30] (e1);
\draw [green!50!black, postaction={decorate}] (e1) .. controls ($ (e1) + (0:2) $) and ($ (e2) + (0:2) $) .. (e2);
\draw [green!50!black, postaction={decorate}] (e2) .. controls ($ (e2) + (180:2) $) and ($ (e1) + (180:2) $) .. (e1);

\end{scope}

\foreach \x in {(e1), (e2)}
{
\draw [green!50!black, fill=green!50!black] \x circle  (3pt);
}

\foreach \x in {(v1), (v2), (v3)}
{
\draw [blue, fill=blue] \x circle  (3pt);
}

\foreach \x in {(r1), (r2)}
{
\draw [red, fill=red] \x circle  (3pt);
}

\end{tikzpicture}
\end{center}
\caption{The trinity of figure \ref{fig:easy_trinity}, together with the planar duals of its three bipartite plane graphs. ($G_R^*$ is also shown on the trinity.)}
\label{fig:easy_trinity_with_dual}
\end{figure}

\begin{figure}
\begin{center}
\begin{tikzpicture}[scale = 0.2]
\coordinate (r3) at (1,6);
\coordinate (e1) at (-1,4); 
\coordinate (v1) at (-1.5, 2);
\coordinate (e2) at (-2,0); 
\coordinate (v2) at (-2.3,-2); 
\coordinate (e3) at (-2.5,-4); 
\coordinate (r0) at (-3,-7); 
\coordinate (v4) at (3,4);
\coordinate (r2) at (2.5,0);
\coordinate (v3) at (4,-3);
\coordinate (e0) at (7,3);
\coordinate (v0) at (-6,6); 
\coordinate (r1) at (-6, 0); 

\draw [ultra thick, blue] (r3) to[out=210,in=70] (e1); 
\draw [ultra thick, blue] (e3) to [out=270, in=90] (r0); 
\draw [ultra thick, blue] (r0)
.. controls ($ (r0) + (270:8) $) and ($ (e0) + (345:10) $) .. (e0); 
\draw [ultra thick, blue] (e1) to [out=190, in=90] (r1); 
\draw [ultra thick, blue] (e3) to [out=20, in=270] (r2); 
\draw [ultra thick, blue] (r2) to [out=90, in=330] (e1); 
\draw [ultra thick, blue] (r1)
.. controls ($ (r1) + (240:5) $) and ($ (e3) + (120:2) $) .. (e3); 
\draw [ultra thick, blue] (e0) to [out=120, in=30] (r3); 

\draw [ultra thick, blue] (r1) to [out=330, in=180] (e2); 
\draw [ultra thick, blue] (e2) to [out=0, in=180] (r2); 
\draw [ultra thick, blue] (r2) to [out=0, in=240] (e0); 

\foreach \x/\word in {(r0)/r0, (r1)/r1, (r2)/r2, (r3)/r3}
{
\draw [red, fill=red] \x circle  (10pt);
}

\foreach \x/\word in {(e0)/e0, (e1)/e1, (e2)/e2, (e3)/e3}
{
\draw [green!50!black, fill=green!50!black] \x circle  (10pt);
}

\foreach \x/\word in {(v0)/v0, (v1)/v1, (v2)/v2, (v3)/v3, (v4)/v4}
{
\draw [blue, fill=blue] \x circle (10pt);
}

\end{tikzpicture}
\begin{tikzpicture}[scale = 0.25]
\coordinate (r3) at (1,6);
\coordinate (e1) at (-1,4); 
\coordinate (v1) at (-1.5, 2);
\coordinate (e2) at (-2,0); 
\coordinate (v2) at (-2.3,-2); 
\coordinate (e3) at (-2.5,-4); 
\coordinate (r0) at (-3,-7); 
\coordinate (v4) at (3,4);
\coordinate (r2) at (2.5,0);
\coordinate (v3) at (4,-3);
\coordinate (e0) at (7,3);
\coordinate (v0) at (-6,6); 
\coordinate (r1) at (-6, 0); 

\fill [black!10!white] (e0)
.. controls ($ (e0) + (60:7) $) and ($ (v0) + (60:7) $) .. (v0)
to [out=30, in=120] (r3)
to [out=30, in=120] (e0);

\fill [black!10!white] (r3)
to[out=210,in=70] (e1)
to [out=10, in=180] (v4)
to [out=120, in=300] (r3);

\fill [black!10!white] (v0)
.. controls ($ (v0) + (240:3) $) and ($ (r1) + (150:3) $) .. (r1)
to [out=90, in=190] (e1)
to [out=150, in=330] (v0);

\fill [black!10!white] (e1)
to [out = 250, in = 70] (v1)
to [out=0, in=120] (r2)
to [out=90, in=330] (e1);

\fill [black!10!white] (v1)
to (e2)
to [out=180, in=330] (r1)
to [out=60, in=180] (v1);

\fill [black!10!white] (e2)
to (v2)
to [out=350, in=210] (r2)
to [out=180, in=0] (e2);

\fill [black!10!white] (v2)
to (e3)
.. controls ($ (e3) + (120:2) $)  and ($ (r1) + (240:5) $) .. (r1)
to [out=270, in=170] (v2);

\fill [black!10!white] (v0) 
.. controls ($ (v0) + (210:8) $) and ($ (e3) + (200:8) $) .. (e3)
to [out=270, in=90] (r0)
.. controls ($ (r0) + (180:15) $) and ($ (v0) + (150:6) $) .. (v0);

\fill [black!10!white] (v4)
to [out=0, in=165] (e0)
to [out=240, in=0] (r2)
to [out=30, in=300] (v4);

\fill [black!10!white] (v3) 
to [out=90, in=300] (r2)
to [out=270, in=20] (e3)
to [out=300, in=210] (v3);

\fill [black!10!white] (v3) 
to [out=30, in=300] (e0)
.. controls ($ (e0) + (345:10) $)  and ($ (r0) + (270:8) $) .. (r0)
to [out=0, in=270] (v3);

\begin{scope}[dotted, ultra thick, decoration={
    markings,
    mark=at position 0.5 with {\arrow{>}}}
    ] 
\draw [blue, postaction={decorate}] (v4) to [bend right=10] (v0);
\draw [blue, postaction={decorate}] (v0) to (v1);
\draw [blue, postaction={decorate}] (v1) to (v4);
\draw [blue, postaction={decorate}] (v0) .. controls ($ (v0) + (50:4) $) and ($ (v4) + (45:7) $) .. (v4);
\draw [blue, postaction={decorate}] (v4) to [bend left=30] (v3);
\draw [blue, postaction={decorate}] (v3) to (v2);
\draw [blue, postaction={decorate}] (v2) .. controls ($ (v2) + (225:8) $) and ($ (v0) + (225:6) $) ..  (v0);
\draw [blue, postaction={decorate}] (v0) .. controls ($ (v0) + (205:15) $) and ($ (v3) + (235:11) $) .. (v3);
\draw [blue, postaction={decorate}] (v2) to [bend right=80] (v1);
\draw [blue, postaction={decorate}] (v1) to [bend right=80] (v2);
\draw [blue, postaction={decorate}] (v3) .. controls ($ (v3) + (20:15) $) and ($ (v0) + (70:15) $) .. (v0);
\end{scope}

\draw [ultra thick, blue] (r3) to[out=210,in=70] (e1); 
\draw [ultra thick, red] (e1) to [out = 250, in = 70] (v1); 
\draw [ultra thick, red] (v1) to (e2); 
\draw [ultra thick, red] (e2) to (v2); 
\draw [ultra thick, red] (v2) to (e3); 
\draw [ultra thick, blue] (e3) to [out=270, in=90] (r0); 
\draw [ultra thick, blue] (r0)
.. controls ($ (r0) + (270:8) $) and ($ (e0) + (345:10) $) .. (e0); 
\draw [ultra thick, red] (e0) to [out=165, in=0] (v4); 
\draw [ultra thick, red] (v4) to [out=180, in=10] (e1); 
\draw [ultra thick, blue] (e1) to [out=190, in=90] (r1); 
\draw [ultra thick, green!50!black] (r1) to [out=270, in=170] (v2); 
\draw [ultra thick, green!50!black] (v2) to [out=350, in=210] (r2); 
\draw [ultra thick, green!50!black] (r2) to [out=30, in=300] (v4); 
\draw [ultra thick, green!50!black] (v4) to [out=120, in=300] (r3); 
\draw [ultra thick, green!50!black] (r3) to [out=120, in=30] (v0); 
\draw [ultra thick, red] (v0)
.. controls ($ (v0) + (210:8) $) and ($ (e3) + (200:8) $) .. (e3); 
\draw [ultra thick, blue] (e3) to [out=20, in=270] (r2); 
\draw [ultra thick, blue] (r2) to [out=90, in=330] (e1); 
\draw [ultra thick, red] (e1) to [out=150, in=330] (v0); 
\draw [ultra thick, green!50!black] (v0)
.. controls ($ (v0) + (150:6) $) and ($ (r0) + (180:15) $) .. (r0); 
\draw [ultra thick, green!50!black] (r0) to [out=0, in=270] (v3); 
\draw [ultra thick, green!50!black] (v3) to [out=90, in=300] (r2); 
\draw [ultra thick, green!50!black] (r2) to [out=120, in=0] (v1); 
\draw [ultra thick, green!50!black] (v1) to [out=180, in=60] (r1); 
\draw [ultra thick, blue] (r1)
.. controls ($ (r1) + (240:5) $) and ($ (e3) + (120:2) $) .. (e3); 
\draw [ultra thick, red] (e3) to [out=300, in=210] (v3); 
\draw [ultra thick, red] (v3) to [out=30, in=300] (e0); 
\draw [ultra thick, blue] (e0) to [out=120, in=30] (r3); 

\draw [ultra thick, red] (e0) 
.. controls ($ (e0) + (60:7) $) and ($ (v0) + (60:7) $) .. (v0); 
\draw [ultra thick, green!50!black] (v0)
.. controls ($ (v0) + (240:3) $) and ($ (r1) + (150:3) $) .. (r1); 
\draw [ultra thick, blue] (r1) to [out=330, in=180] (e2); 
\draw [ultra thick, blue] (e2) to [out=0, in=180] (r2); 
\draw [ultra thick, blue] (r2) to [out=0, in=240] (e0); 

\foreach \x/\word in {(r0)/r0, (r1)/r1, (r2)/r2, (r3)/r3}
{
\draw [red, fill=red] \x circle  (10pt);
}

\foreach \x/\word in {(e0)/e0, (e1)/e1, (e2)/e2, (e3)/e3}
{
\draw [green!50!black, fill=green!50!black] \x circle  (10pt);
}

\foreach \x/\word in {(v0)/v0, (v1)/v1, (v2)/v2, (v3)/v3, (v4)/v4}
{
\draw [blue, fill=blue] \x circle (10pt);
}
\end{tikzpicture}
\begin{tikzpicture}[scale = 0.25]
\coordinate (r3) at (1,6);
\coordinate (e1) at (-1,4); 
\coordinate (v1) at (-1.5, 2);
\coordinate (e2) at (-2,0); 
\coordinate (v2) at (-2.3,-2); 
\coordinate (e3) at (-2.5,-4); 
\coordinate (r0) at (-3,-7); 
\coordinate (v4) at (3,4);
\coordinate (r2) at (2.5,0);
\coordinate (v3) at (4,-3);
\coordinate (e0) at (7,3);
\coordinate (v0) at (-6,6); 
\coordinate (r1) at (-6, 0); 

\begin{scope}[dotted, ultra thick, decoration={
    markings,
    mark=at position 0.5 with {\arrow{>}}}
    ] 
\draw [blue, postaction={decorate}] (v4) to [bend right=10] (v0);
\draw [blue, postaction={decorate}] (v0) to (v1);
\draw [blue, postaction={decorate}] (v1) to (v4);
\draw [blue, postaction={decorate}] (v0) .. controls ($ (v0) + (50:4) $) and ($ (v4) + (45:7) $) .. (v4);
\draw [blue, postaction={decorate}] (v4) to [bend left=30] (v3);
\draw [blue, postaction={decorate}] (v3) to (v2);
\draw [blue, postaction={decorate}] (v2) .. controls ($ (v2) + (225:8) $) and ($ (v0) + (225:6) $) ..  (v0);
\draw [blue, postaction={decorate}] (v0) .. controls ($ (v0) + (205:15) $) and ($ (v3) + (235:11) $) .. (v3);
\draw [blue, postaction={decorate}] (v2) to [bend right=80] (v1);
\draw [blue, postaction={decorate}] (v1) to [bend right=80] (v2);
\draw [blue, postaction={decorate}] (v3) .. controls ($ (v3) + (20:15) $) and ($ (v0) + (70:15) $) .. (v0);
\end{scope}

\foreach \x/\word in {(r0)/r0, (r1)/r1, (r2)/r2, (r3)/r3}
{
\draw [red, fill=red] \x circle  (10pt);
}

\foreach \x/\word in {(e0)/e0, (e1)/e1, (e2)/e2, (e3)/e3}
{
\draw [green!50!black, fill=green!50!black] \x circle  (10pt);
}

\foreach \x/\word in {(v0)/v0, (v1)/v1, (v2)/v2, (v3)/v3, (v4)/v4}
{
\draw [blue, fill=blue] \x circle (10pt);
}
\end{tikzpicture}
\end{center}
\caption{The trinity of figure \ref{fig:trinity}, together with its violet graph $G_V$ and its dual.}
\label{fig:trinity_with_dual}
\end{figure}
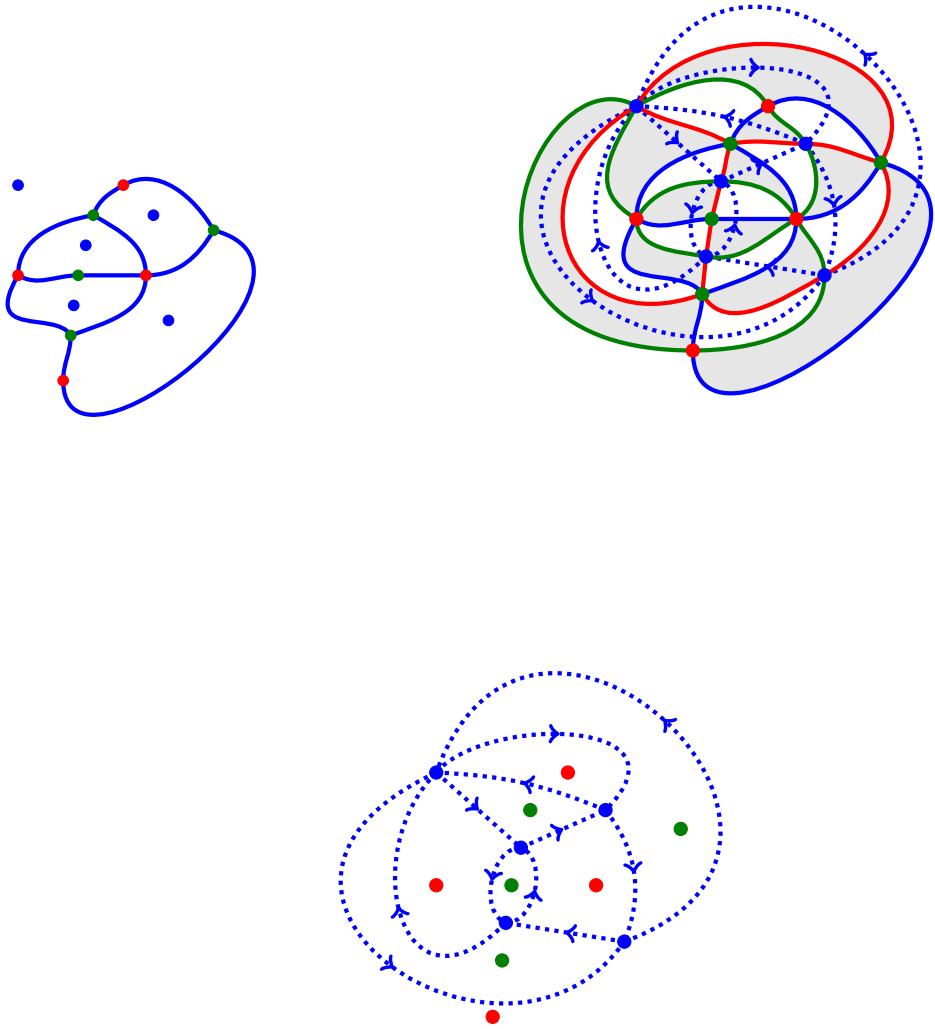

When we have a directed graph such as $G_V^*, G_E^*, G_R^*$, we can consider spanning trees which behave nicely with respect to the orientations on edges. 
\begin{defn}
Let $D$ be a directed graph, with a distinguished \emph{root} vertex $r_0$. An \emph{arborescence} of $D$ rooted at $r_0$ is a subgraph $A$ of $D$ such that
\begin{enumerate}
\item the connected components of $A$ not containing $r_0$ are isolated points, and
\item the connected component of $A$ containing $r_0$ (the \emph{root component}) is a tree in which there is a directed path from $r_0$ to any other vertex.
\end{enumerate}
A \emph{spanning arborescence} is an arborescence without isolated points.
\end{defn}
Thus, an arobrescence is a subgraph which consists of a tree, in which every edge points away from the root vertex, together with isolated vertices. A spanning arborescence is a spanning tree in which every edge points away from the root.

It is known \cite{Tutte48, vA-E_dB51} that the number of spanning arborescences in a balanced finite directed graph $D$ does not depend on the choice of root vertex; hence the following definition makes sense.
\begin{defn}
Let $D$ be a balanced finite directed graph. The number of spanning arborescences in $D$ (for any choice of root vertex) is called the \emph{arborescence number} of $D$ and denoted $\rho(D)$.
\end{defn}

It is interesting to consider spanning arborescences in the balanced directed plane graphs $G_V^*$, $G_E^*$, $G_R^*$ of a trinity. In figure \ref{fig:easy_trinity_with_dual} we observe that an arborescence in $G_R^*$ consists of a single edge from one red vertex to the other; if we fix the root vertex, then the direction of this edge is also fixed. So for either choice of root, there are 2 spanning arborescences, and $\rho(G_R^*) = 2$. It is also not difficult to check that, for this example, that $\rho(G_V^*) = 2$ and $\rho(G_E^*) = 2$ also. This is not a coincidence; it is \emph{Tutte's tree trinity theorem}, which we now state.
\begin{thm}[Tutte \cite{Tutte48}]
\label{thm:Tutte_tree_trinity_thm}
For the three bipartite plane graphs $G_V, G_E, G_R$ of a trinity,
\[
\rho(G_V^*) = \rho(G_E^*) = \rho(G_R^*).
\]
\end{thm}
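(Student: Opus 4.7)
The plan is to reduce Tutte's theorem to the hypertree duality established in Theorem \ref{thm:hypertrees_in_abstract_duals} via the key intermediate identity
\[
\rho(G^*) \;=\; |S_\HH| \;=\; |S_{\overline{\HH}}|
\]
for any bipartite plane graph $G$ with associated abstract dual hypergraphs $\HH, \overline{\HH}$. The second equality is already in hand; the first is the new content.

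I would establish the first equality using the directed matrix-tree theorem of Tutte and van Aardenne-Ehrenfest--de Bruijn, which expresses $\rho(G^*)$ as a principal cofactor of the Laplacian of $G^*$. Applying Cauchy--Binet writes this cofactor as a signed sum over spanning trees of $G^*$, which biject (via classical planar duality) with spanning trees of $G$. Grouping these trees by their hypertree $f_\TT \in \Z_{\geq 0}^V$ and invoking Theorem \ref{thm:trimmed_GP_spanning_tree_polytopes}---which tells us that every element of $S_\HH$ arises from exactly one spanning tree in any fine mixed subdivision of $\P_\HH$---the signed sum should collapse to $|S_\HH|$, provided one verifies that all Cauchy--Binet signs are positive. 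This sign verification relies crucially on the planarity of $G$ together with the black-to-white convention orienting $G^*$.

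Applying this identity to each of the three bipartite plane graphs in the trinity yields
\[
\rho(G_R^*) = |S_{(V,E)}|, \quad \rho(G_V^*) = |S_{(E,R)}|, \quad \rho(G_E^*) = |S_{(R,V)}|,
\]
each paired with its abstract dual by Theorem \ref{thm:hypertrees_in_abstract_duals}. To close the argument and show the three values actually coincide, I would construct a \emph{trinity root polytope} $\QQ_\Tr \subset \R^V \oplus \R^E \oplus \R^R$ whose vertices are indexed by the triangles of $\Tr$, generalising $\QQ_G$ of Section \ref{sec:hypergraphs_polytope_dualities}. A Cayley-trick-style three-fold slicing of $\QQ_\Tr$ should recover each of the three root polytopes $\QQ_{G_V}, \QQ_{G_E}, \QQ_{G_R}$ in turn; then, by the argument of Theorem \ref{thm:cayley_trick}, the number of top-dimensional simplices in any triangulation of $\QQ_\Tr$ is a common value realising all three arborescence numbers.

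The main obstacle is the sign analysis in the matrix-tree step: the identity $\rho(G^*) = |S_\HH|$ genuinely requires planarity and fails for arbitrary bipartite graphs, so the argument must use the plane embedding in an essential way. The secondary obstacle is setting up the trinity polytope $\QQ_\Tr$ precisely enough that its triangulations slice compatibly in all three coordinate directions; this is expected to be natural but technically delicate. Once both ingredients are in place, the theorem follows immediately.
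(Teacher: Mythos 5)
Your route is entirely different from the paper's, and unfortunately it has a genuine gap that prevents it from closing.

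The paper's proof is a direct combinatorial bijection: a spanning arborescence of $G_R^*$ is used to build a \emph{Tutte matching} (a bijection from non-root vertices of $\Tr$ to non-outer white triangles sending each vertex to an adjacent triangle), by repeatedly peeling leaves off the dual spanning tree $A^*$ of $G_R$; conversely every Tutte matching splits into three arborescences, one each in $G_V^*, G_E^*, G_R^*$. Hence each of the three arborescence numbers equals $|\M_\Tr|$. No polytopes or determinants are involved.

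Your plan has two weak points. First, the identity $\rho(G^*) = |S_\HH|$ is indeed true (it is Theorem~\ref{thm:Kalman_hypertree_arborescence}, proved later in the paper), but the paper's proof is a bijection between arborescences of $G^*$ and hypertrees of $\HH$, not a matrix-tree/Cauchy--Binet computation. Your proposed determinant route is not just a sign check: the directed matrix-tree theorem gives $\rho(G^*)$ as a cofactor of the directed Laplacian, which does \emph{not} factor as $BB^T$, so Cauchy--Binet does not directly apply, and the grouping of spanning trees by hypertree with all-but-one-per-class cancellation is a substantial claim that would need its own argument. Second, and more fundamentally, even granting $\rho(G^*) = |S_\HH|$, you obtain only the three \emph{separate} pairs
\[
\rho(G_R^*) = |S_{(V,E)}| = |S_{(E,V)}|,\quad
\rho(G_V^*) = |S_{(E,R)}| = |S_{(R,E)}|,\quad
\rho(G_E^*) = |S_{(R,V)}| = |S_{(V,R)}|,
\]
linked only by abstract duality (Theorem~\ref{thm:hypertrees_in_abstract_duals}). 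Nothing established so far in the paper connects, say, $|S_{(E,V)}|$ to $|S_{(E,R)}|$; the cross-colour equality of all six hypertree counts is exactly what Tutte's theorem contributes in the paper's logical order, so invoking it here would be circular. Your proposed ``trinity root polytope'' $\QQ_\Tr$ is meant to supply this missing input, but it is not constructed, and the obstacles are real: the three root polytopes $\QQ_{G_V}, \QQ_{G_E}, \QQ_{G_R}$ generally have different dimensions, and even if a common polytope with all three as slices existed, a triangulation of a polytope does not in general restrict to triangulations of lower-dimensional slices, let alone to ones with the same number of top-dimensional cells. Until $\QQ_\Tr$ is built and these compatibility properties proved, the argument does not close. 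You would do better to argue directly at the level of spanning trees and matchings, as the paper does.
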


There is a beautiful proof of this result, due to Tutte, illustrating the triality relationship between the three graphs. We refer to \cite{Tutte75} and \cite[thm. 9.9]{Kalman13_Tutte} for details, and sketch the proof here.

First however we establish some notation and conventions. Let $n$ denote the number of white triangles in the trinity $\Tr$ . Each edge of $\Tr$ has a black triangle on one side and a white triangle on the other, and each triangle has its three sides of distinct colours. Thus the map which sends each red edge to the adjacent white triangle is a bijection between red edges and white triangles. It follows that the number of red edges is $n$. Similarly there are $n$ violet and $n$ emerald edges; and similarly again, the number of black triangles is also $n$. 

Thus, $\Tr$ is a triangulation of the sphere with $3n$ edges and $2n$ triangular faces, so $|V| + |E| + |R| - 3n + 2n = 2$. Fix a white triangle as the \emph{root} or \emph{outer} triangle, and fix its vertices as the \emph{root} violet, emerald, and red vertices. Then there are $n-1$ non-outer white triangles, and $|V|+|E|+|R|-3=n-1$ non-root vertices altogether (of all colours). In our examples, we will always take the root triangle to be the exterior region, hence the name \emph{outer}.

\begin{proof}[Proof sketch of theorem \ref{thm:Tutte_tree_trinity_thm}]
Given a spanning arborescence $A$ of $G_R^*$, we can obtain a map from non-root red vertices to white triangles. Each non-root red vertex $r$ has precisely one edge of $A$ pointing into it; this edge of $A$ enters $r$ via an adjacent white triangle $t$. Our map assigns $r$ to $t$. In a similar fashion, a spanning arborescence in $G_V^*$ or $G_E^*$ yields a map from non-root vertices of the appropriate colour to adjacent white triangles.

The key step in the proof is that, given a spanning arborescence $A$ of $G_R^*$, there exist unique spanning arborescences of $G_V^*$ and $G_E^*$  (with respect to the root vertices around the outer triangle), such that the union of the maps obtained forms a bijection from non-root vertices of $\Tr$, to white triangles of $\Tr$. Moreover, every bijection from non-root vertices to white triangles which assigns vertices to adjacent triangles yields such a triple of arborescences.

From $A$ (and the associated assignment of red vertices to white triangles), we construct the spanning arborescences of $G_V^*$ and $G_E^*$ explicitly. The edges of $G_R$ whose duals are not included in $A$ form a \emph{spanning tree} $A^*$ of $G_R$. (See figure \ref{fig:spanning_arborescences_with_duals} for an illustration of this phenomenon.)
Adjacent to each non-root degree-1 vertex $v$ (i.e. leaf) of $A^*$ there is a unique white triangle $t$ not assigned yet; we assign $v$ to $t$. Deleting the leaf $v$ from $A^*$ and repeating the process eventually yields a bijection from non-root vertices to adjacent white triangles. One can show that violet vertices are sent to adjacent white triangles so as to yield an arborescence of $G_V^*$, and emerald vertices are sent to adjacent white triangles so as to yield an arborescence of $G_E^*$. See figure \ref{fig:easy_trinity_arb_matching} for an illustration of this construction.

There is of course nothing special about starting with $G_R$. So the arborescence numbers $\rho(G_V^*)$, $\rho(G_E^*)$, $\rho(G_R^*)$ are all equal to the number of bijections discussed above.
\end{proof}

\begin{figure}
\begin{center}
\begin{tikzpicture}[scale = 1.5]
\coordinate (e1) at (0,0);
\coordinate (e2) at (0,-2);
\coordinate (v1) at (0,1);
\coordinate (v2) at (1,-1);
\coordinate (v3) at (-1,-1);
\coordinate (r1) at (0,-1);
\coordinate (r2) at (0,2);

\fill [black!10!white] (r2) to [bend right=60] (e1) -- (v1) -- cycle;
\fill [black!10!white] (r2) .. controls (-3,2) and (-3,-2) .. (e2) -- (v3) to [bend left=60] (r2);
\fill [black!10!white] (r1) -- (e1) -- (v3) -- cycle;
\fill [black!10!white] (r1) -- (e2) -- (v2) -- cycle;
\fill [black!10!white] (r2) to [bend left=60] (e1) -- (v2) to [bend right=60] (r2);

\draw [ultra thick, red] (e1) -- (v1);
\draw [red] (e1) -- (v2);
\draw [ultra thick, red] (e1) -- (v3);
\draw [ultra thick, red] (e2) -- (v2);
\draw [ultra thick, red] (e2) -- (v3);

\draw [blue] (r1) -- (e1);
\draw [blue] (r1) -- (e2);
\draw [blue] (r2) to [bend right=60] (e1); 
\draw [blue] (r2) to [bend left=60] (e1);
\draw [blue] (r2) .. controls (-3,2) and (-3,-2) .. (e2);

\draw [green!50!black] (r1) -- (v2);
\draw [green!50!black] (r1) -- (v3);
\draw [green!50!black] (r2) -- (v1);
\draw [green!50!black] (r2) to [bend left=60] (v2);
\draw [green!50!black] (r2) to [bend right=60] (v3);

\begin{scope}[dotted, ultra thick, decoration={
    markings,
    mark=at position 0.5 with {\arrow{>}}}
    ] 
\draw [red, postaction={decorate}] (r2) .. controls ($ (r2) + (-25:1) $) and ($ (r1) + (45:2) $) .. (r1);
\end{scope}

\draw [black, fill=black] ($ (r1) + (45:0.3) $) circle (2pt);
\draw [black, fill=black] ($ (v1) + (0:0.3) $) circle (2pt);
\draw [black, fill=black] ($ (v3) + (-22.5:0.3) $) circle (2pt);
\draw [black, fill=black] ($ (e1) + (180:0.3) $) circle (2pt);

\foreach \x in {(e1), (e2)}
{
\draw [green!50!black, fill=green!50!black] \x circle  (3pt);
}

\foreach \x in {(v1), (v2), (v3)}
{
\draw [blue, fill=blue] \x circle  (3pt);
}

\foreach \x in {(r1), (r2)}
{
\draw [red, fill=red] \x circle  (3pt);
}

\end{tikzpicture}
\end{center}
\caption{A Tutte matching on the trinity of figure \ref{fig:easy_trinity}. The matching can be constructed from the arborescence $A$ of $G_R^*$ shown in dotted red, which is dual to the spanning tree $A^*$ of $G_R$ shown in heavy red. The assignments of the matching are indicated by black dots.}
\label{fig:easy_trinity_arb_matching}
\end{figure}
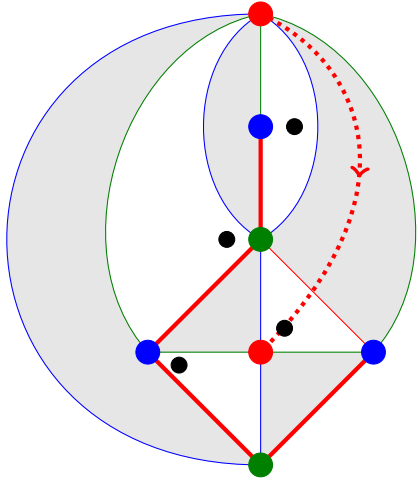

The bijections considered in the above proof are useful to consider and so we make the following definition, following \cite{Kalman13_Tutte}.
\begin{defn}
Let $\Tr$ be a trinity, with a distinguished outer triangle, whose vertices are root vertices of the three colours. A \emph{Tutte matching} on $\Tr$ is a bijection from the set of  non-root points of $\Tr$, to non-outer white triangles of $\Tr$, such that every vertex is sent to an adjacent triangle. The set of Tutte matchings on $\mathfrak{T}$ is denoted $\M_{\mathfrak{T}}$.
\end{defn}

The proof of theorem \ref{thm:Tutte_tree_trinity_thm} shows that $\rho(G_V^*) = \rho(G_E^*) = \rho(G_R^`*) = |\M_{\mathfrak{T}}|$.

\subsection{Tutte matchings and Berman's theorem}

Tutte matchings also arise as the terms in the determinant of a certain adjacency matrix. Given a trinity $\mathfrak{T}$ as above, let us form the \emph{adjacency matrix} $M_\Tr$ as follows. Its rows correspond to the non-root vertices of $\Tr$, and its columns correspond to the non-outer white triangles of $\Tr$. For a non-root vertex $v$ and non-outer white triangle $t$ of $\Tr$, the $(v,t)$ entry of $M_\Tr$ is $1$, if $v$ is adjacent to $t$, otherwise it is $0$. The ordering of the non-root vertices in the rows is arbitrary, as is the ordering of the non-outer white triangles in the columns; so $M_\Tr$ is only well-defined up to permutations of the rows and columns. Nonetheless, its determinant is well-defined up to sign.

For example, in the trinity $\Tr$ of figure \ref{fig:easy_trinity} (derived from the bipartite plane graph of figure \ref{fig:graph_G}), the adjacency matrix is as follows. We show the labels on vertices and white triangles in figure \ref{fig:easy_trinity_with_labels}

\[
M_\Tr =
\bordermatrix{
													& t_1 & t_2 & t_3 & t_4 \cr
\text{\scriptsize{Red } } \widetilde{1}  & 0 	& 	1 & 	0 & 1  \cr
\text{\scriptsize{Emerald } } 1          & 1 	& 	0 & 	1 & 1  \cr
\text{\scriptsize{Violet } } \overline{1} &0 	& 	0 & 	1 & 0 \cr
\text{\scriptsize{Violet } } \overline{3} &1 	& 	1 & 	0 & 0 
}
\]

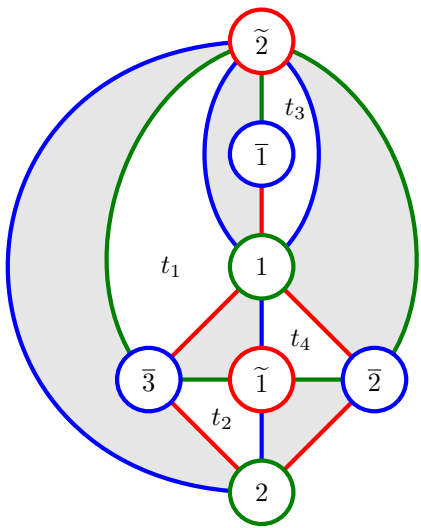
\begin{figure}
\begin{center}
\begin{tikzpicture}[scale = 1.5]
\coordinate (e1) at (0,0);
\coordinate (e2) at (0,-2);
\coordinate (v1) at (0,1);
\coordinate (v2) at (1,-1);
\coordinate (v3) at (-1,-1);
\coordinate (r1) at (0,-1);
\coordinate (r2) at (0,2);

\fill [black!10!white] (r2) to [bend right=60] (e1) -- (v1) -- cycle;
\fill [black!10!white] (r2) .. controls (-3,2) and (-3,-2) .. (e2) -- (v3) to [bend left=60] (r2);
\fill [black!10!white] (r1) -- (e1) -- (v3) -- cycle;
\fill [black!10!white] (r1) -- (e2) -- (v2) -- cycle;
\fill [black!10!white] (r2) to [bend left=60] (e1) -- (v2) to [bend right=60] (r2);

\draw (-0.8,0) node {$t_1$};
\draw (-0.35,-1.35) node {$t_2$};
\draw (0.3,1.4) node {$t_3$};
\draw (0.35,-0.65) node {$t_4$};

\draw [ultra thick, red] (e1) -- (v1);
\draw [ultra thick, red] (e1) -- (v2);
\draw [ultra thick, red] (e1) -- (v3);
\draw [ultra thick, red] (e2) -- (v2);
\draw [ultra thick, red] (e2) -- (v3);

\draw [ultra thick, blue] (r1) -- (e1);
\draw [ultra thick, blue] (r1) -- (e2);
\draw [ultra thick, blue] (r2) to [bend right=60] (e1); 
\draw [ultra thick, blue] (r2) to [bend left=60] (e1);
\draw [ultra thick, blue] (r2) .. controls (-3,2) and (-3,-2) .. (e2);

\draw [ultra thick, green!50!black] (r1) -- (v2);
\draw [ultra thick, green!50!black] (r1) -- (v3);
\draw [ultra thick, green!50!black] (r2) -- (v1);
\draw [ultra thick, green!50!black] (r2) to [bend left=60] (v2);
\draw [ultra thick, green!50!black] (r2) to [bend right=60] (v3);

\foreach \x/\word in {(e1)/{$1$}, (e2)/{$2$}}
{
\draw [green!50!black, ultra thick, fill=white] \x circle  (8pt);
\draw \x node {$\word$};
}

\foreach \x/\word in {(r1)/{$\widetilde{1}$}, (r2)/{$\widetilde{2}$}}
{
\draw [red, ultra thick, fill=white] \x circle  (8pt);
\draw \x node {\word};
}

\foreach \x/\word in {(v1)/{$\overline{1}$}, (v2)/{$\overline{2}$}, (v3)/{$\overline{3}$}}
{
\draw [blue, ultra thick, fill=white] \x circle  (8pt);
\draw \x node {\word}; gherkin
}
\end{tikzpicture}
\end{center}
\caption{The trinity of figure \ref{fig:easy_trinity}, with labellings of vertices and non-outer white triangles.}
\label{fig:easy_trinity_with_labels}
\end{figure}

It can easily be seen, expanding out the determinant in a naive fashion, that there are two terms, both contributing $+1$ to the determinant, so the determinant is $2$. These terms correspond to the matchings
\[
\begin{array}{c}
\tilde{1} \leftrightarrow t_2 \\
1 \leftrightarrow t_4 \\
\overline{1} \leftrightarrow t_3 \\
\overline{3} \leftrightarrow t_1
\end{array}
\quad 
\text{and} 
\quad
\begin{array}{c}
\tilde{1} \leftrightarrow t_4 \\
1 \leftrightarrow t_1 \\
\overline{1} \leftrightarrow t_3 \\
\overline{3} \leftrightarrow t_2 .
\end{array}
\]
which correspond to the two spanning arborescences of $G_V^*$ (or $G_E^*$ or $G_R^*$). The second matching is the one illustrated in figure \ref{fig:easy_trinity_arb_matching}

The terms in an expansion of $\det M_\Tr$ correspond to Tutte matchings, and so we obtain a $\pm 1$ for each Tutte matching. As it turns out, each $\pm 1$ always has the same sign: 
Berman in \cite{Berman80} showed that the determinant always gives the number of Tutte matchings, giving the following theorem.
\begin{thm}[Berman \cite{Berman80}]
\label{thm:Berman_determinant}
The adjacency matrix $M_\Tr$ of a trinity $\Tr$ satisfies
\[
| \det M_{\mathfrak{T}} | = \rho(G_V^*) = \rho(G_E^*) = \rho(G_R^*) = | \M_{\mathfrak{T}} |.
\]
\qed
\end{thm}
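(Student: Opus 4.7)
The plan is to expand $\det M_\Tr$ by the Leibniz formula, observe that the nonzero terms are in bijection with Tutte matchings, and then show that all of them contribute with the same sign, so that the determinant counts Tutte matchings up to an overall sign.

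First I would write
\[
\det M_\Tr \;=\; \sum_{\sigma} \mathrm{sgn}(\sigma) \prod_v M_\Tr\bigl(v, \sigma(v)\bigr),
\]
where $\sigma$ ranges over bijections from the set of non-root vertices to the set of non-outer white triangles, with the orderings of rows and columns fixed once and for all. Each product vanishes unless $\sigma(v)$ is adjacent to $v$ for every $v$, i.e.\ unless $\sigma$ is a Tutte matching. Hence
\[
\det M_\Tr \;=\; \sum_{\mu \in \M_\Tr} \epsilon(\mu), \qquad \epsilon(\mu) \in \{\pm 1\},
\]
and the theorem reduces to showing that $\epsilon$ is constant on $\M_\Tr$; combined with Tutte's tree trinity theorem (Theorem \ref{thm:Tutte_tree_trinity_thm}) this yields the chain of equalities.

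To compare two Tutte matchings $\mu_1, \mu_2$, consider the permutation $\tau = \mu_2^{-1} \circ \mu_1$ of the non-root vertices. Its sign equals $\epsilon(\mu_1)/\epsilon(\mu_2)$, so it suffices to prove $\mathrm{sgn}(\tau) = +1$. Decompose $\tau$ into disjoint cycles; a cycle of length $k$ corresponds to a closed alternating sequence $v_1, t_1, v_2, t_2, \ldots, v_k, t_k$ of vertices and non-outer white triangles in which $\mu_1(v_i) = t_i$ and $\mu_2(v_i) = t_{i-1}$ (indices mod $k$), with each $v_i$ incident to both $t_{i-1}$ and $t_i$. Geometrically this is a closed curve in $\Tr \subset S^2$ threading alternately through vertices and white triangles. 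Each such cycle contributes a factor of $(-1)^{k-1}$ to $\mathrm{sgn}(\tau)$, so the claim reduces to showing that every such cycle has odd length.

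The hard part will be this parity statement, and it is where the planarity and the three-colour structure of $\Tr$ must be used essentially (in particular it is not purely combinatorial: a trinity structure is required, otherwise a generic bipartite incidence matrix of this shape can have cancellation). The approach I would try is topological: the closed curve bounds a disk $D$ in $S^2$ (taking, say, the component not containing the outer triangle), and I would perform an induction on the number of triangles inside $D$. Peeling off a black triangle adjacent to the boundary from the interior, or applying an elementary deformation of the cycle across a black triangle, changes $k$ by $\pm 2$, hence preserves the parity of $k$; the base case, when $D$ contains a single black triangle, gives $k = 3$, which is odd. Counting vertices of each colour and triangles of each colour inside $D$ via Euler characteristic, together with the fact that every triangle has exactly one vertex of each colour, should give the necessary bookkeeping to run the induction. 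An alternative route, should this prove awkward, is to use the explicit correspondence between Tutte matchings and spanning arborescences sketched in the proof of Theorem \ref{thm:Tutte_tree_trinity_thm}: any two spanning arborescences of $G_R^*$ can be joined by elementary swaps, and one can track directly how such a swap modifies the associated matching and hence $\epsilon$.
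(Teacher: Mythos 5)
The paper does not actually prove this theorem: it states it with a \verb|\qed| as a citation to Berman's paper \cite{Berman80}, after informally observing (as you do) that the nonzero Leibniz terms of $\det M_\Tr$ are in bijection with Tutte matchings and asserting that their signs agree. So there is no proof in the paper to compare against, and your task is really to supply Berman's argument.

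Your framing is correct and efficient: the Leibniz expansion picks out Tutte matchings, and constancy of the sign reduces to showing $\mathrm{sgn}(\tau)=+1$ for $\tau=\mu_2^{-1}\circ\mu_1$, whose cycles are exactly closed alternating walks $v_1,t_1,\dots,v_k,t_k$ through non-root vertices and non-outer white triangles. But the heart of the theorem — that such a cycle always has odd length — is left unproved, and the sketch of the induction is not sound as stated. If you deform the arc of $\gamma$ inside $t_i$ (running from $v_i$ to $v_{i+1}$) across the black triangle $b$ that shares the edge $v_iv_{i+1}$ with $t_i$, then $v_i,t_i,v_{i+1}$ gets replaced by $v_i,s_1,z,s_2,v_{i+1}$, where $z$ is the third vertex of $b$ and $s_1,s_2$ are the other two white triangles adjacent to $b$. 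That changes the cycle length by $+1$, not $\pm 2$, so the move you describe flips the parity of $k$ rather than preserving it. You either need a different elementary move (perhaps crossing a black–white pair), a global Euler-characteristic count that pins down $k\bmod 2$ directly, or the arborescence-swap route — but in each case the details still have to be worked out, and you should expect them to be genuinely fiddly: the curve $\gamma$ passes through interiors of triangles rather than along edges of $\Tr$, so the restriction of the triangulation to the disc $D$ is not itself a triangulation of $D$, and third vertices $w_i$ of the $t_i$ can coincide with other $v_j$ on the boundary, which breaks the naive bookkeeping.

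One further point you should make explicit, because it controls what any correct argument must use: the parity claim fails for general alternating cycles in a trinity, not just conjecturally. In the octahedral trinity (poles red, equator alternately violet/emerald), the equator is a simple closed alternating walk through four vertices and four white triangles, giving $k=4$. What saves the theorem is that this particular cycle always passes through at least one root vertex for any choice of outer white triangle, so it never arises as a cycle of $\tau$. Your choice of the disc $D$ not containing the outer triangle is therefore essential and must enter the argument quantitatively, not just as a normalisation; an induction that only ever looks at local black triangles without reference to the root data cannot close. As written, the proposal is a reasonable plan but does not yet constitute a proof of Berman's theorem.
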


Because so many quantities associated to a trinity are equal to the same number --- and we will shortly see more --- we will give this number a name.
\begin{defn}
For a trinity $\mathfrak{T}$, the common arborescence number of $G_V^*, G_E^*, G_R^*$ is called the \emph{magic number} of $\mathfrak{T}$.
\end{defn}

\section{Trinities, arborescences and polytopes}
\label{sec:trinities}

\subsection{The story so far}
\label{sec:arborescences_hypertrees}

Let us briefly recap what we have found so far. In section \ref{sec:hypergraphs_polytope_dualities} we saw that, associated to a bipartite graph $G$ and the corresponding abstract dual hypergraphs $\HH, \overline{\HH}$, are several polytopes: the GP polytope $\P_\HH$, trimmed GP polytope $\P_\HH^-$, hypertree polytope $\S_\HH$, and root polytope $\QQ_\HH$. These polytopes obey various interesting properties, such as: $\P_\HH^- = \S_{\overline{\HH}}$; $\P_\HH$ and $\P_{\overline{\HH}}$ can be obtained as slices of $\QQ_G$; and $| S_\HH | = | S_{\overline{\HH}} |$.

Then, in section \ref{sec:plane_graphs_dualities_trinities}, we considered \emph{plane} bipartite graphs $G$, which naturally give rise to trinities $\Tr$ and triples $G_V, G_E, G_R = G$ in a relationship of triality: the planar duals $G_V^*, G_E^*, G_R^*$ all have the same arborescence number, which is the same as the number of Tutte matchings $|\M_\Tr|$, and also equal to $|\det M_\Tr|$, where $M_\Tr$ is an adjacency matrix.

Let us now combine these ideas and consider the polytopes associated to the graphs of a trinity. We begin by taking up the example of section \ref{sec:polytope_example}.

\subsection{Back to the example}
\label{sec:more_example}

We return to the bipartite plane graph $G$ of figure \ref{fig:graph_G}, which yields the trinity $\Tr$ and bipartite plane graphs $G_V, G_E, G_R=G$ of figure \ref{fig:easy_trinity}, together with the six corresponding hypergraphs $(V,E)$, $(E,V)$, $(R,V)$, $(V,R)$, $(E,R)$, $(R,E)$. Various aspects of $\Tr$ were illustrated throughout section \ref{sec:plane_graphs_dualities_trinities}, in figures \ref{fig:easy_trinity_with_dual} (dual graphs $G_V^*, G_E^*, G_R^*$), \ref{fig:easy_trinity_arb_matching} (Tutte matching and arborescence) and \ref{fig:easy_trinity_with_labels} (full labelling).

In section \ref{sec:polytope_example} we calculated the polytopes associated to $G=G_R$. We denoted blue/violet vertices by $V$ and green/emerald vertices by $U$; we now write $E$ for the emerald vertices. We denoted the two hypergraphs of $G=G_R$ by $\HH$ and $\overline{\HH}$; we now recognise these hypergraphs as $\HH = (V,E)$ and $\overline{\HH} = (E,V)$. We found that the hypergraphs $(V,E)$ and $(E,V)$ each have 2 hypertrees, which are the lattice points $S_{(V,E)}, S_{(E,V)}$ of the polytopes $\S_{(V,E)} = \P_{(E,V)}^-$ and $\S_{(E,V)}= \P_{(V,E)}^-$ respectively.

Let us then turn to the two other bipartite graphs of the trinity, $G_V$ and $G_E$; see figure \ref{fig:easy_green_graph_big_nodes}. We observe that $G_E$ is isomorphic to $G_R$ as a bipartite graph, and we have isomorphisms of hypergraphs $(V,R) = (V,E)$ and $(R,V) = (E,V)$. So the polytopes associated to $G_E$ are isomorphic to those of $G_R$; in particular,
\begin{align*}
\P_{(V,R)} &= 
\Conv\{ (1,1,0), (1,0,1), (0,2,0), (0,0,2) \} \subset \R^V, \\
\P_{(V,R)}^- = \S_{(R,V)} &= 
\Conv\{ (0,1,0),(0,0,1) \} \subset \R^V, \\
\P_{(R,V)} &= 
\Conv \{ (0,3),(2,1) \} \subset \R^R, \\
\P_{(R,V)}^- = \S_{(V,R)} &= 
\Conv\{ (0,2), (1,1) \} \subset \R^R, \\
\end{align*}

\begin{figure}
\begin{center}
\begin{tikzpicture}[scale = 1]
\coordinate (e1) at (0,0);
\coordinate (e2) at (0,-2);
\coordinate (v1) at (0,1);
\coordinate (v2) at (1,-1);
\coordinate (v3) at (-1,-1);
\coordinate (r1) at (0,-1);
\coordinate (r2) at (0,2);

\draw [ultra thick, blue] (r1) -- (e1);
\draw [ultra thick, blue] (r1) -- (e2);
\draw [ultra thick, blue] (r2) to [bend right=60] (e1); 
\draw [ultra thick, blue] (r2) to [bend left=60] (e1);
\draw [ultra thick, blue] (r2) .. controls (-3,2) and (-3,-2) .. (e2);

\foreach \x/\word in {(e1)/{$1$}, (e2)/{$2$}}
{
\draw [green!50!black, ultra thick, fill=white] \x circle  (8pt);
\draw \x node {\word};
}


\foreach \x/\word in {(r1)/{$\widetilde{1}$}, (r2)/{$\widetilde{2}$}}
{
\draw [red, ultra thick, fill=white] \x circle  (8pt);
\draw \x node {\word};
}

\end{tikzpicture}
\begin{tikzpicture}[scale = 1]
\coordinate (e1) at (0,0); 
\coordinate (e2) at (0,-2);
\coordinate (v1) at (0,1);
\coordinate (v2) at (1,-1);
\coordinate (v3) at (-1,-1);
\coordinate (r1) at (0,-1);
\coordinate (r2) at (0,2);

\draw [ultra thick, green!50!black] (r1) -- (v2);
\draw [ultra thick, green!50!black] (r1) -- (v3);
\draw [ultra thick, green!50!black] (r2) -- (v1);
\draw [ultra thick, green!50!black] (r2) to [bend left=60] (v2);
\draw [ultra thick, green!50!black] (r2) to [bend right=60] (v3);

\foreach \x/\word in {(r1)/{$\widetilde{1}$}, (r2)/{$\widetilde{2}$}}
{
\draw [red, ultra thick, fill=white] \x circle  (8pt);
\draw \x node {\word};
}

\foreach \x/\word in {(v1)/{$\overline{1}$}, (v2)/{$\overline{2}$}, (v3)/{$\overline{3}$}}
{
\draw [blue, ultra thick, fill=white] \x circle  (8pt);
\draw \x node {\word};
}
\end{tikzpicture}
\end{center}
\caption{The violet and emerald graphs of the trinity in figure \ref{fig:easy_trinity}, derived from the graph in figure \ref{fig:graph_G}, with vertices labelled.}
\label{fig:easy_green_graph_big_nodes}
\end{figure}
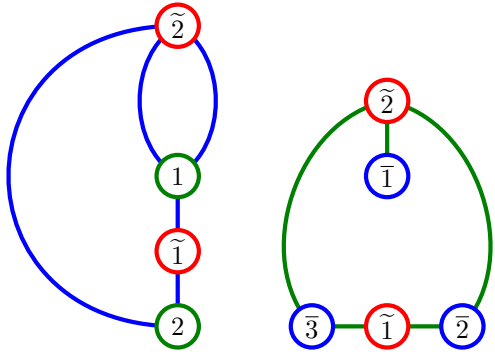

We next calculate the polytopes of $G_V$. First consider the hypergraph $(E,R)$. Its GP polytopes are
\begin{align*}
\P_{(E,R)} &= \Delta_{12} + \Delta_{12} = 2 \Delta_{12} = \Conv \{ (2,0),(0,2) \} \subset \R^E, \\
\P_{(E,R)}^- &= \P_{(E,R)} - \Delta_{12} = \Delta_{12} = \Conv \{(1,0),(0,1) \} \subset \R^E,
\end{align*}
and its spanning trees have $R$-degrees $(1,2)$ and $(2,1)$, with hypertrees $(0,1)$ and $(1,0)$, so
\[
\S_{(E,R)} = \Conv \{ (1,0),(0,1) \} \subset \R^R.
\]
For the abstract dual $(R,E)$ in fact we obtain the same polytopes, with $1,2$ replaced by $\widetilde{1}, \widetilde{2}$
\begin{align*}
\P_{(R,E)} &= 2 \Delta_{\widetilde{1} \widetilde{2}} = \Conv \{ (2,0), (0,2) \} \subset \R^R, \\
\P_{(R,E)}^- &= \Conv \{ (1,0),(0,1) \} \subset \R^R, \\
\S_{(R,E)} &= \Conv \{ (1,0),(0,1) \} \subset \R^E.
\end{align*}

Thus, all six hypergraphs of $\Tr$ have 2 hypertrees; all six hypertree polytopes contain 2 integer points. We found in section \ref{sec:planar_duals_bipartite_plane_graphs} that all three dual graphs $G_V^*, G_E^* G_R^*$ have arborescence number $2$ as well. It's a small example, but again, it is no coincidence.

While all six hypertree polytopes have the same number of integer points, they are not the same polytope. Indeed, the polytopes lie in different spaces: $\S_{(R,V)}$ and $\S_{(E,V)}$ both lie in $\R^V$, while $\S_{(R,E)}, \S_{(V,E)} \subset \R^E$ and $\S_{(V,R)}, \S_{(E,R)} \subset \R^R$. And those polytopes which lie in the same space may not be identical: for instance,
\[
\S_{(V,R)} = \Conv \{ (0,2),(1,1) \} \subset \R^E
\quad \text{but} \quad
\S_{(E,R)} = \Conv \{ (1,0),(0,1) \} \subset \R^E.
\]
While these polytopes are not identical, they are isometric. Again, this is a small example, but no coincidence.

We discuss these non-coincidences in the next section.

\subsection{Hypertrees and arborescences}
\label{sec:hypertrees_arborescences}

We saw in theorems \ref{thm:trimmed_GP_spanning_tree_polytopes} and \ref{thm:hypertrees_in_abstract_duals} that abstract dual hypergraphs have the same number of hypertrees, correspnding to triangulations of the associated root polytope. Thus for a trinity $\Tr$, the abstract dual hypergraphs $(E,R),(R,E)$ of $G_V$ must have the same number of hypertrees, as must the hypergraphs $(R,V),(V,R)$ of $G_E$, and the hypergraphs $(V,E),(E,V)$ of $G_R$. But in fact more is true, as the example suggests: \emph{all six} hypergraphs have the same number of hypertrees, which is the magic number of $\Tr$. This triality relationship on hypergraphs follows from the following theorem, together with theorem \ref{thm:hypertrees_in_abstract_duals} and Tutte's tree trinity theorem \ref{thm:Tutte_tree_trinity_thm}, 

\begin{thm}[K\'{a}lm\'{a}n \cite{Kalman13_Tutte}, thm. 10.1]
\label{thm:Kalman_hypertree_arborescence}
Let $G$ be a plane bipartite graph corresponding to the hypergraph $\HH$. The number of hypertrees in $\HH$ is equal to the arborescence number of the planar dual $G^*$ of $G$:
\[
| S_\HH | = \rho(G^*).
\]
\end{thm}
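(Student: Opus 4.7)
The plan is to combine two facts. First, Tutte's tree trinity theorem \ref{thm:Tutte_tree_trinity_thm} gives $\rho(G^*) = |\M_\Tr|$ once we embed $G$ into a trinity $\Tr$ with $G = G_R$ (picking a red vertex in each complementary region and fixing an outer white triangle, with root vertices $r_0, e_0, v_0$). Second, we construct a bijection $\Phi \colon \M_\Tr \to S_\HH$ with $\HH = (V, E)$, which yields $|S_\HH| = |\M_\Tr| = \rho(G^*)$. The map $\Phi$ is defined by restricting a Tutte matching to its violet data: given $\mu \in \M_\Tr$, each non-root violet $v \in V$ is matched to a white triangle $\mu(v)$ having a unique emerald vertex $\epsilon(v) \in E$, and we set
\[
\Phi(\mu)(e) = |\{v \in V \setminus \{v_0\} \,:\, \epsilon(v) = e\}| \quad \text{for each } e \in E.
\]
The coordinates of $\Phi(\mu) \in \Z_{\geq 0}^E$ sum to $|V| - 1$, which matches the coordinate sum of any hypertree of $\HH$.

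To verify $\Phi(\mu) \in S_\HH$, use the spanning arborescence $A = A_\mu$ of $G^*$ associated with $\mu$ in the proof of theorem \ref{thm:Tutte_tree_trinity_thm}. The edges of $G$ whose planar duals do not lie in $A$ form a subgraph $T = T_\mu$ of $G$ with $|V| + |E| - 1$ edges (Euler's formula in the trinity gives $|A| = |R| - 1$ and $|E(G)| = |V|+|E|+|R|-2$), and standard planar duality implies $T$ is a spanning tree. The leaf-plucking assignment in the proof of theorem \ref{thm:Tutte_tree_trinity_thm}, as illustrated in figure \ref{fig:easy_trinity_arb_matching}, matches each non-root violet $v$ precisely to a white triangle containing the emerald neighbour of $v$ in $T$; hence $\deg_e T - 1 = \Phi(\mu)(e)$ and $\Phi(\mu) = f_\TT \in S_\HH$. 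The running example of section \ref{sec:more_example} bears this out: the two Tutte matchings produce precisely the two hypertrees $(2,0), (1,1) \in S_\HH$.

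The main obstacle is bijectivity of $\Phi$. Injectivity asks that distinct Tutte matchings yield distinct hypertrees: one argues that the arborescence $A_\mu$, and hence the matching, can be reconstructed from $T_\mu$ together with the planar embedding, while $T_\mu$ itself is pinned down by the $E$-degree sequence $\Phi(\mu) + \1$ once an appropriate leaf-plucking order is fixed. Surjectivity is the harder step: given $f \in S_\HH$, one must produce a matching with $\Phi(\mu) = f$. The strategy is to start from any spanning tree $T$ of $G$ realising $f$ and apply a sequence of local edge exchanges preserving the $E$-degree vector, of exactly the type used in section \ref{sec:polytope_example} to pass from $T_3$ to $T_1$, until the resulting tree has the property that its planar complement in $G^*$ assembles into a spanning arborescence rooted at $r_0$. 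Showing that this procedure always terminates at such a tree is equivalent---via the Cayley trick (theorem \ref{thm:cayley_trick}) together with theorem \ref{thm:trimmed_GP_spanning_tree_polytopes}---to exhibiting a particular triangulation of the root polytope $\QQ_G$ adapted to the planar embedding, in which the top-dimensional simplices $\QQ_T$ correspond exactly to edge-complements of arborescences of $G^*$; this is the technical heart of the proof.
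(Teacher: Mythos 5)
Your proposal ultimately constructs the same map as the paper: the arborescence $A$ of $G^*$ is dualized to a spanning tree $A^*$ of $G$, whose $E$-degree vector (minus $\1$) is the hypertree. The detour through Tutte matchings and the quantity $\Phi(\mu)(e)=|\{v:\epsilon(v)=e\}|$ is unnecessary overhead: once you note $T_\mu = A_\mu^*$, you can simply read off $f_{\TT_\mu}=\deg_E T_\mu - \1$, which is exactly the paper's $f_A$. The leaf-plucking analysis you give to show $\Phi(\mu)(e)=\deg_e T_\mu - 1$ is correct (each non-root violet vertex is plucked towards its emerald parent, so $\Phi(\mu)(e)$ counts the children of $e$), but it is verifying an identity that disappears if you just use the degree vector directly. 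So in substance this is the paper's bijection, not a different one.

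Where the proposal is weaker is in the bijectivity, which is where the real work lies. The paper's sketch also defers this (``with some effort''), so you are in the same position, but the specific gestures you make have problems. For injectivity, the statement that ``$T_\mu$ is pinned down by the $E$-degree sequence once an appropriate leaf-plucking order is fixed'' is not an argument: many spanning trees share the same $E$-degree vector, and you have not identified which one $T_\mu$ is or why there is only one arborescence-dual among them. For surjectivity, your strategy of invoking the Cayley trick plus Theorem~\ref{thm:trimmed_GP_spanning_tree_polytopes} to reduce to the existence of the K\'{a}lm\'{a}n--Murakami triangulation of $\QQ_G$ (Theorem~\ref{thm:KM_root_polytope_triangulation}) is circular within the paper's logical development: that triangulation result appears later and is itself built on K\'{a}lm\'{a}n's hypertree--arborescence bijection. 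You would need a self-contained combinatorial proof that every hypertree is realised by some arborescence-dual spanning tree (K\'{a}lm\'{a}n's original argument does this directly, without appealing to the root-polytope triangulation). You are honest that this is ``the technical heart,'' so this is a flagged gap rather than a hidden one, but the particular route you indicate would not close it.
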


Together with $\P_\HH^- = \S_{\overline{\HH}}$ (theorem \ref{thm:trimmed_GP_spanning_tree_polytopes}) and Berman's theorem \ref{thm:Berman_determinant}, we now have an even longer string of equalities:
\begin{align*}
|\det M_\Tr| &= \rho(G_V^*) = \rho(G_E^*) = \rho(G_R^*) = |\M_{\mathfrak{T}}| \\
&= | S_{(V,E)} | = | S_{(E,V)} | = | S_{(E,R)} | = | S_{(R,E)} | = | S_{(R,V)} | = | S_{(V,R)} | \\
&= | P_{(V,E)}^- | = | P_{(E,V)}^- | = | P_{(E,R)}^- | = | P_{(R,E)}^- | = | P_{(R,V)}^- | = | P_{(V,R)}^- |
\end{align*}

The proof of theorem \ref{thm:Kalman_hypertree_arborescence} uses the relationship between spanning trees of $G_V^*$ and $G_V$, mentioned in the proof of \ref{thm:Tutte_tree_trinity_thm}: the spanning trees of a plane graph $G$ and its planar dual $G^*$ are naturally in bijection. If $T$ is a spanning tree of $G$, the edges of $G^*$ whose duals are not included in $T$ form a spanning tree $T^*$ of $G^*$, and the correspondence $T \leftrightarrow T^*$ is bijective. We very briefly sketch the proof and refer to \cite{Kalman13_Tutte} for details.

\begin{proof}[Sketch of proof of theorem \ref{thm:Kalman_hypertree_arborescence}]
Given an arborescence $A$ of $G^*$, we forget the orientations on its edges and regard it as a spanning tree of $G^*$, so its planar dual $A^*$ is naturally a spanning tree of $G$. The spanning tree $A^*$ then yields a hypertree $f_A$ of $\HH$. This gives a map from arborescences of $G^*$ to hypertrees of $\HH$. With some effort this map can be shown to be a bijection. Since $G^*$ has $\rho(G^*)$ arborescences, and $\HH$ has $|S_\HH|$ hypertrees, the result follows.
\end{proof}

The proof of K\'{a}lm\'{a}n's theorem \ref{thm:Kalman_hypertree_arborescence} thus proves the following stronger statement.
\begin{cor}
\label{cor:hypertree_arborescence_bijection}
The map from arborescences of $G^*$ to hypertrees of $\HH$, which sends an arborescence $A$ to the hypertree $f_A$ of $A^*$, is a bijection.
\qed
\end{cor}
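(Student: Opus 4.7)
The map $\Phi: A \mapsto f_{A^*}$ is already produced in the proof sketch of Theorem~\ref{thm:Kalman_hypertree_arborescence}: one forgets the orientation on the arborescence $A$ of $G^*$, takes its planar dual $A^*$ (a spanning tree of $G$), and extracts the hypertree via Definition~\ref{def:hypertree}. Well-definedness is immediate from the classical fact that planar duality bijects spanning trees of $G$ and $G^*$. Since Theorem~\ref{thm:Kalman_hypertree_arborescence} already gives $\rho(G^*)=|S_\HH|$, and both the domain and codomain of $\Phi$ are finite sets of this common cardinality, it suffices to establish either injectivity or surjectivity of $\Phi$.

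My plan is to prove surjectivity. Fix a hypertree $h \in S_\HH$ and choose any spanning tree $T$ of $G$ realising $f_T=h$. Its planar dual $T^*$ is a spanning tree of $G^*$, which I orient by directing every edge away from the root $r_0$. If this root-orientation happens to agree with the intrinsic black-to-white orientation of every edge of $T^*$, we obtain the desired arborescence $A$ with $A^* = T$, and so $\Phi(A)=h$. Otherwise, I would perform a sequence of hypertree-preserving edge exchanges on $T$ to reduce the number of mismatched edges: given any mismatched edge $e^*\in T^*$, cutting $T^*$ at $e^*$ separates the tree into two components, and I would seek a black-to-white edge of $G^*$ bridging these components in the correct direction and swap it in for $e^*$. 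Dually, this swap replaces an edge $e$ of $T$ by another edge of $G$ with the same emerald endpoint, so that the $E$-degree vector is preserved and $f_T=h$ remains valid. Once surjectivity is proved, injectivity, and hence the stated bijection, follows from the cardinality equality of Theorem~\ref{thm:Kalman_hypertree_arborescence}.

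The main obstacle is ensuring that the exchange described above is always possible and that it genuinely preserves the emerald degree sequence. Existence of an appropriate replacement edge should follow from the fact that $G^*$ is balanced, so that across any cut the total in-degree from each side equals the total out-degree; a counting argument then rules out the possibility of no correctly oriented bridging edge. The more delicate point is the claim that the dual swap preserves emerald degrees: this should follow from a local analysis near the triangles of $\Tr$ adjacent to the exchanged edges, exploiting that the cyclic alternation of colours around each vertex forces two red edges whose planar duals lie on a common cycle in $G^*$ to share an emerald endpoint. This local-to-global argument is where I expect the bulk of the combinatorial work to lie, and it essentially mirrors the reasoning already used in the proof sketch of Theorem~\ref{thm:Tutte_tree_trinity_thm}.
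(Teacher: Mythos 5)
The paper's justification of this corollary is simply that the proof sketch of Theorem~\ref{thm:Kalman_hypertree_arborescence} already asserts the map $A \mapsto f_{A^*}$ is a bijection (deferring the argument to K\'{a}lm\'{a}n's original paper); the cardinality equality $\rho(G^*) = |S_\HH|$ is then a \emph{consequence} of the bijection, not a premise for it. Your plan reverses this logic: you take the theorem's cardinality equality as given and try to recover the bijection from surjectivity. Within the paper's exposition this is circular, since the only justification offered for the cardinality equality is the bijection you are trying to prove.

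Even setting the circularity aside, the surjectivity sketch has a genuine gap. When you swap $e^*$ for a bridging edge $g^*$ in $T^*$, the dual move adds $e$ to $T$ and removes $g$; preserving the $E$-degree vector requires $e$ and $g$ to share the same emerald endpoint. On the fundamental cycle of $T + e$ there is exactly \emph{one} edge $g_0 \neq e$ incident to the emerald endpoint of $e$, so the degree-preserving swap is forced --- but there is no reason $g_0^*$ should have the correct black-to-white orientation for the arborescence, so the balanced-degree counting argument cannot select it. Your subsequent claim that cyclic colour-alternation ``forces two red edges whose planar duals lie on a common cycle in $G^*$ to share an emerald endpoint'' is false: a cycle in $G^*$ separates $S^2$ into two discs, and the red edges crossed by it generally run between many distinct violet and emerald vertices. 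Finally, the ``reduce the number of mismatched edges'' loop has no termination argument; a swap repairing one mismatch can create others. The proposal is thus a plan with acknowledged gaps rather than a proof, and the intermediate lemma on which the degree-preservation rests does not hold.
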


As observed in section \ref{sec:more_example}, while the six hypertree polytopes all contain the same number of integer points, they do not all lie in the same space: rather, $\S_{(R,V)}, \S_{(E,V)} \subset \R^V$, while $\S_{(R,E)}, \S_{(V,E)} \subset \R^E$ and $\S_{(V,R)}, \S_{(E,R)} \subset \R^R$. We also observed that while the pairs of polytopes lying in the same space were not always identical, they were always isometric. This is true in general --- another duality relationship.

\begin{thm}[K\'{a}lm\'{a}n \cite{Kalman13_Tutte}, thm. 8.3]
\label{thm:polytope_reflections}
The hypertree polytopes $\S_{(V,E)}$ and $\S_{(R,E)}$ in $\R^E$ are reflections of each other in a point.
\end{thm}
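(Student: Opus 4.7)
The plan is to exhibit an explicit affine reflection of $\R^E$ that carries the hypertrees of $(V,E)$ bijectively onto the hypertrees of $(R,E)$. The centre will turn out to be $c=\tfrac{1}{2}(d-\mathbf{1})\in\R^E$, where $d(e)=d_R(e)=d_V(e)$ denotes the common red and violet degree of the emerald vertex $e$ in $\Tr$; these two degrees agree because around any emerald vertex the red and violet neighbours alternate.

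First I would use Tutte matchings to set up the bijection. Given $\mu\in\M_\Tr$, the proof of Theorem~\ref{thm:Tutte_tree_trinity_thm} produces a triple of arborescences $A_V,A_E,A_R$ in $G_V^*,G_E^*,G_R^*$, rooted at the respective root vertices of the outer triangle. Their planar-dual spanning trees $A_R^*\subset G_R$ and $A_V^*\subset G_V$ then yield, via Corollary~\ref{cor:hypertree_arborescence_bijection}, hypertrees $f_{A_R^*}=\deg_E A_R^*-\mathbf{1}\in S_{(V,E)}$ and $f_{A_V^*}=\deg_E A_V^*-\mathbf{1}\in S_{(R,E)}$. As $\mu$ ranges over $\M_\Tr$, $A_R$ and $A_V$ each range over all arborescences of their respective duals, so the resulting hypertrees exhaust $S_{(V,E)}$ and $S_{(R,E)}$ respectively.

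The heart of the argument is the pointwise identity
\[
\deg_e A_R^* \;+\; \deg_e A_V^* \;=\; d(e) + 1 \qquad (e \in E).
\]
Fix $e$ and write $k=d(e)$. Around $e$ there are $k$ red edges, $k$ violet edges, and $2k$ triangles ($k$ white, $k$ black) in cyclic alternation. The Tutte-matching-to-arborescence construction identifies, for each white triangle $W=(r,e,v)$ at $e$, which of the three vertices $r$, $e$, $v$ is matched to $W$ with which coloured edge of $W$ is absent from its respective dual spanning tree: $r$ is matched to $W$ iff the red edge $ev$ lies outside $A_R^*$; $v$ is matched to $W$ iff the violet edge $re$ lies outside $A_V^*$; $e$ is matched to $W$ iff the emerald edge $rv$ lies outside $A_E^*$. (The ``only if'' directions come directly from the construction; the ``if'' directions use the arborescence property that each non-root vertex has in-degree exactly one, forcing any $G_R^*$-arc dual to $ev$ and entering $r$ to be the unique incoming arc of $A_R$ at $r$, and similarly for the other colours.) These three cases are mutually exclusive, and together with the case ``$W$ is the outer triangle'' they exhaust all possibilities.

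Let $n_r,n_e,n_v$ be the numbers of white triangles at $e$ whose matched vertex is red, emerald, violet respectively. The outer triangle lies at $e$ exactly when $e$ is the root emerald vertex, in which case $e$ itself is not matched, so $n_e+(\text{outer count at }e)=1$ in either eventuality. Summing all three colours of matched vertex plus the outer count recovers the total of $k$ white triangles at $e$, so $n_r+n_v=k-1$ unconditionally. Meanwhile the correspondence above gives $n_r=k-\deg_e A_R^*$ and $n_v=k-\deg_e A_V^*$, whence $2k-\deg_e A_R^*-\deg_e A_V^*=k-1$, proving the identity.

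Subtracting $\mathbf{1}$ coordinatewise turns the identity into $f_{A_V^*}=(d-\mathbf{1})-f_{A_R^*}$ in $\R^E$, so the map $f\mapsto (d-\mathbf{1})-f$ is a well-defined bijection $S_{(V,E)}\to S_{(R,E)}$: this is exactly reflection in $c=\tfrac{1}{2}(d-\mathbf{1})$. Because each hypertree polytope is by definition the convex hull of its hypertrees, the same affine reflection carries $\S_{(V,E)}$ onto $\S_{(R,E)}$, as required. The main obstacle is the trichotomy in the second step: one must unpack the proof of Theorem~\ref{thm:Tutte_tree_trinity_thm} carefully enough to see that, at any white triangle $W$, the colour of the matched vertex is in bijection with the colour of the missing dual-spanning-tree edge, uniformly across root and non-root cases. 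Once this is in hand, the local count at a single emerald vertex is a few lines.
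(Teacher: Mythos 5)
Your proof is correct. It establishes the pointwise identity $\deg_e A_R^* + \deg_e A_V^* = d(e)+1$ for each $e\in E$, which upon subtracting $\mathbf 1$ gives exactly $f_{A_V^*} = (d-\mathbf 1) - f_{A_R^*}$, the reflection in $c=\tfrac12(d-\mathbf 1)$; and the center matches the paper's $\deg_E G_R - \mathbf 1$ since $d(e)=\deg_e G_R$. I checked the local count at $e$ carefully: the $k$ white triangles at $e$ are in bijection with both the $k$ red edges at $e$ and the $k$ violet edges at $e$, the arborescence in-degree conditions translate correctly into your three conditions on which coloured edge is missing from the dual spanning tree, and the outer-triangle/root-vertex case exactly cancels the unit contribution of the emerald vertex, giving $n_r+n_v=k-1$.

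The route differs from what the paper sketches. The paper's sketch constructs, from an \emph{arbitrary} spanning tree $T$ of $G_R$, a partner spanning tree $T^*$ of $G_V$ by ``deforming the planar dual of $T$'' and then proves the degree identity for that pair. You instead work only with spanning trees that are planar duals of arborescences, pair them up across $G_R^*$ and $G_V^*$ through the Tutte-matching bijection from the proof of Theorem~\ref{thm:Tutte_tree_trinity_thm}, and then invoke Corollary~\ref{cor:hypertree_arborescence_bijection} to see that these representatives already exhaust both sets of hypertrees. This buys you a clean well-definedness argument (you never have to worry about two different spanning trees of $G_R$ representing the same hypertree yielding different partners) and a transparent proof that the reflection is surjective onto $S_{(R,E)}$, since both legs of the correspondence are bijections by construction. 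The cost is that your degree identity is proved only for the arborescence-dual pairs, whereas the paper's sketched construction purports to produce a partner for every spanning tree of $G_R$, a more general statement that may be useful elsewhere. Both approaches are legitimate, and as you note, the real work in yours is in carefully verifying the local trichotomy at each white triangle; your treatment of the root/outer-triangle boundary cases is what makes the count close, and you handle it correctly.
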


Said another way, $-\S_{(V,E)}$ is a translation of $\S_{(R,E)}$.
By symmetry, $\S_{(R,V)}, \S_{(E,V)} \subset \R^V$ are also related by a reflection, as are $\S_{(V,R)}, \S_{(E,R)} \subset \R^R$.

We again give a rough sketch of the proof, and refer to \cite{Kalman13_Tutte} for details.
\begin{proof}[Sketch of proof]
Given a hypertree $f_T \in \Z^E$ of $(V,E)$, represented by a spanning tree $T$ of $G_R$, one can construct a spanning tree $T^*$ of $G_V$, such that for each $e \in E$, we have $\deg_e T + \deg_e T^* = |e| + 1$, where $|e|$ is the size of the hyperedge $e$ in $(V,E)$ and $(R,E)$ (these two sizes are equal since violet and red edges alternate around $e$). This $T^*$ can be constructed by deforming the planar dual of $T$.  The hypertree $f_{T^*} \in \Z^E$ of $T^*$ then satisfies $f_T + f_{T^*} = \deg_E G_R - (1,1,\ldots,1)$, so that $f_{T^*}$ is obtained from $f_T$ by reflection in a constant point in $\Z^E$.
\end{proof}

\subsection{Arborescences and the root polytope}
\label{sec:arborescences_root_polytope}

The bijection of corollary \ref{cor:hypertree_arborescence_bijection} means that arborescences are especially nice spanning trees. A hypertree $f$ of $\HH$ (which, recall, is essentially the degree sequence of a spanning tree of $G$ at the hyperedges of $\HH$) arises from at least one, possibly many, spanning trees of $G$, and thus corresponds to at least one, possibly many, spanning trees of $G^*$. The bijection of corollary \ref{cor:hypertree_arborescence_bijection} however says that precisely one of these spanning trees of $G^*$ is an arborescence. So among the spanning trees of $G$ representing a given hypertree $f$, one among them is the ``nicest", namely the one dual to an arborescence. (This choice of ``nicest" does however depend on a choice of root vertex in $G^*$.)

We illustrate this phenomenon in figure \ref{fig:spanning_arborescences_with_duals} for the example graph $G$ of section \ref{sec:hypergraphs_polytope_dualities} (also the red graph $G_R$ of the trinity of figure \ref{fig:easy_trinity}). As usual we choose root vertices to be ``outer". The two arborescences of $G^*$ have duals which are shown as $T_1$ and $T_4$ in figure \ref{fig:spanning_trees_in_G}. But we saw in figure \ref{fig:spanning_trees_in_G} that $G_R$ has $4$ spanning trees. The two trees $T_1, T_4$, the duals of arborescences, are the ``nicest" spanning trees and represent all hypertrees exactly once.

\begin{figure}
\begin{center}
\begin{tikzpicture}[scale = 1]
\coordinate (e1) at (0,0);
\coordinate (e2) at (0,-2);
\coordinate (v1) at (0,1);
\coordinate (v2) at (1,-1);
\coordinate (v3) at (-1,-1);
\coordinate (r1) at (0,-1);
\coordinate (r2) at (0,2);


\draw [ultra thick, red] (e1) -- (v1);
\draw [ultra thick, red] (e1) -- (v2);
\draw [ultra thick, red] (e1) -- (v3);
\draw [ultra thick, red] (e2) -- (v2);
\draw [ultra thick, red] (e2) -- (v3);



\begin{scope}[dotted, ultra thick, decoration={
    markings,
    mark=at position 0.5 with {\arrow{>}}}
    ] 
\draw [red, postaction={decorate}] (r2) .. controls ($ (r2) + (-120:2.2) $) and ($ (r2) + (-60:2.2) $) .. (r2);
\draw [red, postaction={decorate}] (r2) .. controls ($ (r2) + (-25:1) $) and ($ (r1) + (45:2) $) .. (r1);
\draw [red, postaction={decorate}] (r2) .. controls ($ (r2) + (-175:3) $) and ($ (r1) + (-135:2.7) $) .. (r1);
\draw [red, postaction={decorate}] (r1) to [bend left=70] (r2);
\draw [red, postaction={decorate}] (r1) .. controls ($ (r1) + (-45:3) $) and ($ (r2) + (0:3) $) .. (r2);
\end{scope}

\foreach \x in {(e1), (e2)}
{
\draw [green!50!black, fill=green!50!black] \x circle  (3pt);
}

\foreach \x in {(v1), (v2), (v3)}
{
\draw [blue, fill=blue] \x circle  (3pt);
}

\foreach \x in {(r1), (r2)}
{
\draw [red, fill=red] \x circle  (3pt);
}

\end{tikzpicture}
\begin{tikzpicture}[scale = 1]
\coordinate (e1) at (0,0);
\coordinate (e2) at (0,-2);
\coordinate (v1) at (0,1);
\coordinate (v2) at (1,-1);
\coordinate (v3) at (-1,-1);
\coordinate (r1) at (0,-1);
\coordinate (r2) at (0,2);


\draw [ultra thick, red] (e1) -- (v1);
\draw [ultra thick, red] (e1) -- (v3);
\draw [ultra thick, red] (e2) -- (v2);
\draw [ultra thick, red] (e2) -- (v3);



\begin{scope}[dotted, ultra thick, decoration={
    markings,
    mark=at position 0.5 with {\arrow{>}}}
    ] 
\draw [red, postaction={decorate}] (r2) .. controls ($ (r2) + (-25:1) $) and ($ (r1) + (45:2) $) .. (r1);
\end{scope}

\foreach \x in {(e1), (e2)}
{
\draw [green!50!black, fill=green!50!black] \x circle  (3pt);
}

\foreach \x in {(v1), (v2), (v3)}
{
\draw [blue, fill=blue] \x circle  (3pt);
}

\foreach \x in {(r1), (r2)}
{
\draw [red, fill=red] \x circle  (3pt);
}

\end{tikzpicture}
\begin{tikzpicture}[scale = 1]
\coordinate (e1) at (0,0);
\coordinate (e2) at (0,-2);
\coordinate (v1) at (0,1);
\coordinate (v2) at (1,-1);
\coordinate (v3) at (-1,-1);
\coordinate (r1) at (0,-1);
\coordinate (r2) at (0,2);


\draw [ultra thick, red] (e1) -- (v1);
\draw [ultra thick, red] (e1) -- (v2);
\draw [ultra thick, red] (e1) -- (v3);
\draw [ultra thick, red] (e2) -- (v2);



\begin{scope}[dotted, ultra thick, decoration={
    markings,
    mark=at position 0.5 with {\arrow{>}}}
    ] 
\draw [red, postaction={decorate}] (r2) .. controls ($ (r2) + (-175:3) $) and ($ (r1) + (-135:2.7) $) .. (r1);
\end{scope}

\foreach \x in {(e1), (e2)}
{
\draw [green!50!black, fill=green!50!black] \x circle  (3pt);
}

\foreach \x in {(v1), (v2), (v3)}
{
\draw [blue, fill=blue] \x circle  (3pt);
}

\foreach \x in {(r1), (r2)}
{
\draw [red, fill=red] \x circle  (3pt);
}

\end{tikzpicture}

\end{center}
\caption{The red graph of the trinity of figure \ref{fig:easy_trinity}, together with its dual $G_R^*$; and the two spanning arborescences of $G_R^*$, together with planar dual spanning trees of $G_R$. The spanning trees of $G_R$ are $T_1$ and $T_4$ from figure \ref{fig:spanning_trees_in_G}.}
\label{fig:spanning_arborescences_with_duals}
\end{figure}
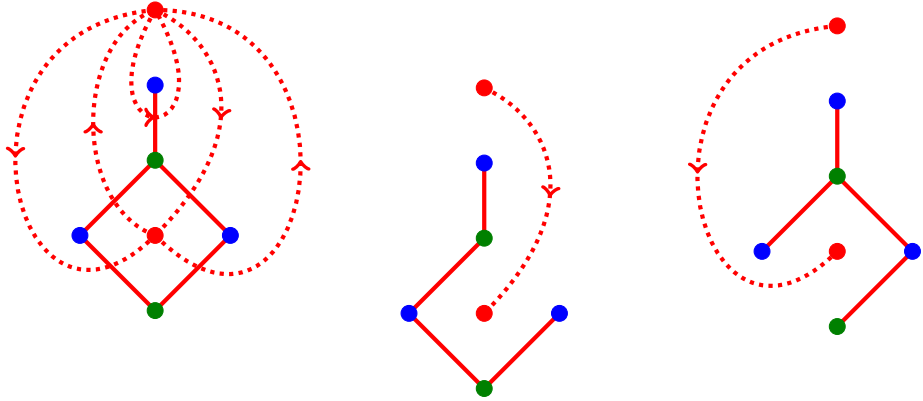

Now, recall from section \ref{sec:polytope_example} that $T_1$ and $T_4$ also gave nice decompositions of polytopes. We found that the root polytope of $G$ decomposed into two tetrahedra corresponding to the root polytopes of $T_1$ and $T_4$, and the GP polytopes of $\HH = (V,E)$ and $\overline{\HH} = (E,V)$ decomposed into GP polytopes for $\TT_1$ and $\TT_4$:
\[
\QQ_G = \QQ_{T_1} \cup \QQ_{T_4}, \quad
\P_{\HH} = \P_{\TT_1} \cup \P_{\TT_4}, \quad
\P_{\overline{\HH}} = \P_{\overline{\TT_1}} \cup \P_{\overline{\TT_4}}.
\]

Again, this is no coincidence. We know by corollary \ref{cor:hypertree_arborescence_bijection} tells us that the  spanning trees dual to arborescences will cover all hypertrees precisely once. And we have seen (theorems \ref{thm:trimmed_GP_spanning_tree_polytopes} and \ref{thm:hypertrees_in_abstract_duals}) that spanning trees which yield a triangulation of $\QQ_G$ also cover all hypertrees precisely once. So it might be expected that the spanning trees dual to arborescences \emph{always} yield a triangulation of $\QQ_G$. This was proved by K\'{a}lm\'{a}n and Murakami. 

\begin{thm}[K\'{a}lm\'{a}n--Murakami \cite{K-Murakami}, thm. 1.1]
\label{thm:KM_root_polytope_triangulation}
Let $G$ be a connected plane bipartite graph, with planar dual $G^*$ and a fixed root vertex. The root polytopes of the spanning trees of $G$ dual to arborescences of $G^*$ form a triangulation of the root polytope $\QQ_G$ of $G$.
\end{thm}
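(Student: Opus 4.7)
The plan is to invoke the Cayley trick (Theorem~\ref{thm:cayley_trick}) to recast the triangulation claim as an equivalent statement about a fine mixed subdivision of $\P_{\overline{\HH}}$. Under the slicing $x_V = -1/|V|$, triangulations of $\QQ_G$ correspond bijectively to fine mixed subdivisions of $\P_{\overline{\HH}}$, and the slice of $\QQ_T$ for a spanning tree $T$ is a rescaled copy of $\P_{\overline{\TT}}$. It therefore suffices to prove that, for the collection $T_i = A_i^{\ast}$ indexed by the arborescences $A_i$ of $G^{\ast}$, the polytopes $\P_{\overline{\TT_i}}$ form a fine mixed subdivision of $\P_{\overline{\HH}}$.

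The counts match: by Corollary~\ref{cor:hypertree_arborescence_bijection} the hypertrees $f_{\TT_1}, \ldots, f_{\TT_s} \in \Z^U$ are pairwise distinct and exhaust $S_\HH$; combining Theorems~\ref{thm:trimmed_GP_spanning_tree_polytopes} and~\ref{thm:hypertrees_in_abstract_duals} we get $s = |S_\HH| = |S_{\overline{\HH}}|$, precisely the number of cells in any fine mixed subdivision of $\P_{\overline{\HH}}$. Each $\P_{\overline{\TT_i}}$ is a top-dimensional Minkowski cell because $T_i$ is a spanning tree. The key lemma preceding Theorem~\ref{thm:trimmed_GP_spanning_tree_polytopes}, applied with the roles of $\HH$ and $\overline{\HH}$ exchanged, guarantees that $\P_{\overline{\TT_i}}$ contains the integer translate $\Delta_U + f_{\TT_i}$ in its interior and meets no other $\Delta_U + a$ in its interior. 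Pairwise interior disjointness of the cells then follows from distinctness of the labels $f_{\TT_i}$: a common interior point of $\P_{\overline{\TT_i}}$ and $\P_{\overline{\TT_j}}$ would have to sit in a single integer translate of $\Delta_U$ attached to both, forcing $f_{\TT_i} = f_{\TT_j}$.

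With disjoint interiors in hand, covering follows from a normalized volume count: the normalized volume of $\P_{\overline{\HH}}$ equals $|S_\HH| = s$ (a standard fact for generalized permutohedra, proved by taking any single fine mixed subdivision, whose cells are spanning-tree polytopes of normalized volume $1$), so the $s$ interior-disjoint top-dimensional cells $\P_{\overline{\TT_i}}$ must exhaust $\P_{\overline{\HH}}$. Hence $\{\P_{\overline{\TT_i}}\}$ is a fine mixed subdivision of $\P_{\overline{\HH}}$, and under the Cayley correspondence $\{\QQ_{T_i}\}$ is a triangulation of $\QQ_G$.

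The main obstacle is the interior-disjointness step. As stated, the key lemma controls only integer translates of $\Delta_U$, so to rule out overlap in a region not aligned with the integer lattice one must first observe that every interior point of a top-dimensional spanning-tree cell $\P_{\overline{\TT}}$ lies inside some integer translate of $\Delta_U$; this uses the product-of-simplices structure of $\P_{\overline{\TT}}$, encoded in the equality $\sum_v \dim \Delta_{T(v)} = \dim \P_{\overline{\HH}}$, which is implicit but not spelled out in the excerpt. The volume identity used in the coverage step is likewise standard but not recorded in the excerpt and would need a brief justification; an alternative coverage argument could proceed inductively by tracking the unique integer points each newly added cell covers in $S_{\overline{\HH}}$.
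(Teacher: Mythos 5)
The paper only cites K\'{a}lm\'{a}n--Murakami for this theorem and gives no proof, so there is nothing internal to compare against; I am assessing your proposal on its own. The Cayley-trick reduction is a reasonable framing, but the argument as written has a fatal gap that the paper's own running example already exposes. Your proof uses exactly two properties of the arborescence-dual trees $T_i = A_i^\ast$: that their hypertrees $f_{\TT_i}$ are pairwise distinct, and that they exhaust $S_\HH$. Any family of spanning trees representing each hypertree of $\HH$ exactly once has both properties, but such a family need not triangulate $\QQ_G$. In the example of section \ref{sec:polytope_example}, the pair $\{T_1,T_3\}$ has $f_{\TT_1}=(2,0)$ and $f_{\TT_3}=(1,1)$, distinct and exhausting $S_\HH$, and the cells $\P_{\overline{\TT_1}}=\Conv\{(3,0),(2,1)\}$ and $\P_{\overline{\TT_3}}=\Conv\{(2,1),(1,2)\}$ even have disjoint interiors and cover $\P_{\overline{\HH}}$. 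Yet $\QQ_{T_1}$ and $\QQ_{T_3}$ overlap: they share the triangle $\Conv\{{\bf i}_1-{\bf i}_{\bar 1},\ {\bf i}_1-{\bf i}_{\bar 2},\ {\bf i}_2-{\bf i}_{\bar 2}\}$ as a facet, and their remaining vertices ${\bf i}_1-{\bf i}_{\bar 3}$ and ${\bf i}_2-{\bf i}_{\bar 3}$ both have $x_{\bar 3}=-1$, so they lie on the same side of that facet's affine hull $\{x_{\bar 3}=0\}$. Correspondingly the paper records $\P_{\TT_1}\subsetneq\P_{\TT_3}$, and the arborescence-dual pair is $\{T_1,T_4\}$, not $\{T_1,T_3\}$. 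The moral is that ``disjoint interiors of the cells $\P_{\overline{\TT_i}}$ plus coverage'' is strictly weaker than ``fine mixed subdivision'': a mixed subdivision requires cells, viewed together with their Minkowski decomposition data, to meet in common faces, and it is exactly this compatibility that the arborescence hypothesis must provide. Your argument never uses the arborescence condition beyond the bijection of Corollary \ref{cor:hypertree_arborescence_bijection}, so it cannot distinguish a good family like $\{T_1,T_4\}$ from a bad one like $\{T_1,T_3\}$.

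The interior-disjointness step you flagged is also broken, and the repair you sketch is false. Integer translates of the standard simplex $\Delta_U$ do not cover its affine span once $|U|\geq 3$, and a top-dimensional spanning-tree cell is a product of simplices whose shape need not be a simplex. In the example, $\P_{\TT_3}=\Delta_{12}+\Delta_{23}$ is a parallelogram, while $\Delta_V+f_{\overline{\TT_3}}=\Conv\{(1,1,0),(0,2,0),(0,1,1)\}$ is a triangle covering only half of it; the interior point $(3/4,1/2,3/4)\in\P_{\TT_3}$ lies in no translate $a+\Delta_V$ with $a\in\Z^V$, since requiring $(3/4,1/2,3/4)-a$ to have nonnegative coordinates summing to $1$ forces $a_1,a_2,a_3\leq 0$ while $a_1+a_2+a_3=1$. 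So one cannot localize a hypothetical common interior point of two cells to a single integer translate, and the product-of-simplices structure you invoke does not save the step. Both defects point to the same missing ingredient: a genuine use of the arborescence structure (not merely the induced bijection on hypertrees) to establish pairwise compatibility of the simplices $\QQ_{T_i}$.
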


Using the Cayley trick, we also obtain nice decompositions of the GP polytopes.

Note that, if we take a different choice of root vertex, we obtain a different set of spanning arborescences of $G^*$, and hence a different set of spanning trees in $G$ whose root polytopes triangulate $\QQ_G$. So there are in general many triangulations of $\QQ_G$. In our example, if we change the root red vertex from $\widetilde{2}$ to $\widetilde{1}$, then the spanning arborescences of $G^*$ become those dual to spanning trees $T_2$ and $T_3$ of figure \ref{fig:spanning_trees_in_G}. And indeed we found that $\QQ_G$ also decomposes as $\QQ_{T_2} \cup \QQ_{T_3}$.

K\'{a}lm\'{a}n--Murakami go further, and show that the triangulation of $\QQ_G$ obtained is \emph{shellable} \cite[prop. 1.2]{K-Murakami}. Roughly this means that the simplices of the triangulation can be removed one at a time to leave a nice polytope at each stage. The \emph{$h$-vector} of this triangulation turns out to be closely related to the HOMFLY-PT polynomial of a link related to $G$, which we discuss next.

\section{Knots, links and polytopes}
\label{sec:knots_links}

We have now seen, in some detail, the structure which emerges from considering bipartite plane graphs and their polytopes --- they form trinities, and there are interesting relationships between the associated spanning trees, arborescences, Tutte matchings, and polytopes.

We now take a step further in our story and consider some 3-dimensional topological constructions associated to such graphs. We consider a \emph{knot} or \emph{link} associated to a bipartite plane graph.

\subsection{From graph to link: the median construction}
\label{sec:median_construction}

Let $G$ be a finite plane graph, which we regard as lying in $\R^2 \subset S^2 \subset S^3$. We construct a link $L_G$ in $S^3$ as follows. Take a regular neighbourhood $U$ of $G$ in $\R^2$, which can be regarded as consisting of a disc around each vertex of $G$, together with a band along each edge of $G$. Insert a negative half twist in each band of $U$ and call the resulting surface $F_G$. Then $L_G = \partial F_G$ is an alternating link in $S^3$. This construction is called \emph{the median construction}. See figures \ref{fig:easy_median_construction} and \ref{fig:median_construction} for examples, using the red graphs of the trinities of figures \ref{fig:easy_trinity} and \ref{fig:trinity} respectively.

\begin{figure}
\begin{center}
\begin{tikzpicture}[scale = 1.5]
\coordinate (e1) at (0,0);
\coordinate (e2) at (0,-2);
\coordinate (v1) at (0,1);
\coordinate (v2) at (1,-1);
\coordinate (v3) at (-1,-1);
\coordinate (r1) at (0,-1);
\coordinate (r2) at (0,2);

\draw [ultra thick, red] (e1) -- (v1);
\draw [ultra thick, red] (e1) -- (v2);
\draw [ultra thick, red] (e1) -- (v3);
\draw [ultra thick, red] (e2) -- (v2);
\draw [ultra thick, red] (e2) -- (v3);

\foreach \x in {(e1), (e2)}
{
\draw [green!50!black, fill=green!50!black] \x circle  (3pt);
}

\foreach \x in {(v1), (v2), (v3)}
{
\draw [blue, fill=blue] \x circle  (3pt);
}
\end{tikzpicture}
\begin{tikzpicture}[scale = 1.5]
\coordinate (e1) at (0,0);
\coordinate (e2) at (0,-2);
\coordinate (v1) at (0,1);
\coordinate (v2) at (1,-1);
\coordinate (v3) at (-1,-1);
\coordinate (r1) at (0,-1);
\coordinate (r2) at (0,2);

\coordinate (e1a) at ($ (e1) + (30:0.3) $);
\coordinate (e1b) at ($ (e1) + (150:0.3) $);
\coordinate (e1c) at ($ (e1) + (270:0.3) $);

\coordinate (e2a) at ($ (e2) + (90:0.3) $);
\coordinate (e2b) at ($ (e2) + (270:0.3) $);

\coordinate (v1a) at ($ (v1) + (90:0.3) $);

\coordinate (v2a) at ($ (v2) + (0:0.3) $);
\coordinate (v2b) at ($ (v2) + (180:0.3) $);

\coordinate (v3a) at ($ (v3) + (0:0.3) $);
\coordinate (v3b) at ($ (v3) + (180:0.3) $);

\draw [draw=none, fill=black!10!white]
(v1a) 
to [out = 180, in = 135] (e1a)
to (e1b)
to [out=45, in=0] (v1a)
to (e1b);

\draw [draw=none, fill=black!10!white]
(e1b)
to [out = 225, in = 90] (v3a)
to (v3b)
to [out = 90, in = 180] (e1c)
to (e1b);

\draw [draw=none, fill=black!10!white]
(v3b)
to [out = 270, in = 180] (e2a)
to (e2b)
to [out = 180, in = 270] (v3a)
to (v3b);

\draw [draw=none, fill=black!10!white]
(e2b)
to [out = 0, in = 270] (v2b)
to [out = 90, in = 315] (e1a)
to (e1c)
to [out = 0, in = 90] (v2a)
to [out = 270, in = 0] (e2a)
to (e2b);

\draw [draw=none, fill=black!10!white]
(e1a) to (e1b) to (e1c) to (e1a);

\draw [draw=none, fill=black!10!white]
(e2a) to (e2b) to (e2a);

\draw [draw=none, fill=black!10!white]
(v2a) to (v2b) to (v2a);

\draw [draw=none, fill=black!10!white]
(v3a) to (v3b) to (v3a);

\draw [ultra thick, red] (e1) -- (v1);
\draw [ultra thick, red] (e1) -- (v2);
\draw [ultra thick, red] (e1) -- (v3);
\draw [ultra thick, red] (e2) -- (v2);
\draw [ultra thick, red] (e2) -- (v3);

\begin{knot}[consider self intersections, 
clip width=2,
flip crossing=2,
flip crossing=4
]
\strand [thick]
(v1a)
to [out = 180, in = 135] (e1a)
to [out = 315, in = 90] (v2b)
to [out = 270, in = 0] (e2b)
to [out = 180, in = 270] (v3a)
to [out = 90=, in = 225] (e1b)
to [out = 45, in = 0] (v1a);
\strand[thick]
(v2a)
to [out = 90, in = 0] (e1c)
to [out = 180, in = 90] (v3b)
to [out = 270, in = 180] (e2a)
to [out = 0, in = 270] (v2a);
\end{knot}

\foreach \x in {(e1), (e2)}
{
\draw [green!50!black, fill=green!50!black] \x circle  (3pt);
}

\foreach \x in {(v1), (v2), (v3)}
{
\draw [blue, fill=blue] \x circle  (3pt);
}


\end{tikzpicture}
\begin{tikzpicture}[scale = 1.5]
\coordinate (e1) at (0,0);
\coordinate (e2) at (0,-2);
\coordinate (v1) at (0,1);
\coordinate (v2) at (1,-1);
\coordinate (v3) at (-1,-1);
\coordinate (r1) at (0,-1);
\coordinate (r2) at (0,2);

\coordinate (e1a) at ($ (e1) + (30:0.3) $);
\coordinate (e1b) at ($ (e1) + (150:0.3) $);
\coordinate (e1c) at ($ (e1) + (270:0.3) $);

\coordinate (e2a) at ($ (e2) + (90:0.3) $);
\coordinate (e2b) at ($ (e2) + (270:0.3) $);

\coordinate (v1a) at ($ (v1) + (90:0.3) $);

\coordinate (v2a) at ($ (v2) + (0:0.3) $);
\coordinate (v2b) at ($ (v2) + (180:0.3) $);

\coordinate (v3a) at ($ (v3) + (0:0.3) $);
\coordinate (v3b) at ($ (v3) + (180:0.3) $);

\draw [draw=none, fill=black!10!white]
(v1a) 
to [out = 180, in = 135] (e1a)
to (e1b)
to [out=45, in=0] (v1a)
to (e1b);

\draw [draw=none, fill=black!10!white]
(e1b)
to [out = 225, in = 90] (v3a)
to (v3b)
to [out = 90, in = 180] (e1c)
to (e1b);

\draw [draw=none, fill=black!10!white]
(v3b)
to [out = 270, in = 180] (e2a)
to (e2b)
to [out = 180, in = 270] (v3a)
to (v3b);

\draw [draw=none, fill=black!10!white]
(e2b)
to [out = 0, in = 270] (v2b)
to [out = 90, in = 315] (e1a)
to (e1c)
to [out = 0, in = 90] (v2a)
to [out = 270, in = 0] (e2a)
to (e2b);

\draw [draw=none, fill=black!10!white]
(e1a) to (e1b) to (e1c) to (e1a);

\draw [draw=none, fill=black!10!white]
(e2a) to (e2b) to (e2a);

\draw [draw=none, fill=black!10!white]
(v2a) to (v2b) to (v2a);

\draw [draw=none, fill=black!10!white]
(v3a) to (v3b) to (v3a);

\begin{knot}[consider self intersections, 
clip width=2,
flip crossing=2,
flip crossing=4
]
\strand [thick]
(v1a)
to [out = 180, in = 135] (e1a)
to [out = 315, in = 90] (v2b)
to [out = 270, in = 0] (e2b)
to [out = 180, in = 270] (v3a)
to [out = 90=, in = 225] (e1b)
to [out = 45, in = 0] (v1a);
\strand[thick]
(v2a)
to [out = 90, in = 0] (e1c)
to [out = 180, in = 90] (v3b)
to [out = 270, in = 180] (e2a)
to [out = 0, in = 270] (v2a);
\end{knot}

\end{tikzpicture}
\end{center}
\caption{The median construction for the graph of figure \ref{fig:graph_G}, the red graph of the trinity of figure \ref{fig:easy_trinity}. Left: the graph. Centre: the construction. Right: the link and Siefert surface.}
\label{fig:easy_median_construction}
\end{figure}
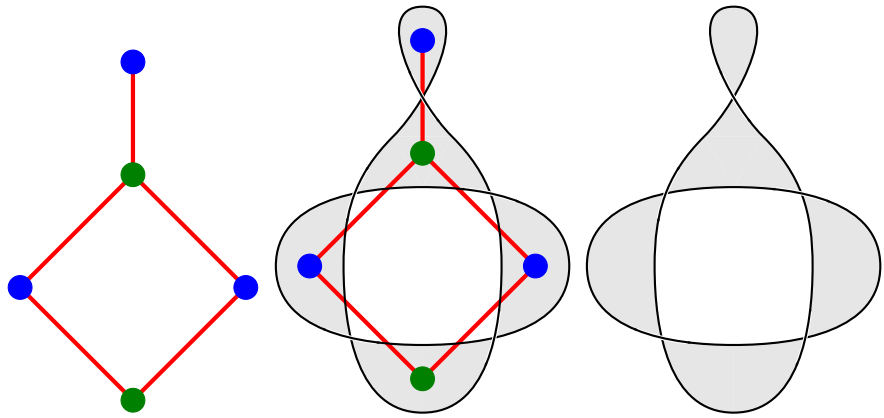

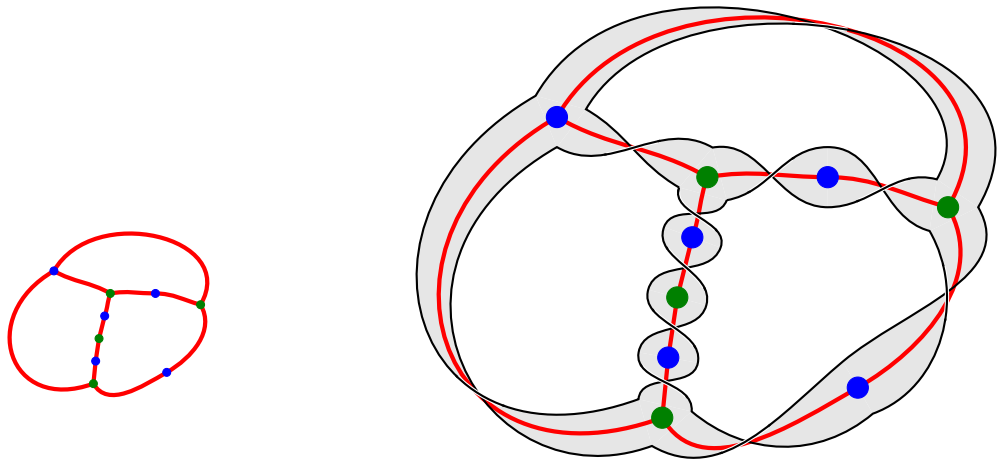
\begin{figure}
\begin{center}
\begin{tikzpicture}[scale = 0.15]
\coordinate (r3) at (1,6);
\coordinate (e1) at (-1,4); 
\coordinate (v1) at (-1.5, 2);
\coordinate (e2) at (-2,0); 
\coordinate (v2) at (-2.3,-2); 
\coordinate (e3) at (-2.5,-4); 
\coordinate (r0) at (-3,-7); 
\coordinate (v4) at (3,4);
\coordinate (r2) at (2.5,0);
\coordinate (v3) at (4,-3);
\coordinate (e0) at (7,3);
\coordinate (v0) at (-6,6); 
\coordinate (r1) at (-6, 0); 

\draw [ultra thick, draw=none] (r3) to[out=210,in=70] (e1); 
\draw [ultra thick, red] (e1) to [out = 250, in = 70] (v1); 
\draw [ultra thick, red] (v1) to (e2); 
\draw [ultra thick, red] (e2) to (v2); 
\draw [ultra thick, red] (v2) to (e3); 
\draw [ultra thick, draw=none] (e3) to [out=270, in=90] (r0); 
\draw [ultra thick, draw=none] (r0)
.. controls ($ (r0) + (270:8) $) and ($ (e0) + (345:10) $) .. (e0); 
\draw [ultra thick, red] (e0) to [out=165, in=0] (v4); 
\draw [ultra thick, red] (v4) to [out=180, in=10] (e1); 
\draw [ultra thick, draw=none] (e1) to [out=190, in=90] (r1); 
\draw [ultra thick, draw=none] (r1) to [out=270, in=170] (v2); 
\draw [ultra thick, draw=none] (v2) to [out=350, in=210] (r2); 
\draw [ultra thick, draw=none] (r2) to [out=30, in=300] (v4); 
\draw [ultra thick, draw=none] (v4) to [out=120, in=300] (r3); 
\draw [ultra thick, draw=none] (r3) to [out=120, in=30] (v0); 
\draw [ultra thick, red] (v0)
.. controls ($ (v0) + (210:8) $) and ($ (e3) + (200:8) $) .. (e3); 
\draw [ultra thick, draw=none] (e3) to [out=20, in=270] (r2); 
\draw [ultra thick, draw=none] (r2) to [out=90, in=330] (e1); 
\draw [ultra thick, red] (e1) to [out=150, in=330] (v0); 
\draw [ultra thick, draw=none] (v0)
.. controls ($ (v0) + (150:6) $) and ($ (r0) + (180:15) $) .. (r0); 
\draw [ultra thick, draw=none] (r0) to [out=0, in=270] (v3); 
\draw [ultra thick, draw=none] (v3) to [out=90, in=300] (r2); 
\draw [ultra thick, draw=none] (r2) to [out=120, in=0] (v1); 
\draw [ultra thick, draw=none] (v1) to [out=180, in=60] (r1); 
\draw [ultra thick, draw=none] (r1)
.. controls ($ (r1) + (240:5) $) and ($ (e3) + (120:2) $) .. (e3); 
\draw [ultra thick, red] (e3) to [out=300, in=210] (v3); 
\draw [ultra thick, red] (v3) to [out=30, in=300] (e0); 
\draw [ultra thick, draw=none] (e0) to [out=120, in=30] (r3); 

\draw [ultra thick, red] (e0) 
.. controls ($ (e0) + (60:7) $) and ($ (v0) + (60:7) $) .. (v0); 
\draw [ultra thick, draw=none] (v0)
.. controls ($ (v0) + (240:3) $) and ($ (r1) + (150:3) $) .. (r1); 
\draw [ultra thick, draw=none] (r1) to [out=330, in=180] (e2); 
\draw [ultra thick, draw=none] (e2) to [out=0, in=180] (r2); 
\draw [ultra thick, draw=none] (r2) to [out=0, in=240] (e0); 


\foreach \x/\word in {(e0)/e0, (e1)/e1, (e2)/e2, (e3)/e3}
{
\draw [green!50!black, fill=green!50!black] \x circle  (10pt);
}

\foreach \x/\word in {(v0)/v0, (v1)/v1, (v2)/v2, (v3)/v3, (v4)/v4}
{
\draw [blue, fill=blue] \x circle (10pt);
}

\end{tikzpicture}
\begin{tikzpicture}[scale = 0.4]
\coordinate (r3) at (1,6);
\coordinate (e1) at (-1,4); 
\coordinate (v1) at (-1.5, 2);
\coordinate (e2) at (-2,0); 
\coordinate (v2) at (-2.3,-2); 
\coordinate (e3) at (-2.5,-4); 
\coordinate (r0) at (-3,-7); 
\coordinate (v4) at (3,4);
\coordinate (r2) at (2.5,0);
\coordinate (v3) at (4,-3);
\coordinate (e0) at (7,3);
\coordinate (v0) at (-6,6); 
\coordinate (r1) at (-6, 0); 

\coordinate (v0a) at ($ (v0) + (15:1) $);
\coordinate (v0b) at ($ (v0) + (135:1) $);
\coordinate (v0c) at ($ (v0) + (270:1) $);

\coordinate (v1a) at ($ (v1) + (340:1) $);
\coordinate (v1b) at ($ (v1) + (160:1) $);

\coordinate (v2a) at ($ (v2) + (350:1) $);
\coordinate (v2b) at ($ (v2) + (170:1) $);

\coordinate (v3a) at ($ (v3) + (120:1) $);
\coordinate (v3b) at ($ (v3) + (300:1) $);

\coordinate (v4a) at ($ (v4) + (90:1) $);
\coordinate (v4b) at ($ (v4) + (270:1) $);

\coordinate (e0a) at ($ (e0) + (0:1) $);
\coordinate (e0b) at ($ (e0) + (112:1) $);
\coordinate (e0c) at ($ (e0) + (232:1) $);

\coordinate (e1a) at ($ (e1) + (80:1) $);
\coordinate (e1b) at ($ (e1) + (200:1) $);
\coordinate (e1c) at ($ (e1) + (310:1) $);

\coordinate (e2a) at ($ (e2) + (345:1) $);
\coordinate (e2b) at ($ (e2) + (165:1) $);

\coordinate (e3a) at ($ (e3) + (12:1) $);
\coordinate (e3b) at ($ (e3) + (142:1) $);
\coordinate (e3c) at ($ (e3) + (250:1) $);

\draw [draw=none, fill=black!10!white]
(e1b) 
to [out = 250, in = 70] (v1a)
to (v1b)
to [out=70, in=250] (e1c)
to (e1b);

\draw [draw=none, fill=black!10!white]
(v1a)
to [out=250, in=75] (e2b)
to (e2a)
to [out=75, in=250] (v1b)
to (v1a);

\draw [draw=none, fill=black!10!white]
(e2b)
to [out=255, in=80] (v2a)
to (v2b)
to [out=80, in=255] (e2a)
to (e2b);

\draw [draw=none, fill=black!10!white]
(v2a)
to [out=260, in=85] (e3b) 
to (e3a)
to [out=85, in=260] (v2b) 
to (v2a);

\draw [draw=none, fill=black!10!white]
(e3b)
.. controls ($ (e3b) + (200:8) $) and ($ (v0b) + (210:8) $) .. (v0b)
to (v0c)
.. controls ($ (v0c) + (210:8) $) and ($ (e3c) + (200:6) $) .. (e3c)
to (e3b);

\draw [draw=none, fill=black!10!white]
(v0b)
.. controls ($ (v0b) + (60:7) $) and ($ (e0b) + (60:5) $) .. (e0b) 
to (e0a)
.. controls ($ (e0a) + (60:7) $) and ($ (v0a) + (60:6) $) .. (v0a)
to (v0b);

\draw [draw=none, fill=black!10!white]
(e0b)
to [out=165, in=0] (v4b)
to (v4a)
to [out=0, in=165] (e0c)
to (e0c);

\draw [draw=none, fill=black!10!white]
(v4b)
to [out=180, in=10] (e1a)
to (e1c)
to [out=10, in=180] (v4a)
to (v4b);

\draw [draw=none, fill=black!10!white]
(e1a)
to [out=150, in=330] (v0c)
to (v0a)
to [out=330, in=150] (e1b)
to (e1a);

\draw [draw=none, fill=black!10!white]
(e3c)
to [out=330, in=220] (v3a)
to (v3b)
to [out=220, in=320] (e3a)
to (e3c);

\draw [draw=none, fill=black!10!white]
(v3a)
to [out=40, in=300] (e0a)
to (e0c)
to [out=300, in=20] (v3b)
to (v3a);

\draw [draw=none, fill=black!10!white]
(v0a) to (v0b) to (v0c) to (v0a);

\draw [draw=none, fill=black!10!white]
(e0a) to (e0b) to (e0c) to (e0a);

\draw [draw=none, fill=black!10!white]
(e1a) to (e1b) to (e1c) to (e1a);

\draw [draw=none, fill=black!10!white]
(e3a) to (e3b) to (e3c) to (e3a);

\draw [ultra thick, red] (e1) to [out = 250, in = 70] (v1); 
\draw [ultra thick, red] (v1) to (e2); 
\draw [ultra thick, red] (e2) to (v2); 
\draw [ultra thick, red] (v2) to (e3); 
\draw [ultra thick, red] (e0) to [out=165, in=0] (v4); 
\draw [ultra thick, red] (v4) to [out=180, in=10] (e1); 
\draw [ultra thick, red] (v0)
.. controls ($ (v0) + (210:8) $) and ($ (e3) + (200:8) $) .. (e3); 
\draw [ultra thick, red] (e1) to [out=150, in=330] (v0); 
\draw [ultra thick, red] (e3) to [out=300, in=210] (v3); 
\draw [ultra thick, red] (v3) to [out=30, in=300] (e0); 
\draw [ultra thick, red] (e0) 
.. controls ($ (e0) + (60:7) $) and ($ (v0) + (60:7) $) .. (v0); 

\begin{knot}[consider self intersections, 
clip width=2,
flip crossing=2,
flip crossing=4,
flip crossing=6,
flip crossing=8,
flip crossing=11
]
\strand [thick]
(e1b)
to [out = 250, in = 70] (v1a)
to [out=250, in=75] (e2b) 
to [out=255, in=80] (v2a) 
to [out=260, in=85] (e3b) 
.. controls ($ (e3b) + (200:8) $) and ($ (v0b) + (210:8) $) .. (v0b)
.. controls ($ (v0b) + (60:7) $) and ($ (e0b) + (60:5) $) .. (e0b) 
to [out=165, in=0] (v4b)
to [out=180, in=10] (e1a)
to [out=150, in=330] (v0c)
.. controls ($ (v0c) + (210:8) $) and ($ (e3c) + (200:6) $) .. (e3c) 
to [out=330, in=220] (v3a) 
to [out=40, in=300] (e0a) 
.. controls ($ (e0a) + (60:7) $) and ($ (v0a) + (60:6) $) .. (v0a) 
to [out=330, in=150] (e1b);
\strand[thick]
(e1c)
to [out=10, in=180] (v4a)
to [out=0, in=165] (e0c)
to [out=300, in=20] (v3b) 
to [out=220, in=320] (e3a) 
to [out=85, in=260] (v2b) 
to [out=80, in=255] (e2a) 
to [out=75, in=250] (v1b) 
to [out=70, in=250] (e1c); 
\end{knot}


\foreach \x/\word in {(e0)/e0, (e1)/e1, (e2)/e2, (e3)/e3}
{
\draw [green!50!black, fill=green!50!black] \x circle  (10pt);
}

\foreach \x/\word in {(v0)/v0, (v1)/v1, (v2)/v2, (v3)/v3, (v4)/v4}
{
\draw [blue, fill=blue] \x circle (10pt);
}


\end{tikzpicture}
\end{center}
\caption{The median construction for the red graph of the trinity of figure \ref{fig:trinity}.}
\label{fig:median_construction}
\end{figure}

Viewing the plane $\R^2$ from above, one side of $F_G$ faces up at each vertex of $G$. Because of the half twist along an edge, if $v_0, v_1$ are vertices connected by an edge then opposite sides of $F_G$ face up at $v_0$ and $v_1$. So $F_G$ is orientable if and only if $G$ is bipartite. 

Thus, if $G$ is a plane bipartite graph, then the median construction yields an orientable surface $F_G$ bounding a link $L_G$. After orienting $F_G$, $L_G$ can be oriented as the boundary of $F_G$. Then $L_G$ is an oriented link with Seifert surface $F_G$. Indeed, $F_G$ is obtained by applying Seifert's algorithm to $L_G$.

The diagram obtained for $L_G$ is a \emph{special} alternating link diagram. A special alternating diagram is one in which every Seifert circle is innermost. Indeed, the Seifert circles of $L_G$ run around the vertices of $G$ in $S^2$, so are all innermost; there is no nesting. 

It is known that applying Seifert's algorithm to an alternating diagram yields a minimal genus Seifert surface \cite{Crowell59, Gabai86_genera, Murasugi58}. Further, $L_G$ is non-split. Thus the median construction on a plane bipartite graph yields a minimal genus Seifert surface $F_G$ for a non-split special alternating link $L_G$. A converse is also true: any minimal genus Seifert surface of a non-split prime special alternating link arises from the median construction on a connected bipartite plane graph \cite{Banks11, Hirasawa-Sakuma96}.

\subsection{The HOMFLY-PT polynomial and the root polytope}

K\'{a}lm\'{a}n--Murakami showed in \cite{K-Murakami} that the link $L_G$, in particular its HOMFLY-PT polynomial, is closely related to the root polytope $\QQ_G$ of the plane bipartite graph $G$.

Denote the HOMFLY-PT polynomial of an oriented link $L$ by $P_L (v,z)$. 
Roughly, K\'{a}lm\'{a}n--Murakami showed that the ``top"-degree terms of $P_{L_G} (v,z)$ correspond to the \emph{$h$-vector} of $\QQ_G$.

The $h$-vector is one of several vectors associated to a simplicial complex, encoding its combinatorial structure. The root polytope $\QQ_G$ is not itself a simplicial complex, just a polytope. But a \emph{triangulation} of $\QQ_G$ provides it with the structure of a simplicial complex. And we have seen above how the root polytope $\QQ_G$ has certain natural triangulations, deriving from spanning trees and arborescences. According to theorem \ref{thm:KM_root_polytope_triangulation}, the root polytopes of spanning trees of $G$ dual to arborescences of $G^*$ triangulate $\QQ_G$. This triangulation of $\QQ_G$ is a simplicial complex, and hence has an $h$-vector.

To define the $h$-vector, we first define the \emph{$f$-vector}. The $f$-vector of a $d$-dimensional simplicial complex can be given as a polynomial
\[
f(y) = y^{d+1} + f_0 y^d + f_1 y^{d-1} + \cdots + f_{d-1} y + f_d,
\]
where $f_k$ is the number of $k$-dimensional simplices in the complex. The $h$-vector is then defined by $h(x) = f(x-1)$. So the $f$- and $h$-vectors encode numbers of simplices of each dimension. 

K\'{a}lm\'{a}n--Murakami proved that the $h$-vector of this natural triangulation of $\QQ_G$ is essentially the ``top" of $P_{L_G} (v,z)$. Precisely, we define the \emph{top} of $P_L (v,z)$ to be the polynomial in $v$ consisting of terms that contribute to the leading term of $P_L (1,z)$ . (That is, the top of $P_L (v,z)$ consists of those terms with highest $z$-degree which don't cancel out upon setting $v=1$, with the $z$'s then removed.) Setting $v=1$ in $P_L (v,z)$ is a natural substitution to make: it yields the Alexander-Conway polynomial.

\begin{thm}[K\'{a}lm\'{a}n--Murakami \cite{K-Murakami} thm. 1.3]
\label{thm:HOMFLY_h-vector}
Let $G$ be a connected plane bipartite graph with $V$ vertices and $E$ edges. Then the top of the HOMFLY-PT polynomial $P_{L_G} (v,z)$ is
\[
v^{E+V-1} h(v^{-2}),
\]
where $h$ is the $h$-vector of a triangulation of the root polytope $\QQ_G$ described in theorem \ref{thm:KM_root_polytope_triangulation}.
\end{thm}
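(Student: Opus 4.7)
The plan is to combine a spanning-tree state sum for $P_{L_G}(v,z)$ with the shellable triangulation of $\QQ_G$ from Theorem \ref{thm:KM_root_polytope_triangulation} and match the two expansions term by term.

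First I would set up a skein state sum. The median construction yields a special alternating diagram whose Seifert circles surround the vertices of $G$ and whose crossings are in bijection with the edges of $G$, so iterated application of the HOMFLY skein relation expands $P_{L_G}(v,z)$ as a weighted sum over spanning subgraphs $H \subseteq G$, each weight a monomial in $v$ and $z$ computable from the number of edges, connected components and independent cycles of $H$ together with a writhe correction for the alternating diagram. To isolate the top I would substitute $v=1$: this recovers the Alexander--Conway polynomial, whose $z$-degree is bounded by the Seifert genus of $F_G$, and a direct Euler-characteristic comparison shows that subgraphs containing an extra cycle or an isolated vertex contribute with strictly smaller $z$-degree. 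Thus only spanning trees of $G$ survive in the top, yielding an expression of the form $v^{E+V-1} \sum_T v^{-2 a(T)}$ for some nonnegative integer activity $a(T)$ read off from the state-sum weight; sign coherence follows from the alternating structure.

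The crux is then to identify $\sum_T v^{-2 a(T)}$ with $h(v^{-2})$. By Corollary \ref{cor:hypertree_arborescence_bijection}, arborescence-dual spanning trees of $G$ are in bijection with hypertrees, and by Theorem \ref{thm:KM_root_polytope_triangulation} their root polytopes form a triangulation of $\QQ_G$ whose shellability \cite[prop. 1.2]{K-Murakami} assigns to each facet $\QQ_T$ a shelling index $\iota(T)$ so that $h(x) = \sum_T x^{\iota(T)}$ summed over arborescence-dual $T$. Using the Cayley trick (Theorem \ref{thm:cayley_trick}) together with the Minkowski-cell decomposition of Theorem \ref{thm:trimmed_GP_spanning_tree_polytopes}, I would argue by a deletion-contraction or hypertree-grouping step that the skein contributions of spanning trees sharing a common hypertree collapse onto the unique arborescence-dual representative, reducing the state sum to a sum indexed by facets of the triangulation.

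The main obstacle will be the final combinatorial identity $a(T) = \iota(T)$ for each arborescence-dual spanning tree $T$. I would attack it by choosing a total edge order on $G$ compatible with both K\'{a}lm\'{a}n--Murakami's shelling order and a Tutte-style internal/external activity on spanning trees, then verifying that the activity extracted from the skein expansion coincides with the ridge-counting statistic defining $\iota$. Once this identification is in place, summation gives $v^{E+V-1} \sum_T v^{-2\iota(T)} = v^{E+V-1} h(v^{-2})$, as required.
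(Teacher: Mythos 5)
The paper does not prove this theorem; it states it as a cited result of K\'{a}lm\'{a}n--Murakami and verifies it only on the running example of the torus link. So the comparison here is really a judgment of your proposed argument on its own terms.

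Your proposal has the right general shape but contains two gaps that are, in effect, the whole content of the theorem. First, the step ``the skein contributions of spanning trees sharing a common hypertree collapse onto the unique arborescence-dual representative'' is asserted, not argued. A skein-theoretic state sum for the HOMFLY polynomial of an alternating diagram is naturally indexed by states (or, after translation, by certain spanning subgraphs), and a priori every spanning tree contributes with its own weight. Reducing from ``all spanning trees'' to ``one representative per hypertree'' is exactly the place where one needs a substitute for Tutte's internal/external activity that is adapted to hypergraphs rather than graphs. In K\'{a}lm\'{a}n's framework the relevant object is the \emph{interior polynomial} of the hypergraph $\HH$, defined as a generating function over hypertrees by an internal activity statistic; the HOMFLY--$h$-vector theorem in \cite{K-Murakami} is proved by showing that the top of $P_{L_G}$ and the $h$-vector of the shellable triangulation both equal this interior polynomial. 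Your plan omits this intermediary entirely, and without it the grouping you invoke has no mechanism to occur.

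Second, the ``main obstacle'' you flag, $a(T)=\iota(T)$, is left as the remaining work, but that identity essentially \emph{is} the theorem once the framework is set up correctly. Proposing to attack it by choosing a total edge order compatible with both a shelling order and a Tutte-style activity is not a plan so much as a restatement of what needs to be proved; moreover, Tutte-style internal activity on spanning trees of $G$ is not the right statistic here (distinct trees with the same hypertree can have different Tutte activities), which is why K\'{a}lm\'{a}n's activity is defined directly on hypertrees. Finally, the claim ``sign coherence follows from the alternating structure'' and the reduction to spanning trees after setting $v=1$ are each plausible in spirit but require actual degree bookkeeping in the skein expansion (the weight of a state depends on its number of components, not only its Euler characteristic), and you have not supplied it. As written, the proposal identifies the relevant ingredients (shellability, arborescence--hypertree bijection, Alexander specialization) but does not constitute a proof.
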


Thus, fixing a root vertex of $G^*$, and taking spanning trees of $G$ dual to arborescences of $G^*$, their root polytopes triangulate $\QQ_G$, and the $h$-vector of this triangulation is, up to a shift in powers of $v$, the top of the HOMFLY-PT polynomial $P_{L_G} (v,z)$.

(It follows that the $h$-vector of the triangulation of $\QQ_G$ does not depend on the choice of root vertex of $G^*$. For each such choice, the corresponding triangulation of $\QQ_G$ has the same $h$-vector.)

We illustrate with the example of the bipartite plane graph $G$ of figure \ref{fig:graph_G}. The median construction on $G$ yields a torus link $L_G$ of $2$ components, as shown in figure \ref{fig:easy_median_construction}. (The degree 1 vertex of  $G$ creates a loop in $L_G$, which can be removed by a Reidemeister I move.) The HOMFLY-PT polynomial of $L_G$ is easily calculated as
\[
P_{L_G} (v,z) = v^4 + v^3 z + vz.
\]
Setting $v=1$ yields $1+2z$, and the leading term $2z$ arises from the terms $v^3 z + v z$ of $P_{L_G} (v,z)$. Thus 
\[
\text{Top of } P_{L_G} = v^3 + v.
\]

On the other hand, as discussed above in section \ref{sec:arborescences_root_polytope}, the triangulation of $\QQ_G$ given by theorem \ref{thm:KM_root_polytope_triangulation} is the triangulation $\QQ_{T_1} \cup \QQ_{T_4}$, where $T_1, T_4$ are as in figure \ref{fig:spanning_trees_in_G}. This triangulation is shown in figure \ref{fig:polytope_slices} (left). This is for the standard ``outer" choice of root vertex of $G^*$.

We observe that this triangulation has $5$ vertices, $9$ edges, $7$ triangles and $2$ tetrahedra, so its $f$-vector is
\[
f(y) = y^4 + 5y^3 + 9y^2 + 7y + 2.
\]
Hence its $h$-vector is
\[
h(x) = f(x-1) = x^4 + x^3.
\]
Since $G$ has $5$ vertices and $5$ edges, we verify the claim of theorem 
\ref{thm:HOMFLY_h-vector}:
\[
v^{E+V-1} h(v^{-1} ) = v^9 (v^{-8} + v^{-6}) = v + v^3 = \text{Top of} P_{L_G}.
\]

Incidentally, as mentioned in section \ref{sec:arborescences_root_polytope}, if we take the other choice of root vertex for $G^*$, then we obtain the triangulation $\QQ_{T_2} \cup \QQ_{T_3}$ of $\QQ_G$, depicted in figure \ref{fig:polytope_slices} (right). This triangulation has the same $f$- and $h$-vectors, and yields the same result.

\section{Sutured manifolds and Floer homology}
\label{sec:sutured}

The discussion so far --- graphs, trees, links, polytopes --- now moves into 3-manifold topology and holomorphic curves: in particular, to sutured manifolds, and Floer homology.

\subsection{Sutured manifolds}
\label{sec:sutured_manifolds}

Roughly, for present purposes, a \emph{sutured surface} is a surface with some oriented curves drawn on its boundary, which behave nicely with respect to orientations; and a \emph{sutured 3-manifold} is a 3-manifold whose boundary is a sutured surface. 

More precisely, we define a \emph{sutured surface} $(S, \Gamma)$ to consist of a smooth closed oriented surface $S$, together with a smooth oriented 1-submanifold $\Gamma$ of $S$, satisfying the following condition: $S \backslash \Gamma = R_+ \sqcup R_-$, where $R_+, R_-$ are oriented subsurfaces of $S$, and $\partial R_+ = -\partial R_- = \Gamma$ as oriented 1-manifolds. Additionally, we require that $\Gamma$ has nonempty intersection with each component of $S$, i.e. every component of $S$ contains a curve of $\Gamma$. The curves of $\Gamma$ are called \emph{sutures}. 

Effectively the orientation condition on sutures just means that we can label the complementary regions of $\Gamma$ in $S$ as positive ($R_+$) or negative ($R_-$) such that, whenever we cross over a curve of $\Gamma$, we pass from a positive to a negative region, or vice versa. Note that the orientations on $S$ and $\Gamma$ determine $R_+$ and $R_-$ uniquely. See figure \ref{fig:sutured_nonsutured_surfaces}.

\begin{figure}
\begin{center}
\begin{tikzpicture}[scale=0.8]

\draw (-40mm,0mm) to[out=90,in=180] (0mm,20mm) to[out=0,in=90] (40mm,0mm);
\draw (-40mm,0mm) to[out=-90,in=180] (0mm,-20mm) to[out=0,in=-90] (40mm,0mm);

\draw (-8mm,0mm) to[out=40,in=180] (0mm,4mm) to[out=0,in=140] (8mm,0mm);
\draw (-8mm,0mm) to[out=-40,in=180] (0mm,-4mm) to[out=0,in=-140] (8mm,0mm);
\draw (-8mm,0mm) to[out=140,in=130] (-10mm,2mm);
\draw (8mm,0mm) to[out=40,in=50] (10mm,2mm);

\draw[red, thick, ->] (-30mm, 0mm) to[out=90,in=180] (0mm,15mm) to[out=0,in=90] (30mm,0mm);
\draw[red,thick,->] (30mm,0mm) to[out=-90,in=0] (0mm,-15mm) to[out=180,in=-90] (-30mm,0mm);

\draw[red, thick, <-] (-20mm, 0mm) to[out=90,in=180] (0mm,10mm) to[out=0,in=90] (20mm,0mm);
\draw[red,thick,<-] (20mm,0mm) to[out=-90,in=0] (0mm,-10mm) to[out=180,in=-90] (-20mm,0mm);

\draw (15mm,0mm) node {$+$};
\draw (25mm,0mm) node {$-$};
\draw (35mm,0mm) node {$+$};

\end{tikzpicture}
\hspace{2 cm}
\begin{tikzpicture}[scale=0.8]

\draw (-40mm,0mm) to[out=90,in=180] (0mm,20mm) to[out=0,in=90] (40mm,0mm);
\draw (-40mm,0mm) to[out=-90,in=180] (0mm,-20mm) to[out=0,in=-90] (40mm,0mm);

\draw (-8mm,0mm) to[out=40,in=180] (0mm,4mm) to[out=0,in=140] (8mm,0mm);
\draw (-8mm,0mm) to[out=-40,in=180] (0mm,-4mm) to[out=0,in=-140] (8mm,0mm);
\draw (-8mm,0mm) to[out=140,in=130] (-10mm,2mm);
\draw (8mm,0mm) to[out=40,in=50] (10mm,2mm);

\draw[red, thick, <-] (-25mm, 0mm) to[out=90,in=180] (0mm,12.5mm) to[out=0,in=90] (25mm,0mm);
\draw[red,thick,<-] (25mm,0mm) to[out=-90,in=0] (0mm,-12.5mm) to[out=180,in=-90] (-25mm,0mm);

\draw (15mm,0mm) node {$+?$};
\draw (35mm,0mm) node {$-?$};

\end{tikzpicture}
\end{center}
\caption{An example and a non-example of a sutured surface. Left: there are two sutures, and coherent orientations $+,-$ are assigned to the two annular complementary regions. Right: there is only one complementary region, which cannot be assigned a coherent orientation.}
\label{fig:sutured_nonsutured_surfaces}
\end{figure}
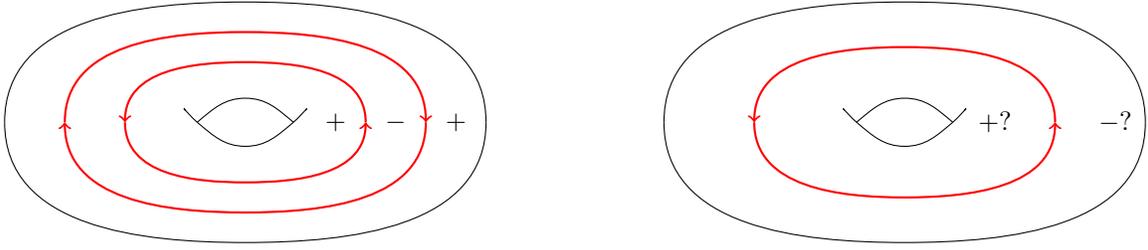

We can then define a \emph{sutured 3-manifold} $(M, \Gamma)$ to be a smooth oriented 3-manifold $M$, together with a smooth oriented 1-submanifold $\Gamma$ of $\partial M$, such that $(\partial M, \Gamma)$ is a sutured surface.

\subsection{From graph to sutured manifold}
\label{sec:graph_to_sutured_manifold}

We now show how to construct a sutured 3-manifold $(M_G, L_G)$ from a bipartite plane graph $G$. The construction is a slight extension of the median construction of section \ref{sec:median_construction}; indeed, the oriented link $L_G$ forms the set of sutures. 

Recall that $L_G$, and the Seifert surface $F_G$, are constructed by taking a neighbourhood of $G$ in $\R^2$, consisting of a disc around each vertex and a band along each edge, and inserting a negative half-twist in each band.

Let $N(G)$ be a regular neighbourhood of $G$ in $S^3$. We can take $N(G)$ so that the link $L_G$ is contained in its boundary $\partial N(G)$. Then we define $M_G = S^3 - N(G)$. So $M_G$ is a 3-manifold with boundary, and $L_G$ is an oriented 1-submanifold of $\partial M_G$. See figures \ref{fig:easy_M_G} and \ref{fig:M_G} for the construction of $(M_G, L_G)$ in the examples of figure \ref{fig:easy_median_construction}
 and \ref{fig:median_construction} respectively.

We can alternatively view $(M_G, L_G)$ as constructed by splitting $S^3$ open along the Seifert surface $F_G$. We see that $\partial M_G$ consists of two copies $F^+_G, F^-_G$ of $F_G$ meeting along $L_G$. Orienting $F^+_G, F^-_G$ via the boundary orientation of $M_G \subset S^3$, we have $\partial F^+_G = - \partial F^-_G = L_G$. Thus $(M_G, L_G)$ is a sutured 3-manifold, and the two copies $F^+_G, F^-_G$ of $F_G$ form the positive and negative subsurfaces $R_+, R_-$ respectively.

In our first example (figure \ref{fig:easy_M_G}), the boundary $\partial M_G = \partial N(G)$ is a torus, and in our picture, the manifold $M_G$ consists of everything in $S^3$ ``outside" the torus. As $S^3$ decomposes into two solid tori, $M_G$ is a solid torus with a pair of sutures along its boundary. In our second example (figure \ref{fig:M_G}), $\partial M_G$ is a genus $3$ closed orientable surface, and $M_G$ is everything ``outside" it in $S^3$. Since $\partial M_G$ splits $S^3$ into two genus-3 handlebodies, $M_G$ is a genus-3 handlebody.

In general, for a connected bipartite plane graph $G$, the manifold $M_G$ is a handlebody, of genus equal to the first Betti number $b_1 (G)$ of $G$.

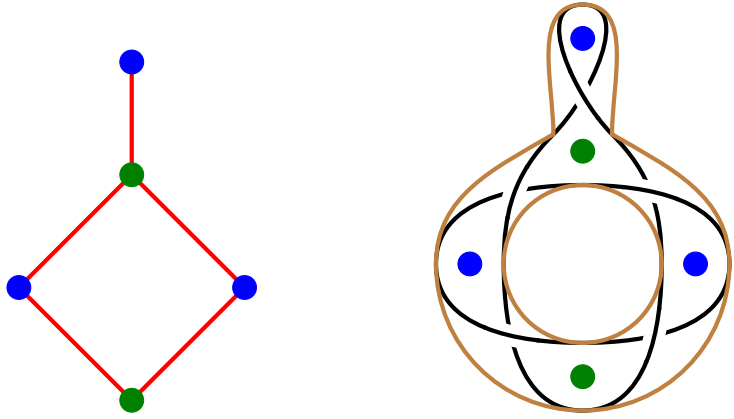
\begin{figure}
\begin{center}
\begin{tikzpicture}[scale = 1.5]
\coordinate (e1) at (0,0);
\coordinate (e2) at (0,-2);
\coordinate (v1) at (0,1);
\coordinate (v2) at (1,-1);
\coordinate (v3) at (-1,-1);
\coordinate (r1) at (0,-1);
\coordinate (r2) at (0,2);

\draw [ultra thick, red] (e1) -- (v1);
\draw [ultra thick, red] (e1) -- (v2);
\draw [ultra thick, red] (e1) -- (v3);
\draw [ultra thick, red] (e2) -- (v2);
\draw [ultra thick, red] (e2) -- (v3);

\foreach \x in {(e1), (e2)}
{
\draw [green!50!black, fill=green!50!black] \x circle  (3pt);
}

\foreach \x in {(v1), (v2), (v3)}
{
\draw [blue, fill=blue] \x circle  (3pt);
}
\end{tikzpicture}
\hspace{2 cm}
\begin{tikzpicture}[scale = 1.5]
\coordinate (e1) at (0,0);
\coordinate (e2) at (0,-2);
\coordinate (v1) at (0,1);
\coordinate (v2) at (1,-1);
\coordinate (v3) at (-1,-1);
\coordinate (r1) at (0,-1);
\coordinate (r2) at (0,2);

\coordinate (e1a) at ($ (e1) + (30:0.3) $);
\coordinate (e1b) at ($ (e1) + (150:0.3) $);
\coordinate (e1c) at ($ (e1) + (270:0.3) $);

\coordinate (e2a) at ($ (e2) + (90:0.3) $);
\coordinate (e2b) at ($ (e2) + (270:0.3) $);

\coordinate (v1a) at ($ (v1) + (90:0.3) $);

\coordinate (v2a) at ($ (v2) + (0:0.3) $);
\coordinate (v2b) at ($ (v2) + (180:0.3) $);

\coordinate (v3a) at ($ (v3) + (0:0.3) $);
\coordinate (v3b) at ($ (v3) + (180:0.3) $);


\begin{knot}[consider self intersections, 
clip width=5,
flip crossing=2,
flip crossing=4
]
\strand [ultra thick]
(v1a)
to [out = 180, in = 135] (e1a)
to [out = 315, in = 90] (v2b)
to [out = 270, in = 0] (e2b)
to [out = 180, in = 270] (v3a)
to [out = 90=, in = 225] (e1b)
to [out = 45, in = 0] (v1a);
\strand[ultra thick]
(v2a)
to [out = 90, in = 0] (e1c)
to [out = 180, in = 90] (v3b)
to [out = 270, in = 180] (e2a)
to [out = 0, in = 270] (v2a);
\end{knot}

\foreach \x in {(e1), (e2)}
{
\draw [green!50!black, fill=green!50!black] \x circle  (3pt);
}

\foreach \x in {(v1), (v2), (v3)}
{
\draw [blue, fill=blue] \x circle  (3pt);
}


\draw [ultra thick, brown] (v1a) 
to [out = 180, in = 90] (e1b)
to [out = 210, in = 90] (v3b)
to [out = 270, in = 180] (e2b)
to [out = 0, in = 270] (v2a)
to [out = 90, in = 330] (e1a)
to [out = 90, in = 0] (v1a);

\draw [ultra thick, brown] (e1c)
to [out = 180, in = 90] (v3a)
to [out = 270, in = 180] (e2a)
to [out = 0, in = 270] (v2b)
to [out = 90, in = 0] (e1c);

\end{tikzpicture}
\end{center}
\caption{Left: The bipartite plane graph $G$ from figure \ref{fig:graph_G}, whose median construction was seen in figure \ref{fig:easy_median_construction}. Right: The sutured manifold $(M_G, L_G)$. The boundary $\partial M_G = \partial N(G)$, indicated in brown, is a torus. The sutures $L_G$ are drawn in black, and form a torus link.}
\label{fig:easy_M_G}
\end{figure}

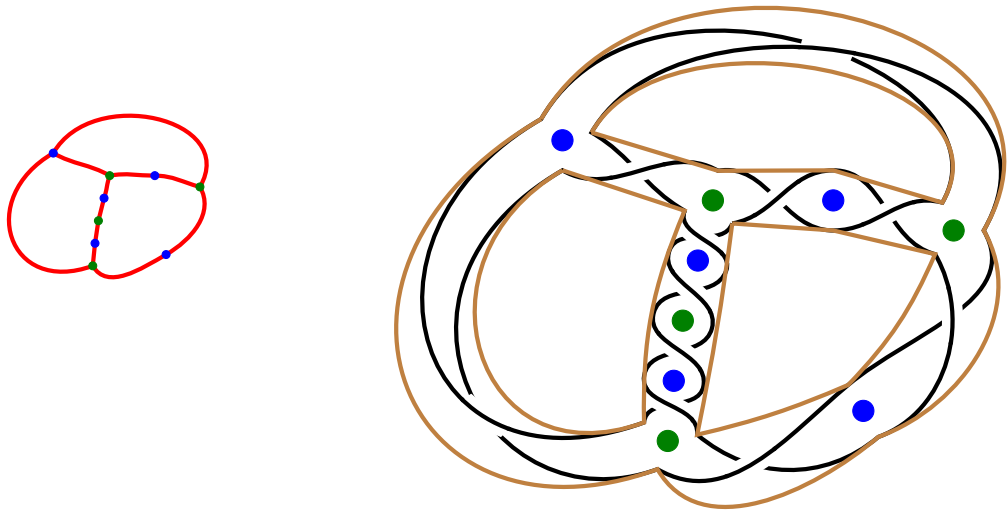
\begin{figure}
\begin{center}
\begin{minipage}{0.3\textwidth}
\begin{tikzpicture}[scale = 0.15]
\coordinate (r3) at (1,6);
\coordinate (e1) at (-1,4); 
\coordinate (v1) at (-1.5, 2);
\coordinate (e2) at (-2,0); 
\coordinate (v2) at (-2.3,-2); 
\coordinate (e3) at (-2.5,-4); 
\coordinate (r0) at (-3,-7); 
\coordinate (v4) at (3,4);
\coordinate (r2) at (2.5,0);
\coordinate (v3) at (4,-3);
\coordinate (e0) at (7,3);
\coordinate (v0) at (-6,6); 
\coordinate (r1) at (-6, 0); 

\draw [ultra thick, draw=none] (r3) to[out=210,in=70] (e1); 
\draw [ultra thick, red] (e1) to [out = 250, in = 70] (v1); 
\draw [ultra thick, red] (v1) to (e2); 
\draw [ultra thick, red] (e2) to (v2); 
\draw [ultra thick, red] (v2) to (e3); 
\draw [ultra thick, draw=none] (e3) to [out=270, in=90] (r0); 
\draw [ultra thick, draw=none] (r0)
.. controls ($ (r0) + (270:8) $) and ($ (e0) + (345:10) $) .. (e0); 
\draw [ultra thick, red] (e0) to [out=165, in=0] (v4); 
\draw [ultra thick, red] (v4) to [out=180, in=10] (e1); 
\draw [ultra thick, draw=none] (e1) to [out=190, in=90] (r1); 
\draw [ultra thick, draw=none] (r1) to [out=270, in=170] (v2); 
\draw [ultra thick, draw=none] (v2) to [out=350, in=210] (r2); 
\draw [ultra thick, draw=none] (r2) to [out=30, in=300] (v4); 
\draw [ultra thick, draw=none] (v4) to [out=120, in=300] (r3); 
\draw [ultra thick, draw=none] (r3) to [out=120, in=30] (v0); 
\draw [ultra thick, red] (v0)
.. controls ($ (v0) + (210:8) $) and ($ (e3) + (200:8) $) .. (e3); 
\draw [ultra thick, draw=none] (e3) to [out=20, in=270] (r2); 
\draw [ultra thick, draw=none] (r2) to [out=90, in=330] (e1); 
\draw [ultra thick, red] (e1) to [out=150, in=330] (v0); 
\draw [ultra thick, draw=none] (v0)
.. controls ($ (v0) + (150:6) $) and ($ (r0) + (180:15) $) .. (r0); 
\draw [ultra thick, draw=none] (r0) to [out=0, in=270] (v3); 
\draw [ultra thick, draw=none] (v3) to [out=90, in=300] (r2); 
\draw [ultra thick, draw=none] (r2) to [out=120, in=0] (v1); 
\draw [ultra thick, draw=none] (v1) to [out=180, in=60] (r1); 
\draw [ultra thick, draw=none] (r1)
.. controls ($ (r1) + (240:5) $) and ($ (e3) + (120:2) $) .. (e3); 
\draw [ultra thick, red] (e3) to [out=300, in=210] (v3); 
\draw [ultra thick, red] (v3) to [out=30, in=300] (e0); 
\draw [ultra thick, draw=none] (e0) to [out=120, in=30] (r3); 

\draw [ultra thick, red] (e0) 
.. controls ($ (e0) + (60:7) $) and ($ (v0) + (60:7) $) .. (v0); 
\draw [ultra thick, draw=none] (v0)
.. controls ($ (v0) + (240:3) $) and ($ (r1) + (150:3) $) .. (r1); 
\draw [ultra thick, draw=none] (r1) to [out=330, in=180] (e2); 
\draw [ultra thick, draw=none] (e2) to [out=0, in=180] (r2); 
\draw [ultra thick, draw=none] (r2) to [out=0, in=240] (e0); 


\foreach \x/\word in {(e0)/e0, (e1)/e1, (e2)/e2, (e3)/e3}
{
\draw [green!50!black, fill=green!50!black] \x circle  (10pt);
}

\foreach \x/\word in {(v0)/v0, (v1)/v1, (v2)/v2, (v3)/v3, (v4)/v4}
{
\draw [blue, fill=blue] \x circle (10pt);
}

\end{tikzpicture}
\end{minipage}
\begin{minipage}{0.6\textwidth}
\begin{tikzpicture}[scale = 0.4]
\coordinate (r3) at (1,6);
\coordinate (e1) at (-1,4); 
\coordinate (v1) at (-1.5, 2);
\coordinate (e2) at (-2,0); 
\coordinate (v2) at (-2.3,-2); 
\coordinate (e3) at (-2.5,-4); 
\coordinate (r0) at (-3,-7); 
\coordinate (v4) at (3,4);
\coordinate (r2) at (2.5,0);
\coordinate (v3) at (4,-3);
\coordinate (e0) at (7,3);
\coordinate (v0) at (-6,6); 
\coordinate (r1) at (-6, 0); 

\coordinate (v0a) at ($ (v0) + (15:1) $);
\coordinate (v0b) at ($ (v0) + (135:1) $);
\coordinate (v0c) at ($ (v0) + (270:1) $);

\coordinate (v1a) at ($ (v1) + (340:1) $);
\coordinate (v1b) at ($ (v1) + (160:1) $);

\coordinate (v2a) at ($ (v2) + (350:1) $);
\coordinate (v2b) at ($ (v2) + (170:1) $);

\coordinate (v3a) at ($ (v3) + (120:1) $);
\coordinate (v3b) at ($ (v3) + (300:1) $);

\coordinate (v4a) at ($ (v4) + (90:1) $);
\coordinate (v4b) at ($ (v4) + (270:1) $);

\coordinate (e0a) at ($ (e0) + (0:1) $);
\coordinate (e0b) at ($ (e0) + (112:1) $);
\coordinate (e0c) at ($ (e0) + (232:1) $);

\coordinate (e1a) at ($ (e1) + (80:1) $);
\coordinate (e1b) at ($ (e1) + (200:1) $);
\coordinate (e1c) at ($ (e1) + (310:1) $);

\coordinate (e2a) at ($ (e2) + (345:1) $);
\coordinate (e2b) at ($ (e2) + (165:1) $);

\coordinate (e3a) at ($ (e3) + (12:1) $);
\coordinate (e3b) at ($ (e3) + (142:1) $);
\coordinate (e3c) at ($ (e3) + (250:1) $);


\begin{knot}[consider self intersections, 
clip width=5,
flip crossing=2,
flip crossing=4,
flip crossing=6,
flip crossing=8,
flip crossing=11
]
\strand [ultra thick]
(e1b)
to [out = 250, in = 70] (v1a)
to [out=250, in=75] (e2b) 
to [out=255, in=80] (v2a) 
to [out=260, in=85] (e3b) 
.. controls ($ (e3b) + (200:8) $) and ($ (v0b) + (210:8) $) .. (v0b)
.. controls ($ (v0b) + (60:7) $) and ($ (e0b) + (60:5) $) .. (e0b) 
to [out=165, in=0] (v4b)
to [out=180, in=10] (e1a)
to [out=150, in=330] (v0c)
.. controls ($ (v0c) + (210:8) $) and ($ (e3c) + (200:6) $) .. (e3c) 
to [out=330, in=220] (v3a) 
to [out=40, in=300] (e0a) 
.. controls ($ (e0a) + (60:7) $) and ($ (v0a) + (60:6) $) .. (v0a) 
to [out=330, in=150] (e1b);
\strand[ultra thick]
(e1c)
to [out=10, in=180] (v4a)
to [out=0, in=165] (e0c)
to [out=300, in=20] (v3b) 
to [out=220, in=320] (e3a) 
to [out=85, in=260] (v2b) 
to [out=80, in=255] (e2a) 
to [out=75, in=250] (v1b) 
to [out=70, in=250] (e1c); 
\end{knot}








\foreach \x/\word in {(e0)/e0, (e1)/e1, (e2)/e2, (e3)/e3}
{
\draw [green!50!black, fill=green!50!black] \x circle  (10pt);
}

\foreach \x/\word in {(v0)/v0, (v1)/v1, (v2)/v2, (v3)/v3, (v4)/v4}
{
\draw [blue, fill=blue] \x circle (10pt);
}

\foreach \x/\word in {(v0a)/a, (v0b)/b, (v0c)/c, (v1a)/a, (v1b)/b, (v2a)/a, (v2b)/b, (v3a)/a, (v3b)/b, (v4a)/a, (v4b)/b, (e0a)/a, (e0b)/b, (e0c)/c, (e1a)/a, (e1b)/b, (e1c)/c, (e2a)/a, (e2b)/b, (e3a)/a, (e3b)/b, (e3c)/c}
{
}


\draw [ultra thick, brown] (v0c) 
.. controls ($ (v0c) + (210:6) $) and ($ (e3b) + (200:6) $) .. (e3b)
to [bend left=12] (e1b)
to (v0c);

\draw [ultra thick, brown] (e1c)
to [bend left=2] (e3a)
to [bend right=5] (v3a)
to [bend right=10] (e0c)
to (v4b)
to (e1c);

\draw [ultra thick, brown] (v0a)
to (e1a)
to (v4a)
to (e0b)
.. controls ($ (e0b) + (60:5) $) and ($ (v0a) + (60:5) $) .. (v0a);

\draw [ultra thick, brown] (v0b)
.. controls ($ (v0b) + (210:10) $) and ($ (e3c) + (200:9) $) .. (e3c)
to [out=300, in=220] (v3b) 
to [out=20, in=300] (e0a)
.. controls ($ (e0a) + (60:8) $) and ($ (v0b) + (60:8) $) .. (v0b);

\end{tikzpicture}
\end{minipage}
\end{center}
\caption{Left: The red graph of the trinity of figure \ref{fig:trinity}, whose median construction was seen in figure \ref{fig:median_construction}. Right: The sutured manifold $(M_G, L_G)$. The boundary $\partial M_G = \partial N(G)$ is indicated in brown. The set of sutures/link $L_G$ is drawn in black.}
\label{fig:M_G}
\end{figure}

The median construction of $(M_G, L_G)$ is actually a special case of a more general construction. For any oriented link $L \subset S^3$ with a Seifert surface $F$, we may split $S^3$ along $F$ to obtain a sutured 3-manifold, which we denote $S^3 (F)$. The boundary of $S^3 (F)$ consists of two copies $F^+, F^-$ of $F$ meeting along $L$, and $S^3 (F)$ is naturally a sutured 3-manifold, with set of sutures $L$ and positive and negative regions $F^+, F^-$. We observe that $(M_G, L_G) = S^3 ( F_G )$, where $F_G$ is the Seifert surface from the median construction on $G$.

\subsection{Sutured Floer homology}
\label{sec:SFH}

Sutured Floer homology is an invariant of certain sutured 3-manifolds, introduced by Juh\'{a}sz in \cite{Ju06}. It extends Ozsv\'{a}th--Szab\'{o}'s theory of Heegaard Floer homology \cite{OS04Prop, OS04Closed, OS06} --- which provides invariants of closed 3-manifolds --- to the case of sutured manifolds. Heegaard Floer theory itself builds upon the work of Floer \cite{Floer_symplectic_fixed_points} and the pseudoholomorphic curve theory of Gromov  \cite{Grom}. 

Precisely, sutured Floer homology $SFH(M, \Gamma)$ is an invariant of \emph{balanced} sutured 3-manifolds $(M, \Gamma)$: the balanced condition means that $\chi(R_+) = \chi(R_-)$. 
It is defined in several steps, which we now discuss very roughly. The details of the construction in the rest of this section are, however, not needed for the sequel.
	
The first step is to find a Heegaard diagram $(\Sigma, \alpha, \beta)$ for $(M, \Gamma)$. As in a Heegaard diagram for a closed 3-manifold, $\Sigma$ is an oriented surface, and $\alpha$ and $\beta$ are collections of disjoint closed curves on $\Sigma$, with curves of $\alpha$ bounding discs on one side of $\Sigma$, and curves of $\beta$ bounding discs on the other side of $\Sigma$. However, in the sutured case $\Sigma$ has boundary. The number of curves in $\alpha$ and $\beta$ must be equal (this is equivalent to the balanced condition), but (unlike the closed case) this number may be less than the genus of $\Sigma$. The 3-manifold $M$ can be constructed by gluing thickened discs to a thickened $\Sigma$ (i.e. $\Sigma \times [0,1]$) along $\alpha \times \{0\}$ and $\beta \times \{1\}$. The sutures are given by $\Gamma = \partial \Sigma \times \{1/2\}$; the region $R_+$ consists of $\Sigma \times \{1\}$, surgered along $\beta \times \{1\}$, together with $\partial \Sigma \times (1/2,1)$; the region $R_-$ consists of $\Sigma \times \{0\}$, surgered along $\alpha \times \{0\}$, together with $\partial \Sigma \times (0,1/2)$.

The second step is, given the Heegaard diagram $(\Sigma, \alpha, \beta)$ for $(M, \Gamma)$, to consider pseudoholomorphic curves in $\Sigma \times [0,1] \times \R$. We consider such curves with boundary conditions $\alpha \times \{0\} \times \R$ and $\beta \times \{1\} \times \R$. At $\pm \infty$ in the $\R$ coordinate, such a curve approaches a point in $\alpha \cap \beta$. In particular we consider such curves which at $\pm \infty$ approach complete intersections of the $\alpha_i \cap \beta_j$. Setting up appropriate almost complex and symplectic structures, one can define moduli spaces of such holomorphic curves. These moduli spaces are finite-dimensional and their dimension is given by the index of the corresponding Fredholm problem.

The third step is to define a chain complex generated by the asymptotic conditions of these holomorphic curves, that is, generated by complete intersections of the $\alpha_i \cap \beta_j$. The differential in this chain complex is defined by counts of rigid holomorphic curves which have prescribed complete intersections as asymptotes at $\pm \infty$. The philosophy of this Floer chain complex is that it can be regarded as an infinite-dimensional version of a Morse complex.

One can show that the homology of this complex only depends on the original sutured manifold $(M, \Gamma)$, and does not depend on any of the other choices (Heegaard diagram, almost complex structure, etc.) made along the way. This homology is denoted $SFH(M, \Gamma)$ and is called \emph{sutured Floer homology}. 

\subsection{The spin-c grading}

Sutured Floer homology $SFH(M, \Gamma)$ is an abelian group which naturally has two gradings: the \emph{spin-c} grading and the \emph{Maslov} grading. For present purposes we only need the spin-c grading. 

A \emph{spin-c structure} on a 3-manifold $M$ is a homology class of nonvanishing vector fields; two nonvanishing vector fields are homologous is they are homotopic through nonvanishing vector fields in the complement of a 3-ball in $M$ \cite{OS04Closed, Turaev97_torsion}. A spin-c structure on a \emph{sutured} 3-manifold $(M, \Gamma)$ is a homology class of nonvanishing vector fields whose restriction to $\partial M$ is a fixed vector field adapted to the sutures $\Gamma$ and the regions $R_+, R_-$ \cite{Ju06}.

For present purposes, however, the precise definition of a spin-c structure is not so important. What is important is that each generator of the Floer chain complex has an associated spin-c structure, and the differential preserves it. Thus 
\[
SFH(M, \Gamma) = \bigoplus_{\s \in \Spin^c (M, \Gamma)} SFH(M, \Gamma, \s)
\]
where $\Spin^c (M, \Gamma)$ is the set of spin-c structures on $M$, and $SFH(M, \Gamma, \s)$ is the homology of the subcomplex with spin-c structure $\s$. 

Spin-c structures on $(M, \Gamma)$ are in bijective correspondence with $H^2 (M, \partial M) \cong H_1 (M)$ (singular homology with $\Z$ coefficients); indeed, spin-c structures form an affine set over this group. Note that a priori, since $H_1 (M)$, and hence $\Spin^c (M, \Gamma)$ may in general be infinite, there may be infinitely many summands in the above direct sum. However, the chain complex is finitely generated (there are only finitely many intersection points of $\alpha \cap \beta$), so only finitely many of the summands are nonzero. 

Thus, for any balanced sutured manifold, $SFH(M, \Gamma)$ is a finitely generated group, which is graded by $\Spin^c (M, \Gamma)$, an affine set over $H_1 (M)$.

\subsection{From bipartite graph to sutured Floer homology}
\label{sec:bipartite_graph_SFH}

We saw in section \ref{sec:graph_to_sutured_manifold} how to construct a sutured 3-manifold $(M_G, L_G)$ from a bipartite plane graph $G$. The positive and negative regions $R_+, R_-$ of this sutured manifold are both  homeomorphic to the Seifert surface $F_G$, so $\chi(R_+) = \chi(R_-)$. Thus $(M_G, L_G)$ is balanced. In fact, the general construction of splitting $S^3$ along the Seifert surface $F$ of an oriented link $L$ also yields a balanced sutured 3-manifold $S^3 (F)$.

We may thus consider the sutured Floer homology $SFH(M_G, L_G)$, and more generally, we can consider $SFH(S^3(F))$ for any Seifert surface $F$ of an oriented link in $S^3$.

In \cite{FJR11}, Friedl--Juh\'{a}sz--Rasmussen define a \emph{sutured L-space} to be a balanced sutured 3-manifold $(M, \Gamma)$ such that $SFH(M, \Gamma)$ is torsion free and supported in a single Maslov grading. They showed \cite[cor. 1.7]{FJR11} that for a sutured L-space $(M, \Gamma)$, every spin-c summand $SFH(M, \Gamma, \s)$ is isomorphic to $\Z$ or is trivial. Thus, to know $SFH(M, \Gamma)$, it is sufficient to know the set of spin-c structures $\s$ for which the summand $SFH(M, \Gamma, \s)$ is nontrivial; this set is called the \emph{support} of $SFH(M, \Gamma)$ and is denoted $\Supp (M, \Gamma)$. Thus
\[
\Supp (M, \Gamma) = \left\{ \s \in \Spin^c (M, \Gamma) \; \mid \; SFH (M, \Gamma, \s) \neq 0 \right\}.
\]
Friedl--Juh\'{a}sz--Rasmussen showed \cite[cor. 6.11]{FJR11} that if $L$ is a non-split alternating oriented link in $S^3$, and $R$ is a minimal genus Seifert surface, then $S^3 (R)$ is a sutured L-space.

As discussed in section \ref{sec:median_construction}, when $G$ is a connected plane bipartite graph, the oriented link $L_G$ is non-split and alternating, and $F_G$ is a minimal genus Seifert surface. Thus the results of Friedl--Juh\'{a}sz--Rasmussen imply immediately that $(M_G, L_G)$ is a sutured L-space, and hence that $SFH(M_G, L_G)$ is determined by its support. For a spin-c structure $\s \in \Supp (M_G, L_G)$, we have $SFH(M, \Gamma, \s) = \Z$; for any other spin-c structure, $SFH(M_G, L_G, \s) = 0$. 

The support $\Supp (M_G, L_G)$ is a finite subset of $\Spin^c (M_G, L_G)$. And we have seen that $\Spin^c (M_G, L_G)$ is affine over $H_1 (M_G)$. Since $M_G$ is a handlebody of genus $b_1 (G)$, we have $H_1 (M_G) \cong \Z^{b_1 (G)}$. Hence the support can be regarded as a discrete integer lattice,
\[
\Supp (M_G, L_G) \subset \Spin^c (M_G, L_G) \cong \Z^{b_1(G)}.
\]
In turns out that $\Supp (M_G, L_G)$ in fact is the set of integer points of a \emph{polytope} --- a polytope that we have seen before.

\subsection{Sutured Floer homology and polytopes}

Juh\'{a}sz--K\'{a}lm\'{a}n--Rasmussen in \cite{Juhasz-Kalman-Rasmussen12}
described $\Supp (M_G, L_G)$ in terms of hypertree polytopes.

As in section \ref{sec:plane_bipartite_graphs_trinities} we may realise $G$ as the red graph $G_R$ of a trinity $\Tr$, so that the vertex classes of $G$ are violet $V$ and emerald $E$, and the red vertices $R$ correspond to the complementary regions of $G$. 

As a connected graph, $G = G_R$ is homotopy equivalent to a wedge of $b_1 (G)$ circles, and the first Betti number $b_1 (G)$ is one less than the number of complementary regions of $G$. 
Thus $H_1 (M_G) \cong \Z^{|R|-1}$, and $\Supp (M_G, L_G)$ is a finite subset of the affine $\Z^{|R|-1}$ space $\Spin^c (M_G, L_G)$.

On the other hand, each of the six hypergraphs of $\Tr$ has associated polytopes; we focus on the two hypertree polytopes in $\R^R$, namely
\[
\S_{(V,R)} \quad \text{and} \quad \S_{(E,R)}.
\]
By theorem \ref{thm:polytope_reflections}, these two polytopes are related by a reflection in $\R^R$.

The result of Juh\'{a}sz--K\'{a}lm\'{a}n--Rasmussen can be stated as follows.
\begin{thm}[\cite{Juhasz-Kalman-Rasmussen12} thm. 1.1]
\label{thm:SFH_support_polytope}
Let $G$ be a plane bipartite graph with vertex classes $V,E$ and complementary regions $R$. Then
\[
\Supp (M_G, L_G) \cong S_{(E,R)} \cong -S_{(V,R)}.
\]
\end{thm}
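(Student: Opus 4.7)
The plan is to prove the first isomorphism $\Supp(M_G, L_G) \cong S_{(E,R)}$; the second isomorphism $S_{(E,R)} \cong -S_{(V,R)}$ is then immediate from Theorem \ref{thm:polytope_reflections}. I would proceed by constructing a particularly nice Heegaard diagram for $(M_G, L_G)$, identifying its generators with Tutte matchings of the associated trinity $\Tr$, and checking that the induced map to $\Spin^c(M_G, L_G)$ realizes the hypertree correspondence established by K\'{a}lm\'{a}n.

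First I would build a sutured Heegaard diagram $(\Sigma, \alpha, \beta)$ for $(M_G, L_G)$ directly from the trinity picture. The natural candidate for the Heegaard surface is (a subsurface of) $\partial N(G)$, after pushing a small bit of the Seifert surface $F_G$ into the interior. The black and white triangles of $\Tr$ give natural regions of $\Sigma \setminus (\alpha \cup \beta)$: I would take the $\alpha$-curves to encircle the emerald vertices (one per non-root vertex of $E$, together with one circle per non-root $R$ vertex for the $\beta$-curves, or vice versa --- the exact split between $\alpha$ and $\beta$ dictates which vertex class ``survives'' in the diagram). The balanced condition $\chi(R_+) = \chi(R_-)$ dictates the correct numerology so that $|\alpha| = |\beta|$, and the resulting genus of $\Sigma$ matches $b_1(G)$.

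Second, I would identify intersection points and generators combinatorially. Each crossing of an $\alpha_i$ with a $\beta_j$ lies in a well-defined triangle of $\Tr$, so intersection points are naturally labeled by (vertex, triangle)-incidences. A complete intersection $x \in \mathbb{T}_\alpha \cap \mathbb{T}_\beta$ is then a system of incidences matching all non-root vertices (of the relevant classes) bijectively to non-outer white triangles --- that is, a Tutte matching. Thus the generators of $SFC(\Sigma, \alpha, \beta)$ are indexed by $\M_\Tr$, which by Berman's theorem \ref{thm:Berman_determinant} and K\'{a}lm\'{a}n's theorem \ref{thm:Kalman_hypertree_arborescence} has cardinality the magic number $|S_{(E,R)}|$.

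Third --- and this is the hardest step --- I would compute the spin-c grading map $\s \colon \M_\Tr \to \Spin^c(M_G, L_G)$ and identify its image with $S_{(E,R)} \subset \Z^{|R|-1}$. The idea is that $\Spin^c(M_G, L_G)$ is an affine $H_1(M_G) \cong \Z^{|R|-1}$-space, and the difference $\s(x) - \s(y)$ is computed by a standard ``connecting $\alpha$-$\beta$ paths'' recipe from Heegaard Floer theory. Via Corollary \ref{cor:hypertree_arborescence_bijection}, each Tutte matching corresponds (through Tutte's proof of theorem \ref{thm:Tutte_tree_trinity_thm}) to a spanning arborescence of $G_R^*$, whose dual spanning tree of $G_R$ carries a hypertree $f_{\overline{\TT}} \in \Z^R$ of $(E,R)$. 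The combinatorial content of the step is to verify that the homological difference $\s(x) - \s(y)$ agrees with the vector difference of the corresponding hypertrees under a natural identification of $H_1(M_G)$ with the hyperplane $\{\sum a_r = |V| - 1\} \subset \R^R$ (or rather, its associated lattice). This is essentially a ``bookkeeping'' calculation on the Heegaard diagram: a Whitney disc representing the connecting class pushes across white triangles whose red corners record the change in the $R$-coordinate of the hypertree.

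Finally, I would conclude using the $L$-space property. The construction of $F_G$ in section \ref{sec:median_construction} shows $L_G$ is a non-split alternating link with $F_G$ a minimal-genus Seifert surface, so by Friedl--Juh\'{a}sz--Rasmussen $(M_G, L_G)$ is a sutured $L$-space and every nonzero spin-c summand is $\Z$. Thus $\Supp(M_G, L_G)$ coincides with the image of $\s$, and by the grading computation above this image is precisely $S_{(E,R)}$. The principal obstacle is the spin-c computation in step three: making the Heegaard diagram sufficiently explicit that one can trace a Whitney disc connecting two generators and read off its homology class as a difference of hypertree vectors. All of the combinatorial ingredients (trinity structure, arborescence-hypertree bijection, polytope reflection) are already in place to finish once this identification is established.
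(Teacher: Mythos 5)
The paper does not give a proof of this theorem --- it is stated with a citation to Juh\'asz--K\'alm\'an--Rasmussen and no sketch is offered --- so there is no in-text argument to compare your proposal against. Assessed on its own terms, your outline invokes the right combinatorial ingredients (Tutte matchings, arborescences, Corollary \ref{cor:hypertree_arborescence_bijection}, the reflection of Theorem \ref{thm:polytope_reflections}), but it stops short of an argument at exactly the points where one would be needed. The Heegaard diagram is described only impressionistically: you do not actually specify $(\Sigma,\alpha,\beta)$ in terms of the trinity, verify that it presents $(M_G,L_G)$, or check the numerology of curves against the genus of the handlebody. The claim that complete intersections $\mathbb{T}_\alpha\cap\mathbb{T}_\beta$ are in bijection with $\M_\Tr$ is asserted rather than derived, and the $\Spin^c$-grading computation, which you correctly identify as the crux, is given only a heuristic (``a Whitney disc pushes across white triangles whose red corners record the change''). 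These are not small gaps; together they are the whole proof.

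Two further remarks. First, your concluding appeal to the Friedl--Juh\'asz--Rasmussen $L$-space result would be redundant if the earlier steps went through: if generators are in bijection with Tutte matchings and the $\Spin^c$ map sends them injectively onto the hypertrees (via Corollary \ref{cor:hypertree_arborescence_bijection}), then each $\Spin^c$ class carries exactly one generator, the differential vanishes for grading reasons, and the $L$-space property comes out as a corollary rather than an input. Second, my understanding is that the cited Juh\'asz--K\'alm\'an--Rasmussen argument does not construct a bare-hands Heegaard diagram with Tutte-matching generators; rather it uses the $L$-space property to reduce $SFH$ to its $\Spin^c$-graded Euler characteristic (a Turaev-type torsion) and then matches that invariant with the combinatorics of the root polytope and its triangulations. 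So your route, even if it could be completed, would be a genuinely different and more explicitly chain-level approach than the one in the source; but as written it is a blueprint for a proof, not a proof.
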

The ``$\cong$" symbols in the statement will be explained shortly. Essentially they denote equivalence as affine sets.
As noted in section \ref{sec:hypertrees_arborescences}, the fact that $\S_{(E,R)}, \S_{(V,R)}$ are related by a reflection means that $\S_{(E,R)}$ and $-\S_{(V,R)}$ are related by a translation in $\R^R$.

Note that while $S_{(E,R)} \subset \Z^R$, all hypertrees of $(E,R)$ actually lie in an $(|R|-1)$-dimensional affine subspace of $\Z^R$. Recall (definition \ref{def:hypertree}) that a hypertree $f_\TT$ of $(E,R)$ is given by $\deg_R \TT - (1, \ldots, 1) \in \Z^R$, where $\TT$ is a spanning tree of $(E,R)$. Thus the sum of the coordinates of $f_\TT$ is $\sum_{r \in R} \deg_r \TT - |R|$, which is $|R|$ less than the number of edges of $\TT$. But all spanning trees of a graph have the same number of edges, and hence for any hypertree $f_\TT \in \Z^R$, the sum of the coordinates of $f_\TT$ is a constant. The set of points in $\Z^R$ with coordinates summing to this constant is thus a codimension-1 affine subspace containing all the hypertrees of $(E,R)$. Similarly, all hypertrees of $(V,R)$ lie in a (generally distinct) $(|R|-1)$-dimensional affine subspace of $\R^R$.

Hence, all three objects $\Supp (M_G, L_G)$, $S_{(E,R)}$ and $-S_{(V,R)}$ can be regarded as lying in an affine spaces over $\Z^{|R|-1}$. The $\cong$ symbols in theorem \ref{thm:SFH_support_polytope}  mean that there are affine identifications of these spaces yielding bijections between $\Supp (M_G, L_G)$, $S_{(E,R)}$ and $-S_{(V,R)}$.

If we have a trinity $\Tr$, with its three bipartite graphs $G_V, G_E, G_R$ and three sutured manifolds $(M_{G_V}, L_{G_V})$, $(M_{G_E}, L_{G_E})$, $(M_{G_R}, L_{G_R})$, then the it follows that the three support sets
\[
\Supp (M_{G_V}, L_{G_V}), \quad
\Supp (M_{G_E}, L_{G_E}), \quad
\Supp (M_{G_R}, L_{G_R})
\]
have the same size, equal to the magic number of $\Tr$ --- despite lying in distinct spaces, in general of different dimensions. Indeed, theorem \ref{thm:SFH_support_polytope} says that the three support sets are related in the same way as the hypertree polytopes of $\Tr$. So the $SFH$ groups of these sutured manifolds satisfy analogous duality and triality relations --- and can be regarded as a ``categorified" version.

Since all three sutured manifolds are sutured L-spaces, for each manifold the size of the support is equal to the dimension of $SFH$. We deduce that the three sutured manifolds have $SFH$ free abelian of dimension given by the magic number of $\Tr$.

The idea viewing the support of $SFH$ as a polytope in $\Spin^c (M, \Gamma)$ is fruitful more generally. For instance, Juh\'{a}sz has shown that if we decompose a sutured manifold $(M, \Gamma)$ along a (sufficiently nice) surface into another sutured 3-manifold $(M', \Gamma')$, then the effect on $SFH$ is to restrict the support to an affine subspace of $\Spin^c (M, \Gamma)$. See \cite{Ju08, Juhasz10_polytope}.

\section{Contact structures}
\label{sec:contact}

Having discussed a wide range of results, we now turn to our recent work with K\'{a}lm\'{a}n \cite{Kalman-Mathews_Trinities}, which introduces \emph{contact structures} into the story.

\subsection{Contact topology}
\label{sec:contact_topology}

We give a very brief introduction to contact geometry and topology, and refer to \cite{Geiges_Introduction} for general background.

A \emph{contact structure} $\xi$ on a 3-manifold $M$ is a non-integrable 2-plane distribution on $M$; the pair $(M, \xi)$ forms a \emph{contact 3-manifold}. Such a $\xi$ consists of a choice of 2-plane in the tangent space of $M$ at each point; the non-integrability of $\xi$ means that there is no surface in $M$ whose tangent planes agree with $\xi$. Locally a contact structure can always be written as the kernel of a 1-form $\alpha$, and the non-integrability of $\xi$ is equivalent to the non-vanishing of the 3-form $\alpha \wedge d\alpha$, i.e. that this 3-form is a volume form. The \emph{standard} contact structure on $\R^3$ is given as the kernel of $dz - y \; dx$, illustrated in figure \ref{fig:standard_contact_structure}.

\begin{figure}
\begin{center}
\includegraphics{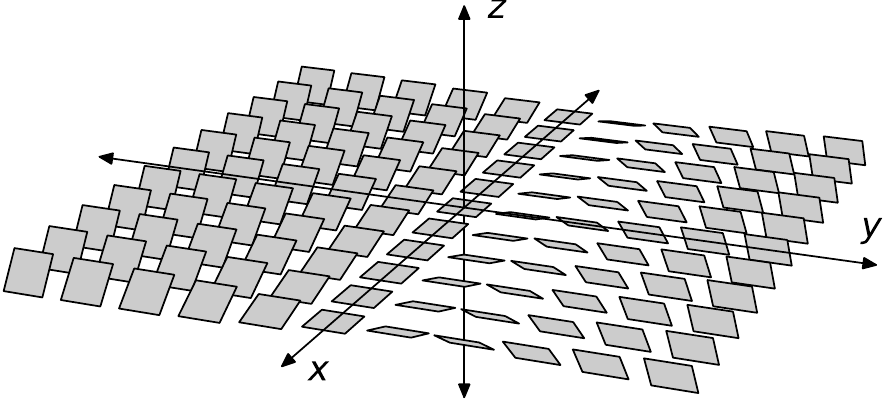}
\end{center}
\caption{The standard contact structure on $\R^3$. The contact planes are shown along $z=0$; translating these planes in the $z$-direction gives the full contact structure.}
\label{fig:standard_contact_structure}
\end{figure}

Contact structures exist in any \emph{odd} number of dimensions (as a \emph{maximally} non-integrable codimension-1 plane field), and form the odd-dimensional counterpart of symplectic geometry. The field goes back at least to Lie's work on differential equations, and arguably to Christiaan Huygens \cite{Geiges01, Geiges05}.

The non-integrability of contact structures might suggest that they are intractable, but in fact they possess a great deal of structure and symmetry. For instance, any contact structure $\xi$ has an infinite-dimensional space of \emph{contact vector fields}, that is, vector fields whose flow preserves $\xi$. Contact vector fields are infinitesimal symmetries of $\xi$, so the space of symmetries is enormous.

Given any 3-manifold $M$, one might ask: what are the contact structures on $M$? Martinet in 1971 showed that every 3-manifold has a contact structure \cite{Martinet71}, and since non-integrability is an open condition, every 3-manifold in fact has infinitely many distinct contact structures. A more reasonable question is to ask how many \emph{isotopy classes} of contact structures there are on $M$, where two contact structures on $M$ are \emph{isotopic} if they are related by a homotopy of 2-plane fields through contact structures.

Eliashberg in \cite{ElOT} illuminated the crucial distinction between two types of contact structures: \emph{tight} and \emph{overtwisted}. A contact structure is overtwisted if it contains a specific contact submanifold called an \emph{overtwisted disc}; otherwise it is tight. Eliashberg showed that the space of overtwisted contact structures on $M$ is weakly homotopy equivalent to the space of 2-plane distributions on $M$. Thus, the classification of overtwisted contact structures is reduced to a problem in homotopy theory, and can be understood by obstruction-theoretic methods (see e.g. \cite{Geiges_Introduction}).

The tight contact structures on a 3-manifold $M$ are much more subtle, and carry interesting information about its topology. Colin--Giroux--Honda showed that the number of isotopy classes of tight contact structures on a closed oriented irreducible 3-manifold $M$ is finite if and only if $M$ is atoroidal \cite{Colin_Giroux_Honda09}.

In general it is a difficult problem to classify all the tight contact structures on a 3-manifold. Results are known for various classes of 3-manifolds, such as lens spaces, and certain types of bundles (e.g. \cite{Gi00, GiBundles, Hon00I, Hon00II}). There exists a closed 3-manifold with no tight contact structure \cite{EH_nonexistence}. Part of the present work is a classification of tight contact structures on the manifolds $(M_G, L_G)$, as we will see shortly.

One useful technique to classify contact structures is to use the notion of \emph{convex surface} introduced by Giroux \cite{Gi91}. A surface $S$ in $(M, \xi)$ is \emph{convex} if there is a contact vector field $X$ transverse to $S$. This definition is not important for present purposes; more important are the following two facts about convex surfaces. Firstly, convex surfaces are \emph{generic}; any embedded surface in a contact 3-manifold is $C^\infty$ close to a convex surface. Secondly, a convex surface $S$ naturally has the structure of a \emph{sutured surface} (section \ref{sec:sutured_manifolds}): there is a natural set of sutures $\Gamma$ on $S$, dividing the surface into positive and negative regions $R_+, R_-$ in coherent fashion. The curve $\Gamma$ is called the \emph{dividing set} and consists of points on $S$ where $X \in \xi$; thinking of $S$ as ``horizontal" and $X$ as ``vertical", $\Gamma$ is the locus of points where the contact planes are ``vertical". If we imagine that the two sides of each contact plane are coloured black and white, then $R_+, R_-$ correspond to which side of the contact plane is visible ``from above". See figure \ref{fig:convex_disc}. In fact, the dividing set / set of sutures $\Gamma$ essentially determines the contact structure near $S$, up to isotopy, in a sense which can be made precise.

\begin{figure}
\begin{center}
\includegraphics{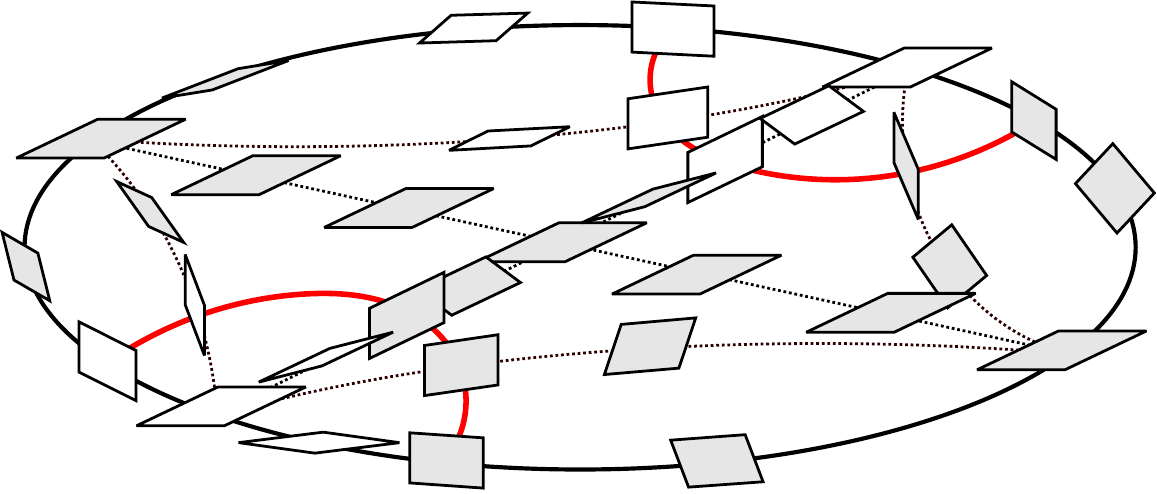}
\end{center}
\caption{A convex disc with contact structure. The dividing set / set of sutures is shown in red; the two sides of contact planes are coloured black and white.}
\label{fig:convex_disc}
\end{figure}

Since a set of sutures naturally describes a contact structure near a surface, a sutured 3-manifold $(M, \Gamma)$ has a naturally prescribed contact structure $\xi_\partial$ near its boundary. A \emph{contact structure} on $(M, \Gamma)$ is then a contact structure on $M$ which restricts to $\xi_\partial$ near $\partial M$. From this perspective, sets of sutures provide natural boundary conditions for contact structures.

\subsection{Contact invariants in sutured Floer homology}

Contact structures are closely related to sutured Floer homology. A contact structure $\xi$ on a balanced sutured 3-manifold $(M, \Gamma)$ yields a \emph{contact invariant} $c(\xi)$ in sutured Floer homology, well-defined up to sign \cite{HKM09, HKMContClass, OSContact}. Because of technical issues, there is an orientation reversal, so $c(\xi) \in SFH(-M, -\Gamma)$. When $\xi$ is overtwisted, $c(\xi) = 0$; so only tight contact structures have nontrivial contact invariants.

A contact structure $\xi$ on $(M, \Gamma)$ naturally has a spin-c structure $\s_\xi \in \Spin^c (M, \Gamma)$, given by the homology class of a vector field forming an orthogonal complement of $\xi$ (see e.g. \cite{OS04Closed, Turaev97_torsion}). Sutured Floer homology is also graded by spin-c structures, and the contact invariant $c(\xi)$ lies in the summand of $SFH(-M,-\Gamma)$ corresponding to the spin-c structure $\s_\xi$: that is, $c(\xi) \in SFH(-M, -\Gamma, \s_\xi)$ \cite{Juhasz16_functoriality}.

\subsection{Contact structures and trinities}

Let us then turn to the sutured 3-manifolds considered so far in our story, namely those of the form $(M_G, L_G)$. 
In the light of our discussion of contact structures, a natural question to to ask is: how many isotopy classes of tight contact structures are there on $(M_G, L_G)$?

More generally, we may consider a trinity $\Tr$, and the three bipartite plane graphs $G_V, G_E, G_R$ obtained from it. How many isotopy classes of tight contact structures are there on the three associated sutured manifolds?

As with all the magic in this subject, there is only one possible answer to this question.

\begin{thm}[\cite{Kalman-Mathews_Trinities}]
\label{thm:tight_ct_strs_trinities}
Let $G$ be a finite connected bipartite plane graph. The number of isotopy classes of tight contact structures on $(M_G, L_G)$ is equal to the number of hypertrees in either hypergraph of $G$.
\end{thm}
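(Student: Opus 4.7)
The plan is to combine convex surface theory on the handlebody $M_G$ with the sutured Floer machinery that has already been developed in the paper. Roughly, I aim to show ``$\le$'' via contact invariants in $SFH$, and ``$\ge$'' by an explicit construction of tight contact structures indexed by hypertrees.

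\textbf{Upper bound via sutured Floer homology.} By the discussion of section \ref{sec:bipartite_graph_SFH}, $(M_G, L_G) = S^3(F_G)$ is a sutured L-space: by Friedl--Juh\'{a}sz--Rasmussen every nonzero spin-c summand of $SFH(M_G, L_G)$ is $\Z$, and by theorem \ref{thm:SFH_support_polytope} the support has cardinality $|S_{(E,R)}| = |S_{(V,R)}|$, which by theorem \ref{thm:Kalman_hypertree_arborescence} equals the number of hypertrees in either hypergraph of $G$. The same applies to the orientation-reversed manifold. Any tight contact structure $\xi$ on $(M_G, L_G)$ produces a nonzero class $c(\xi)\in SFH(-M_G,-L_G,\s_\xi)$, and isotopic contact structures give identical (up to sign) invariants in the same spin-c summand. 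The first step is therefore to show that distinct isotopy classes of tight contact structures produce distinct contact invariants; since each spin-c summand is rank one, it suffices to show that within a single spin-c class there is at most one tight contact structure on $(M_G,L_G)$ up to isotopy. For this I would use convex decomposition: cut $M_G$ along a system $\D$ of compressing discs to a ball with sutures, make each disc convex, and argue via bypass uniqueness that the dividing set on each disc, together with $\s_\xi$, determines $\xi$ uniquely.

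\textbf{Lower bound via explicit construction.} The second step is to realise each lattice point of the hypertree polytope $S_{(E,R)}$ by a tight contact structure. The natural cut system: choose a spanning tree $\TT$ of the hypergraph (via corollary \ref{cor:hypertree_arborescence_bijection}, dual to an arborescence of $G^*$), and let $\D_\TT$ be the collection of compressing discs of $M_G$ dual to the edges of $G$ not in $\TT$; this is a complete cut system since $M_G$ is a handlebody of genus $b_1(G) = |E|-|V|+1$. Choosing a dividing set on each $D \in \D_\TT$ compatible with $L_G$ on the boundary, and with the appropriate parity prescribed by the hypertree, produces a contact structure via Honda's gluing theorem. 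I would then verify that the resulting structure is tight (no overtwisted disc arises) by appealing to the state traversal / bypass obstruction criterion, and that its spin-c class realises the prescribed hypertree under the identification in theorem \ref{thm:SFH_support_polytope}. Varying the dividing data over all allowable configurations gives at least one tight contact structure per hypertree.

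\textbf{Pulling it together.} Combining the two steps, the map $\xi \mapsto \s_\xi$ from isotopy classes of tight contact structures to the support $\Supp(M_G, L_G)$ is a bijection, and the count equals the number of hypertrees. I expect the main obstacle to be the tightness verification in the construction step: one needs to rule out overtwisted discs for every configuration of dividing sets arising from a hypertree, not just some. The cleanest route is probably to exhibit a contact-compatible open book or partial open book built from the Seifert surface $F_G$ (exploiting that $L_G$ is special alternating and $F_G$ is a fibre-like Seifert surface), so that tightness follows from the existence of a Stein filling; then the combinatorial parameters of the open book page can be matched bijectively with hypertrees. A subsidiary difficulty is the injectivity within a spin-c class, where I would need a bypass-rounding argument tailored to this handlebody rather than appealing to a generic classification.
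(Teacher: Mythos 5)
Your plan diverges from the paper's approach in its overall architecture, and a few of the key steps have genuine gaps.

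\textbf{The paper's actual route.} The proof is a direct combinatorial classification via convex surface theory. One cuts $M_G$ along convex compressing discs lying in the \emph{complementary regions} of $G$ (one for each non-outer region, i.e.\ each non-root red vertex), applies Honda's gluing theorem for tight contact structures to verify tightness, and reduces the classification to a combinatorial problem about dividing curves on those discs, which is then matched to the hypertree/arborescence combinatorics of \cite{Kalman13_Tutte} and \cite{Me09Paper}. Sutured Floer homology and the contact invariant do not appear in the proof of this theorem; the SFH statements (theorem \ref{thm:tight_ct_str_spin-c} and the fact that $c(\xi)$ generates the $\Z$-summand) are proved \emph{afterwards}, partly as consequences of the classification.

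\textbf{Gap 1: non-vanishing of the contact invariant is not automatic.} Your upper bound hinges on the claim that every tight contact structure on $(M_G,L_G)$ has $c(\xi)\neq 0$. This is false in general --- there are tight contact structures with vanishing Ozsv\'{a}th--Szab\'{o}/sutured contact invariant --- and in this paper it is in fact a theorem, proved after the classification using the TQFT structure of $SFH$ and the observation that each tight structure on $(M_G,L_G)$ extends to a tight (indeed standard) contact structure on $S^3$. Using it to prove the classification would be circular, or at the very least would require an independent argument you have not supplied. Moreover, even granting non-vanishing, you still must prove uniqueness of the tight structure within each spin-c class, which is essentially the entire classification; the SFH bound therefore buys you nothing beyond what the convex-surface argument already gives.

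\textbf{Gap 2: the proposed cut system does not consist of compressing discs.} You take discs ``dual to the edges of $G$ not in a spanning tree.'' A meridian disc of an edge of $G$ lies inside the band of $N(G)$, not in $M_G = S^3\setminus N(G)$; and the meridian curve of a non-bridge edge typically does not bound a disc in $M_G$ at all. The handlebody $M_G$ is naturally obtained by gluing two balls (the two sides of $S^2\setminus N(G)$) along the complementary-region discs; its compressing discs live in the complementary regions, not at the edges. Using the edge-indexed discs as a cut system would therefore not decompose $M_G$ into a ball, and the convex-decomposition argument would not get started. The correct cut system --- the complementary-region discs --- also has the right cardinality $|R|-1 = b_1(G)$, and it aligns exactly with the fact that the relevant hypertree polytopes $S_{(E,R)}, S_{(V,R)}$ live in $\R^R$: the dividing-curve combinatorics on those discs is what matches the hypertree data.

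\textbf{On tightness verification.} Your instinct that tightness is the hard step is correct. But the intended tool is Honda's gluing theorem for tight contact structures applied across the convex compressing discs, not construction of an open book or Stein filling. Both could plausibly work, but the gluing-theorem route is more direct here because the decomposition along complementary-region discs already produces pieces (balls with sutures) where tightness is elementary, and the combinatorial conditions on the dividing sets are precisely the ones that Honda's criterion converts into tightness of the glued structure. Pursuing an open book from $F_G$ is a legitimate but more involved project and would need a separate argument to match page data to hypertrees.

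In summary, the skeleton of ``convex surfaces plus Kalman's combinatorics'' is the right idea, but you should drop the $SFH$ upper bound (it is circular and doesn't save work), replace the edge-dual discs by complementary-region discs, and use Honda's gluing result for tightness.
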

Of course, this number of hypertrees is the magic number of the trinity $\Tr$ associated to $G$, and so in fact all three sutured manifolds have the same number of tight contact structures.

As discussed in sections \ref{sec:median_construction} and \ref{sec:graph_to_sutured_manifold}, any minimal genus Seifert surface $F$ of a non-split prime special alternating link arises from the median construction on some connected bipartite plane graph $G$, and we then have $(M_G, L_G) = S^3 (F)$. Hence we obtain the following.
\begin{cor}[\cite{Kalman-Mathews_Trinities}]
Let $F$ be a minimal genus Seifert surface for a non-split prime special alternating link in $S^3$. Then the number of isotopy classes of tight contact structures on $S^3 (F)$ is equal to the number of hypertrees in a hypergraph giving the median construction of $F$.
\end{cor}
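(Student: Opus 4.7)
The plan is to reduce the corollary directly to Theorem~\ref{thm:tight_ct_strs_trinities} via the converse of the median construction recalled in Section~\ref{sec:median_construction}. First, I would invoke the result of Banks~\cite{Banks11} and Hirasawa--Sakuma~\cite{Hirasawa-Sakuma96}: given a minimal genus Seifert surface $F$ for a non-split prime special alternating link $L \subset S^3$, there exists a connected bipartite plane graph $G$ such that, up to ambient isotopy, $L = L_G$ and $F = F_G$, where $F_G$ is the surface obtained from $G$ by the median construction. The hypergraphs $\HH$ and $\overline{\HH}$ of $G$ are precisely the hypergraphs to which the statement refers.

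Next I would identify the sutured manifold $S^3(F)$ with the sutured manifold $(M_G, L_G)$ from Section~\ref{sec:graph_to_sutured_manifold}. By definition $(M_G, L_G) = S^3(F_G)$, so the ambient isotopy carrying $F$ to $F_G$ induces a homeomorphism of sutured manifolds $S^3(F) \cong (M_G, L_G)$. Such a homeomorphism induces a bijection between the isotopy classes of tight contact structures on $S^3(F)$ and those on $(M_G, L_G)$ (tightness and isotopy are both invariants of the contact structure up to contactomorphism).

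Finally, Theorem~\ref{thm:tight_ct_strs_trinities} applied to $G$ asserts that the number of isotopy classes of tight contact structures on $(M_G, L_G)$ equals $|S_\HH| = |S_{\overline{\HH}}|$, the number of hypertrees in either hypergraph of $G$. Transporting this equality across the identification of the previous step yields the corollary.

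The only potential obstacle is the passage from ``$F$ and $F_G$ are isotopic Seifert surfaces'' to ``$S^3(F)$ and $(M_G,L_G)$ are homeomorphic as sutured manifolds''; but an ambient isotopy of $S^3$ taking $F$ to $F_G$ restricts to a homeomorphism of closures of complements that respects the two positive/negative sides of the Seifert surface and hence respects the sutured structure. Everything else is immediate from the cited results.
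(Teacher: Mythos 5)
Your proposal is correct and matches the paper's own reasoning: invoke the converse of the median construction (Banks, Hirasawa--Sakuma) to produce a connected bipartite plane graph $G$ with $F$ isotopic to $F_G$, identify $S^3(F)$ with $(M_G, L_G) = S^3(F_G)$, and apply Theorem \ref{thm:tight_ct_strs_trinities}. The extra paragraph justifying that an ambient isotopy of Seifert surfaces yields an identification of sutured manifolds is a reasonable clarification, not a departure from the paper's argument.
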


The proof of theorem \ref{thm:tight_ct_strs_trinities} relies heavily on Giroux's theory of convex surfaces \cite{Gi91}, and a theorem of Honda on gluing tight contact structures \cite{Hon02}. Using these results, we reduce the problem of classifying isotopy classes of tight contact structures on $(M_G, L_G)$ to a combinatorial problem about curves on discs in the complementary regions of $G$. The proof applies results from \cite{Kalman13_Tutte} and \cite{Me09Paper}.

We may also consider the spin-c structures of the tight contact structures on the sutured 3-manifold $(M_G, L_G)$. These lie in $\Spin^c (M_G, L_G)$ --- the same space in which $\Supp (M_G, L_G)$ lies. They are in fact the same set.

Indeed, an extension of the reasoning in the proof of theorem \ref{thm:tight_ct_strs_trinities}, together with results of Juh\'{a}sz--K\'{a}lm\'{a}n--Rasmussen \cite{Juhasz-Kalman-Rasmussen12}, gives the following result.
\begin{thm}[\cite{Kalman-Mathews_Trinities}]
\label{thm:tight_ct_str_spin-c}
Each isotopy class of tight contact structure on $(M_G, L_G)$ has a distinct spin-c structure. For a given $\s \in \Spin^c (M_G, L_G)$, a tight contact structure $\xi_\s$ with spin-c structure $\s$ exists on $(M_G, L_G)$ if and only if $SFH(-M_G, -L_G, \s)$ is nontrivial. 
\end{thm}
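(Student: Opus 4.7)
The plan is to reduce the theorem to a cardinality comparison, combining Theorems \ref{thm:tight_ct_strs_trinities} and \ref{thm:SFH_support_polytope}. By the former, the number of isotopy classes of tight contact structures on $(M_G, L_G)$ equals the magic number of the trinity $\Tr$ containing $G=G_R$; by the latter (applied equally to either orientation), $|\Supp(-M_G, -L_G)|$ also equals this same magic number. Hence it suffices to produce an injective map $\Phi$ from isotopy classes of tight contact structures into $\Supp(-M_G, -L_G)$: equality of cardinalities will then force $\Phi$ to be a bijection, yielding both the injectivity on spin-c structures and the existence criterion in one stroke.

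First I would define $\Phi(\xi) = \s_\xi$, the canonical spin-c structure determined by $\xi$. To see that the image lies in $\Supp(-M_G, -L_G)$, I would invoke the contact invariant: by construction $c(\xi) \in SFH(-M_G, -L_G, \s_\xi)$, so it is enough to verify $c(\xi) \neq 0$ for every tight $\xi$ on $(M_G, L_G)$. I expect the tight contact structures furnished by the proof of Theorem \ref{thm:tight_ct_strs_trinities} to be universally tight, built from explicit choices of dividing curves on a system of compressing discs in the handlebody $M_G$; in this product-like setting nonvanishing of the contact class should follow from standard bypass and gluing arguments of Honda--Kazez--Mati\'{c}.

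Next I would establish injectivity of $\Phi$. The proof of Theorem \ref{thm:tight_ct_strs_trinities} parameterises tight contact structures by combinatorial data on the complementary regions of $G$, data which is in turn identified with hypertrees of one of the hypergraphs of $\Tr$. The key step is to compute $\s_\xi$ directly from this data: fixing a reference tight structure and a transverse vector field, relative spin-c structures can be read off from how the dividing arcs on each compressing disc partition its boundary between $R_+$ and $R_-$, which translates into the degree vector of the associated spanning tree. I would then show this assignment coincides, under the bijection of \cite{Juhasz-Kalman-Rasmussen12} between hypertrees and $\Supp(M_G, L_G)$, with the spin-c labelling used in Theorem \ref{thm:SFH_support_polytope}. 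Since that bijection is injective on hypertrees, $\Phi$ is injective and its image is exactly $\Supp(-M_G, -L_G)$, proving both parts of the theorem.

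The main obstacle will be precisely this identification of spin-c labels. The contact-geometric recipe for $\s_\xi$ (via a vector field orthogonal to $\xi$, read off from dividing sets on convex compressing discs) and the sutured Floer recipe (via an intersection-point generator in an appropriate Heegaard diagram built from $F_G$) live in quite different formalisms, and tracking orientation conventions and affine identifications of $\Spin^c(M_G, L_G)$ between the two is delicate. Producing a Heegaard diagram whose $\alpha$- and $\beta$-curves are geometrically compatible with the convex decomposition used to enumerate tight contact structures, so that the two spin-c calculations can be compared region by region, is the principal technical hurdle to overcome.
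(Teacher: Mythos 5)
Your core strategy --- parameterising tight contact structures combinatorially via dividing curves on compressing discs, computing $\s_\xi$ from that data, and matching the result against the Juh\'{a}sz--K\'{a}lm\'{a}n--Rasmussen labelling of $\Supp(M_G,L_G)$ by hypertrees --- is exactly the route the paper takes. The paper describes its proof as ``an extension of the reasoning in the proof of theorem \ref{thm:tight_ct_strs_trinities}, together with results of Juh\'{a}sz--K\'{a}lm\'{a}n--Rasmussen'', and you have correctly identified that the delicate part is reconciling the contact-geometric and Floer-theoretic descriptions of the spin-c grading.

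That said, the scaffolding you build around this core is both redundant and raises a circularity concern. First, once you have shown that the assignment $\xi \mapsto \s_\xi$, read through the combinatorial classification, coincides with the JKR bijection from hypertrees to $\Supp$, you automatically have a bijection $\Tight(M_G,L_G) \to \Supp(-M_G,-L_G)$; the cardinality comparison via the magic number is then doing no work, and neither is the nonvanishing of $c(\xi)$ (which you introduce only to establish $\s_\xi \in \Supp$, a fact the matching already gives you for free). Second, and more seriously: the paper treats nonvanishing of the contact class as a \emph{consequence} of the present theorem --- the statement that $c(\xi_\s)$ generates $SFH(-M_G,-L_G,\s) \cong \Z$ is a separate, subsequent theorem whose proof uses the TQFT properties of $SFH$ and the fact that each $\xi_\s$ extends to the tight structure on $S^3$. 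If you invoke $c(\xi) \neq 0$ as a lemma here, you must supply an independent proof; ``standard bypass and gluing arguments'' is optimistic, since the genuinely standard route to nonvanishing in this setting \emph{is} the extension-to-$S^3$ argument, i.e.\ essentially the same TQFT argument. The cleaner and logically safer path is to drop the contact-class input and the cardinality shortcut entirely and prove the full matching directly, as the paper does.
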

Denoting the set of spin-c structures of tight contact structures on $(M_G, L_G)$ by $\Spin^c\Tight(M_G,L_G)$, we then have
\[
\Spin^c\Tight(M_G, L_G) = \Supp (M_G, L_G).
\]
By theorem \ref{thm:SFH_support_polytope} then $\Spin^c \Tight (M_G, L_G)$ is, up to affine equivalence, the set of hypertrees of the two related hypergraphs, and inherits all their dualities.

Thus for the three sutured manifolds of a trinity, the spin-c structures of their tight contact structures obey all the duality and triality relationships of the corresponding hypertree polytopes --- and their number is the magic number.

We can also compute the contact invariants of the tight contact structures on $(M_G, L_G)$. As mentioned in section \ref{sec:contact_topology}, if a contact structure $\xi$ has spin-c structure $\s$, then its contact structure $c(\xi)$ lies in the corresponding spin-c summand $SFH(-M,-\Gamma,\s)$. By theorem \ref{thm:tight_ct_str_spin-c}, if $\xi$ is tight then this summand is nonzero; since $(M_G, L_G)$ is a sutured L-space, as discussed in section \ref{sec:bipartite_graph_SFH} we have $SFH(-M,-\Gamma,\s) \cong \Z$. The answer for $c(\xi)$ turns out to be the nicest possible one.
\begin{thm}[\cite{Kalman-Mathews_Trinities}]
For the tight contact structure $\xi_\s$ on $(M_G, L_G)$ with spin-c structure $\s$, the contact invariant $c(\xi)$ is a generator of $SFH(-M_G, -L_G, \s) \cong \Z$.
\end{thm}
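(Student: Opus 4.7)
The plan is to represent $c(\xi_\s)$ by a single intersection tuple in a sutured Heegaard diagram whose associated chain complex has rank exactly one in each spin-c summand lying in $\Supp(-M_G,-L_G)$, so that being nonzero automatically forces being a generator.

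First, I would invoke the Honda--Kazez--Matić construction: the tight contact structure $\xi_\s$ on the balanced sutured $3$-manifold $(M_G, L_G)$ admits a partial open book decomposition, which produces a sutured Heegaard diagram $(\Sigma, \alpha, \beta)$ for $-(M_G, L_G)$ together with a distinguished tuple of intersection points $\x_\xi \in \T_\alpha \cap \T_\beta$. The class $[\x_\xi]$ equals $c(\xi_\s)$ and lies in spin-c grading $\s$. The partial open book I would use is the one suggested by the convex decomposition of $(M_G, L_G)$ employed in the proof of Theorem \ref{thm:tight_ct_strs_trinities}: cut $(M_G, L_G)$ along properly embedded convex discs projecting to the red vertices of the trinity, realising $\xi_\s$ via a prescribed dividing set on each disc.

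Second, I would exploit the fact, essentially present in \cite{Juhasz-Kalman-Rasmussen12}, that $(M_G, L_G)$ admits a sutured Heegaard diagram whose Floer generators are in natural bijection with the hypertrees of one of the associated hypergraphs, with distinct generators receiving distinct spin-c structures. Arranging the partial open book of the previous step to yield precisely such a diagram (by judicious isotopy and stabilization/destabilization) gives a chain complex whose restriction to each spin-c grading $\s \in \Supp(-M_G, -L_G)$ has rank exactly one. All differentials therefore vanish for grading reasons, so the homology equals the chain complex; in particular, the single generator in grading $\s$ represents a generator of $SFH(-M_G, -L_G, \s) \cong \Z$.

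Third, I would verify that the HKM distinguished tuple $\x_\xi$ coincides, after the manipulations above, with the unique chain generator $\x_\s$ in spin-c grading $\s$. By Theorem \ref{thm:tight_ct_str_spin-c} the assignment $\xi_\s \leftrightarrow \s$ is a bijection between tight contact structures on $(M_G, L_G)$ and $\Supp(-M_G, -L_G)$, matching the spin-c labelling of generators in the JKR-style diagram. Uniqueness of the generator in each grading then forces $\x_\xi = \x_\s$, so $c(\xi_\s)$ generates $SFH(-M_G, -L_G, \s)$.

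The main obstacle will be the matching in the last two paragraphs: translating the convex-decomposition data characterising $\xi_\s$ in Theorem \ref{thm:tight_ct_strs_trinities} into partial-open-book data, and then into Heegaard data that can be compared with the JKR diagram built from spanning trees. The bookkeeping is intricate, because one must ensure that the partial open book is stabilized precisely so that its resulting diagram is minimal in each spin-c grading and so that the tuple $\x_\xi$ is the unique generator in its grading. Once these identifications are in place, the fact that $c(\xi_\s)$ is a generator is immediate.
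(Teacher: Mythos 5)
Your route is genuinely different from the paper's, and it contains a gap that I don't see how to close. The paper's actual argument bypasses Heegaard diagrams entirely: it uses the TQFT gluing maps on $SFH$ from Honda--Kazez--Mati\'c \cite{HKM08} together with the observation that every tight contact structure on $(M_G, L_G)$ extends to a tight contact structure on $S^3$. Naturality of the contact class under the gluing map sends $c(\xi_\s)$ to the contact class of the standard tight contact structure on $S^3$, which is known to generate the corresponding copy of $\Z$. A homomorphism $\Z \to \Z$ that sends $c(\xi_\s)$ to a generator forces $c(\xi_\s)$ itself to be a generator, and we are done --- with no control over Heegaard diagrams required.

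The gap in your proposal is exactly the ``main obstacle'' you flag in your last paragraph, but it is more serious than bookkeeping. Knowing $SFH(-M_G,-L_G,\s)\cong\Z$ is not enough to conclude anything about $c(\xi_\s)$: if the chain group in grading $\s$ has rank larger than one, the homology class of the Honda--Kazez--Mati\'c distinguished tuple $\x_\xi$ could a priori be any integer multiple of the generator, including $0$. So your argument really does require a Heegaard diagram that \emph{simultaneously} (a) arises from a partial open book for $\xi_\s$, so that the contact class is represented by a single tuple $\x_\xi$, and (b) has exactly one generator in spin-c grading $\s$, so that the tuple must represent a homology generator. The HKM construction does not deliver such a diagram --- partial open book diagrams are typically far from minimal --- and there is no general mechanism for stabilizing/destabilizing a partial open book that both preserves the structure identifying the distinguished contact tuple and drives the diagram to one generator per spin-c grading. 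The JKR-style diagram you want to compare with is built from spanning trees and is not, as far as I know, produced by a partial open book. Without a concrete recipe for interpolating between these two diagrams while tracking $\x_\xi$, your identification $\x_\xi=\x_\s$ is an assertion rather than a proof. The TQFT/extension-to-$S^3$ argument sidesteps all of this by pushing the problem into $S^3$, where the answer is already known.
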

(Note that as $c(\xi)$ is ambiguous up to sign, there is no preferred generator of the $\Z$ summand.)

This theorem follows from the topological quantum field theory properties of sutured Floer homology \cite{HKM08}, and showing that each tight contact structure on $(M_G, L_G)$ extends to a tight contact structure on $S^3$.

The list of objects equal to the magic number is now quite enormous. Denoting the set of isotopy classes of tight contact structures on a sutured 3-manifold $(M, \Gamma)$ by $\Tight (M, \Gamma)$, we have seen that all of the following quantities are equal to the magic number:
\begin{align*}
|\det M_\Tr| &= \rho(G_V^*) = \rho(G_E^*) = \rho(G_R^*) = |\M_{\mathfrak{T}}| \\
&= | S_{(V,E)} | = | S_{(E,V)} | = | S_{(E,R)} | = | S_{(R,E)} | = | S_{(R,V)} | = | S_{(V,R)} | \\
&= | P_{(V,E}^- | = | P_{(E,V)}^- | = | P_{(E,R)}^- | = | P_{(R,E)}^- | = | P_{(R,V)}^- | = | P_{(V,R)}^- | \\
&= | \Supp (M_{G_V}, L_{G_V}) | = | \Supp (M_{G_E}, L_{G_E}) |
= | \Supp (M_{G_R}, L_{G_R} | \\
&= \dim SFH(M_{G_V}, L_{G_V}) = \dim SFH(M_{G_E}, L_{G_E}) = \dim SFH(M_{G_R}, L_{G_R}) \\
&= | \Tight (M_{G_V}, L_{G_V}) | = | \Tight (M_{G_E}, L_{G_E}) | = | \Tight (M_{G_R}, L_{G_R}) | \\
&= | \Spin^c\Tight (M_{G_V}, L_{G_V}) | = | \Spin^c\Tight (M_{G_E}, L_{G_E})| = | \Spin^c\Tight (M_{G_R}, L_{G_R}) |.
\end{align*}

Moreover we have seen equalities, or at least affine equivalences, of many of these sets and the related polytopes, such as
\[
P_{(R,E)}^- = S_{(E,R)} \cong -S_{(V,R)} = -P_{(R,V)}^- \cong 
\Supp (M_{G_R}, L_{G_R}) = \Spin^c \Tight (M_{G_R}, L_{G_R}),
\]
and dualities and trialities between them.

However, this is still far from a list of objects given by the magic number. In \cite{Kalman-Mathews_Trinities} it is also shown that the number of states of a \emph{universe}, in the \emph{formal knot theory} of Kauffman \cite{Kauffman_FKT83}, is the magic number of a corresponding trinity. The leading coefficients of the Alexander polynomials of $L_{G_V}, L_{G_E}, L_{G_R}$ are also given by the magic number.

\addcontentsline{toc}{section}{References}

\small

\bibliography{danbib}
\bibliographystyle{amsplain}

\end{document}